\newcommand{\8}{\infty}
\newcommand{\Z}{\mathbb{Z}}
\newcommand{\id}{\mathbf{1}}
\newcommand{\tauud}{\underline{\tau}}
\newcommand{\xbf}{\mathbf{x}}
\newcommand{\ebf}{\mathbf{e}}
\newcommand{\fbf}{\mathbf{f}}
\newcommand{\Ebf}{\mathbf{E}}
\newcommand{\Fbf}{\mathbf{F}}
\newcommand{\ybf}{\mathbf{y}}
\newcommand{\Span}{\operatorname{span}}
\newcommand{\SO}{\operatorname{SO}}
\newcommand{\PSL}{\operatorname{PSL}}
\newcommand{\SL}{\operatorname{SL}}
\newcommand{\Sp}{\operatorname{Sp}}
\newcommand{\GL}{\operatorname{GL}}
\newcommand{\gtil}{\tilde{g}}
\newcommand{\htil}{\tilde{h}}
\newcommand{\ktil}{\tilde{k}}
\newcommand{\Htil}{\widetilde{H}}
\newcommand{\Ktil}{\widetilde{K}}
\newcommand{\GSpin}{\operatorname{GSpin}}
\newcommand{\D}{\mathbb{D}}
\newcommand{\HH}{\mathbb{H}}
\newcommand{\R}{\mathbb{R}}
\newcommand{\A}{\mathbb{A}}
\newcommand{\Q}{\mathbb{Q}}
\newcommand{\C}{\mathbb{C}}
\newcommand{\afrak}{\mathfrak{a}}
\newcommand{\bfrak}{\mathfrak{b}}
\newcommand{\dfrak}{\mathfrak{d}}
\newcommand{\p}{\mathfrak{p}}
\newcommand{\m}{\mathfrak{m}}
\newcommand{\q}{\mathfrak{q}}
\newcommand{\g}{\mathfrak{g}}
\newcommand{\cfrak}{\mathfrak{c}}
\newcommand{\fffrak}{\mathfrak{f}}
\newcommand{\kfrak}{\mathfrak{k}}
\newcommand{\pr}{\operatorname{pr}}
\newcommand{\cohom}{H}
\newcommand{\End}{\operatorname{End}}
\newcommand{\vol}{\operatorname{vol}}
\newcommand{\PD}{\operatorname{PD}}
\newcommand{\Hom}{\operatorname{Hom}}
\newcommand{\Tr}{\operatorname{tr}}
\newcommand{\ad}{\operatorname{ad}}
\newcommand{\Res}{\operatorname{Res}}
\newcommand{\N}{\operatorname{N}}
\newcommand{\sgn}{\operatorname{sgn}}
\newcommand{\Ocal}{\mathscr{O}}
\newcommand{\Scal}{\mathscr{S}}
\newcommand{\Fcal}{\mathscr{F}}
\newcommand{\Ucal}{\mathcal{U}}
\newcommand{\Kcal}{\mathcal{K}}
\newcommand{\Lcal}{\mathcal{L}}
\newcommand{\MMcal}{\mathcal{M}}
\newcommand{\Cl}{\textrm{Cl}}
\newcommand{\Pcal}{\mathcal{P}}
\newcommand{\Ical}{\mathcal{I}}
\newcommand{\Zcal}{\mathcal{Z}}
\newcommand{\Qcal}{\mathcal{Q}}
\newcommand{\Qcalb}{\overline{\mathcal{Q}}}
\newcommand{\re}{\operatorname{Re}}
\newcommand{\supp}{\operatorname{supp}}
\newcommand{\Mat}{\operatorname{Mat}}
\newcommand{\fin}{f}
\newcommand*{\transp}[2][-3mu]{\ensuremath{\mskip1mu\prescript{\smash{\mathrm t\mkern#1}}{}{\mathstrut#2}}}%
\newcommand*{\adjast}[2][-3mu]{\ensuremath{\mskip1mu\prescript{\smash{\ast \mkern#1}}{}{\mathstrut#2}}}%
\newcommand*{\adjhash}[2][-3mu]{\ensuremath{\mskip1mu\prescript{\smash{\# \mkern#1}}{}{\mathstrut#2}}}%
\newcommand{\hooklongrightarrow}{\lhook\joinrel\longrightarrow}
\newcommand\restr[2]{{% we make the whole thing an ordinary symbol
  \left.\kern-\nulldelimiterspace % automatically resize the bar with \right
  #1 % the function
  \vphantom{\big|} % pretend it's a little taller at normal size
  \right|_{#2} % this is the delimiter
  }}
\newtheoremstyle{mytheoremstyle} % name
    {2em}                    % Space above
    {2em}                    % Space below
    {\itshape}                   % Body font
    {}                           % Indent amount
    {\normalsize \bfseries \scshape}                   % Theorem head font
    {.}                          % Punctuation after theorem head
    {0,5em}                       % Space after theorem head
    {}  % Theorem head spec (can be left empty, meaning ‘normal’)
\theoremstyle{mytheoremstyle}
\newtheorem{thm}{Theorem}[section]
\newtheorem*{thm*}{Theorem}
\newtheorem{cor}{Corollary}[thm]
\newtheorem*{cor*}{Corollary}
\newtheorem{lem}[thm]{Lemma}
\newtheorem{prop}[thm]{Proposition}
\newtheorem*{que*}{Question}
\theoremstyle{remark}
\newtheorem{rmk}{Remark}[section]
\newtheorem*{ex}{Example}
\numberwithin{equation}{section}
\title{\large Diagonal restriction of Eisenstein series and Kudla-Millson theta lift}
\author{\normalsize Romain Branchereau}
\begin{document}
\maketitle
\abstract{ We consider the Kudla-Millson theta series associated to a quadratic space of signature $(N,N)$. By combining a `see-saw' argument with the Siegel-Weil formula, we show that its (regularized) integral along a torus attached to a totally real field of degree $N$ is the diagonal restriction of an Eisenstein series. It allows us to express the Fourier coefficients of the diagonal restriction as intersection numbers, which generalizes one of the results of Darmon-Pozzi-Vonk \cite{DPV} to totally real fields. 
}
{
  \tableofcontents
}

\section{Introduction}The starting point of this paper is a result of Darmon-Pozzi-Vonk, that relates the diagonal restriction of an Eisenstein series to intersection numbers of geodesics. Let $\psi$ be a finite order odd Hecke character on the narrow class group $\Cl(F)^+$ of a real quadratic field $F$ of discriminant $d_F$. To such a character one can associate an Eisenstein series $E(\tau_1,\tau_2,\psi)$ which is a Hilbert modular form of parallel weight one and level $\SL_2(\Ocal)$. Its diagonal restriction $E(\tau,\tau,\psi)$ is a modular form of weight two and level $\SL_2(\Z)$. As such, it is zero, since this is the only such form. Instead one can look at the $p$-stabilization $E^{(p)}(\tau_1,\tau_2,\psi)$ for some prime $p$. The diagonal restriction $E^{(p)}(\tau,\tau,\psi)$ is now a form of weight two and level $\Gamma_0(p)$. Moreover, it is non-zero when $p$ is split.

Suppose that $p$ is a split prime and let $Y_0(p)$ be the modular curve $\Gamma_0(p)\backslash \HH$. To a fractional ideal $\afrak$ in $F$ and a square root $r$ of $d_F$ modulo $p$ one can associate a closed geodesic $\Qcalb_{\afrak,r}$ in $Y_0(p)$. Let $\Qcalb(\psi)$ be the $1$-cycle defined by
\begin{align}
\Qcalb(\psi) \coloneqq \sum_{[\afrak] \in \Cl(F)^+} \psi(\afrak)(\Qcalb_{\afrak,r}+\Qcalb_{\afrak,-r}) \in \Zcal_1(Y_0(p)),
\end{align}
and let $\Qcalb(0,\8)$ be the image in $Y_0(p)$ of the geodesic joining $0$ to $\8$. The following identity is proved in \cite[Theorem.~A]{DPV}
\begin{align} \label{DPVresult}
    E^{(p)}\left (\tau,\tau,\psi \right ) = L^{(p)}(\psi,0)- 2\sum_{n=1}^\8 \bigl < \Qcalb(0,\8),T_n\Qcalb(\psi)\bigr >_{Y_0(p)} e^{2i\pi n \tau} 
\end{align}
where $L^{(p)}(\psi,0)=(1-\psi(\p))(1-\psi({\p^\sigma }))L(\psi,0)$ and $T_n$ is a Hecke operator defined by double cosets. In {\em loc. cit.} the equality \eqref{DPVresult} is proved by computing the intersection numbers and comparing them with the Fourier coefficients of $E^{(p)}(\tau,\tau,\psi)$.

The work of Kudla and Millson provides a very natural framework to prove \eqref{DPVresult}. The goal of this paper is to recover the result of Darmon-Pozzi-Vonk by Kudla-Millson theory and to generalize it to totally real fields. For this we define a torus $C \otimes \psi$ such that the integral of the Kudla-Millson theta series over this torus gives the diagonal restriction of the Eisenstein series.

\paragraph{Kudla-Millson theta lift.} The Kudla-Millson theta lift is a lift from the cohomology of locally symmetric spaces attached to orthogonal or unitary groups to modular forms. In our case we want to work with orthogonal groups of certain quadratic spaces of signature $(N,N)$.

Let $F$ be a totally real field of degree $N$ with ring of integers $\Ocal$. Let $X^0 = F^2$ be the $2$-dimensional quadratic $F$-space with the quadratic form $Q^0(\xbf,\ybf)=xy'+x'y$ where $\xbf=(x,x')$ and $\ybf=(y,y')$ are vectors in $F^2$. At a place $v$ of $\Q$ let $F_{\Q_v} \coloneqq F_\Q \otimes \Q_v$, where we write $F_\Q$ instead of $F$ to emphasize that we view $F$ as a $\Q$-algebra. We fix a $\Q$-basis of $F$ so that we identify $F_\Q$ with $\Q^N$. Let $X=F^2_\Q$ be the $2N$-dimensional quadratic $\Q$ space with the quadratic form $Q \coloneqq \Tr_{F \mid \Q} Q^0$. Let $H(\Q) \coloneqq \SO(F^2_\Q)$ be its orthogonal group. The real vector space $F^2_\R$ is of signature $(N,N)$. Let $H(\R)^+$ be the connected component of the identity of its real points $H(\R)=\SO(F^2_\R)$. Let $\varphi_\fin$ in $\Scal(F_{\A_\fin}^2)$ be a finite Schwartz function, which is $K_\fin$-invariant by the Weil representation for some open compact subgroup $K_\fin$ of $H(\A_\fin)$. We define the double coset space
\begin{align} \label{adelicspace}
M_K \coloneqq H(\Q) \backslash H(\A)/K,
\end{align}
where $K=K_\8K_\fin$, and $K_\8$ is a maximal connected compact subgroup of $ H(\R)^+$ that is isomorphic to $\SO(N)\times \SO(N)$. The space $M_K$ is a disjoint union of locally symmetric spaces of dimension $N^2$, in general non-compact. If $M_K$ is non compact let $\overline{M_K}$ be any compactification, for example the Borel-Serre compactification. On $M_K$ there is a natural family of {\em special cycles} of codimension $N$
\begin{align}
    C_n(\varphi_\fin) \in \Zcal_{N^2-N}(\overline{M_K},\partial \overline{M_K};\R)
\end{align} indexed by positive rational numbers $n$. In \cite{km2,km3,KMIHES}, Kudla and Millson used the work of Weil \cite{weil} on theta series to construct a closed differential form
\[\Theta_{KM}(\tau,\varphi_\fin) \in \Omega^N(M_K)\]
where $\tau$ is in $\HH$. Since it is closed it represents a cohomology class in $H^N(M_K;\R)$ and can be paired with a (compact) cycle $C$ in $\Zcal_N(M_K;\R)$. The function
\begin{align} \label{integral}
\tau & \longmapsto \int_C \Theta_{KM}(\tau,\varphi_\fin)
\end{align}
is a holomorphic modular form of weight $N$. Moreover, we have the Fourier decomposition
\begin{align}
 \int_C \Theta_{KM}(\tau,\varphi_\fin) = \kappa \sum_{n \in \Q_{\geq 0}} \left ( \int_C \Theta_n(\varphi_\fin) \right ) e^{2i n \pi \tau},
\end{align}
where $\Theta_n(\varphi_\fin)$ is a Poincaré dual in $ \Omega^N(M_K)$ to the cycle $C_n(\varphi_\fin)$, when $n$ is positive. Here $\kappa$ is a constant equal to $2$ if $K_\fin \cap H(\Q)^+$ contains $-1$, and $1$ otherwise.
Since $C$ is compact, the Fourier coefficients for positive $n$ are equal to the topological intersection numbers $\Bigl < C_n(\varphi_\fin),C \Bigr >_{M_K}$ on $M_K$, see Section \ref{poincaredual}.

\paragraph{Limitations of the work of Kudla-Millson.} The integral of $\Theta_{KM}(\tau,\varphi_\fin)$ along a compact cycle is a modular form of weight $N$ whose Fourier coefficients are intersection numbers. If we replace the compact cycle by a non-compact cycle $C$, then the results of Kudla and Millson do not apply and the following problem may arise. First the integral \eqref{integral} might diverge. Secondly, even if the integral does converge the resulting function in $\tau$ can be non-holomorphic. This is for example what happens in \cite{funke-compositio}: the Kudla-Millson theta series associated to a quadratic space of signature $(1,2)$ is integrated over the whole modular curve and the resulting integral converges to a non-holomorphic modular form. Thirdly, it is not immediate that the Fourier coefficients $\int_C \Theta_n(\varphi_\fin)$ can be interpreted as an intersection number between the two non-compact cycles $C_n(\varphi_\fin)$ and $C$, since a priori such a number is not well-defined; see Remark \ref{rmkproblem}.

\paragraph{A non-compact cycle $C\otimes \psi$.} Let $\psi \colon F^\times \backslash \A_F^\times \longrightarrow \C^\times$ be a totally odd unitary Hecke character of finite order. To simplify the notation let us suppose in the introduction that $\psi$ is unramified. Hence the finite part of this Hecke character can be viewed as a character on the narrow class group, as in the beginning of the introduction.

Let $\SO(F^2) \subset \GL_2(F)$ be the orthogonal group of the quadratic space $X_F^0=F^2$ with the quadratic form defined above. The group $F^\times$ can be identified with $\SO(F^2)$ by
\begin{align}
F^\times& \longrightarrow \SO(F^2) \nonumber \\
t & \longmapsto \begin{pmatrix}
t & 0 \\ 0 & t^{-1}
\end{pmatrix}.
\end{align}
On the other hand the group $\SO(F^2)$ can naturally be embedded in $\SO(F^2_\Q) \subset \GL_{2N}(\Q)$ by restriction of scalars. Composing the two and passing to the adèles gives an embedding 
\begin{align} h \colon \A_F^\times \hooklongrightarrow \SO(F^2_\A)\subset \GL_{2N}(\A), \end{align} where $F^2_\A=F^2_\Q \otimes_\Q \A$. For $K_\fin$ large enough we have that $h(\widehat{\Ocal}^\times)$ is contained $K_\fin$. The embedding $h$ then induces an immersion of
$M_\Ocal$ in $M_K$ where 
\begin{align}
    M_\Ocal \coloneqq F^\times \backslash \A_F^\times/\{\pm 1\}^{N-1} \times \widehat{\Ocal}^\times.
\end{align} 
The space $M_\Ocal$ is not connected. There is a bijection between classes in the narrow class group $\Cl(F)^+$ and connected components of $M_{\Ocal}$. More precisely it can be written as a disjoint union
\begin{align}
    M_{\Ocal} = \bigsqcup_{[\afrak] \in \Cl(F)^+} \Gamma \backslash \R_{>0}^N
\end{align}
where $\Gamma \coloneqq \Ocal^{\times,+}$ are the totally positive units in $\Ocal$. The connected components are $N$-dimensional tori. The image by the immersion in $M_K$ of the connected component of $M_\Ocal$ corresponding to the ideal class $[\afrak]$ in $\Cl(F)^+$ is a torus $C_\afrak$. We twist it by the Hecke character $\psi$ to obtain a relative cycle
\begin{align}
    C\otimes \psi \coloneqq \sum_{[\afrak] \in \Cl(F)^+} \psi(\afrak) C_\afrak \in \Zcal_N(\overline{M_K},\partial \overline{M_K};\R),
\end{align}
where we view $\psi$ as a character on the ray class group.

In our case, the integral of $\Theta_{KM}(\tau,\varphi_\fin)$ over $C\otimes \psi$ does not converge, but can be regularized by adding a parameter $t^s$ with $s$ a complex number and isolating some singular terms as it is done in \cite{kudlasingular}. Moreover, although the cycles $C\otimes \psi$ and $C_n(\varphi_\fin)$ are both non-compact we show that the intersection number $\bigl < C_n(\varphi_\fin), C \otimes \psi \bigr >$ is well-defined, see \eqref{noncompactinter}.

\paragraph{Eisenstein series and diagonal restriction.} For a Hecke character $\psi$ as above and a finite Schwartz function $\phi_\fin$ in $\Scal(F^2_{\A,\fin})$ we can define an Eisenstein series
\begin{align}
    E(\tau_1, \dots,\tau_N,\phi_\fin,\psi) = \restr{E(\tau_1, \dots,\tau_N,\phi_\fin,\psi,s)}{s=0}
\end{align}
by analytic continuation, where $(\tau_1, \dots,\tau_N)$ is a point in $\HH^N$; see \eqref{unfold22}. It is a Hilbert modular form of parallel weight one. If we take $\tau = \tau_1= \cdots = \tau_N$, then the diagonal restriction $E(\tau,\dots,\tau, \phi_\fin,\psi)$ is a modular form of weight $N$.

Let $l_1$ and $l_2$ be the isotropic lines spanned by the isotropic vectors $\ebf_1\coloneqq \transp{(1,0)}$ and $\ebf_2 \coloneqq \transp{(0,1)}$ in $F^2$. For a Schwartz function on $F^2_{\A_\fin}$ let $\varphi_1(x) \coloneqq \varphi_\fin \begin{pmatrix}
 x \\ 0
\end{pmatrix}$ and $\varphi_2(y) \coloneqq \varphi_\fin \begin{pmatrix}
 0 \\ y
\end{pmatrix}$ be the Schwartz functions in $\Scal(F_{\A,\fin})$ obtained by restricting $\varphi_\fin$ to $l_1$ and $l_2$.

\begin{thm*}(Theorem \ref{fouriercoeffs}) Let $\varphi_\fin$ be any Schwartz function in $\Scal(F^2_{\A_{\fin}})$ such that $\varphi_1$ or $\varphi_2$ vanishes. The diagonal restriction has the Fourier expansion
\[E(\tau,\dots,\tau,\phi_\fin,\psi)=\zeta_\fin(\varphi_1,\psi^{-1},0)+\zeta_\fin(\varphi_2,\psi,0) +(-1)^N 2^{N-1}\kappa\sum_{n \in \Q_{>0}} \bigl < C_n(\varphi_\fin), C \otimes \psi \bigr >_{M_K} e^{2i \pi n \tau},\]
where $\phi_\fin =\Fcal\varphi_\fin$ is a partial Fourier transform of $\varphi_\fin$ and $\zeta_\fin$ is a zeta integral (see \eqref{zetadef}).
\end{thm*}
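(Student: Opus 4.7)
The plan is to identify the regularized Kudla--Millson theta integral
\[
\int_{C\otimes \psi}^{\mathrm{reg}} \Theta_{KM}(\tau,\varphi_\fin)
\]
with the diagonal restriction $E(\tau,\dots,\tau,\phi_\fin,\psi)$ and then read off the Fourier expansion of both sides. The central tool is a dual reductive see-saw: on the orthogonal side I use the inclusion $\SO(F^2) \subset \SO(F^2_\Q)$, where $\SO(F^2) \cong F^\times$ acts diagonally on $F^2$, and on the symplectic side the diagonal embedding $\SL_2 \hookrightarrow \Res_{F \mid \Q}\SL_2 \cong \SL_2^{N}$, which on the symmetric space realizes precisely the restriction $\tau \mapsto (\tau,\dots,\tau)$. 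The resulting see-saw identity equates the integral of the big Kudla--Millson theta kernel over the torus $C \otimes \psi$ with the restriction to the diagonal of a theta lift of $\psi$ from $F^\times \backslash \A_F^\times$ for the small pair.

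For the small pair the quadratic space $F^2$ with form $xy' + x'y$ is a hyperbolic plane over $F$, and the (regularized) Siegel--Weil formula identifies the theta integral of $\psi$ against $\phi_\fin = \Fcal \varphi_\fin$ with the Hilbert Eisenstein series $E(\tau_1,\dots,\tau_N,\phi_\fin,\psi,s)$ at $s=0$. Combined with the see-saw, this yields the key equality
\[
\int_{C\otimes \psi}^{\mathrm{reg}} \Theta_{KM}(\tau,\varphi_\fin) = E(\tau,\dots,\tau,\phi_\fin,\psi).
\]
For $n>0$ the $n$-th Fourier coefficient of the Kudla--Millson kernel is the closed form $\Theta_n(\varphi_\fin)$ Poincaré dual to the special cycle $C_n(\varphi_\fin)$, so integrating it against $C \otimes \psi$ produces the intersection number $\bigl\langle C_n(\varphi_\fin), C \otimes \psi \bigr\rangle_{M_K}$ once I verify that this pairing of two non-compact cycles is well-defined, handled as in Section \ref{poincaredual} (the issue flagged in Remark \ref{rmkproblem}). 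The constant term absorbs the regularization: the divergence is supported on the two isotropic $F^\times$-orbits through $\ebf_1$ and $\ebf_2$, and isolating them yields the two contributions $\zeta_\fin(\varphi_1,\psi^{-1},0)$ and $\zeta_\fin(\varphi_2,\psi,0)$, appearing symmetrically through the intertwining operator (equivalently, the functional equation) of the Eisenstein series. The normalization $(-1)^N 2^{N-1}\kappa$ is tracked through the see-saw and the Poincaré-duality identifications.

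The main difficulty will be carrying out the regularization consistently on both sides. The integral over $C \otimes \psi$ is genuinely divergent, so I insert a parameter $t^s$ as in \cite{kudlasingular}, match it with a shifted Eisenstein series on the other side of the see-saw, take analytic continuation, and verify at $s=0$ that the singular part matches the $\zeta_\fin$ contributions while the regular part yields the holomorphic generating series of intersection numbers. The hypothesis that $\varphi_1$ or $\varphi_2$ vanishes is precisely what prevents the surviving regularized integral from acquiring a non-holomorphic completion (as it otherwise would, cf.\ \cite{funke-compositio}): under this assumption only one of the two isotropic strata contributes to the non-constant coefficients, so the final answer is a genuine holomorphic modular form.
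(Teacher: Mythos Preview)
Your proposal takes essentially the same approach as the paper: the see-saw plus Poisson summation and unfolding (what you package as Siegel--Weil for the hyperbolic plane) converts the regularized theta integral $2^{N-1}I(\tau,\varphi,\psi,s)$ into the Eisenstein series $E(\tau,\dots,\tau,\phi_\fin,\psi,s)$, and the Fourier analysis is carried out exactly as you describe (in the paper, Propositions \ref{Ifourier} and \ref{intertop}).

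One correction: your explanation of the hypothesis that $\varphi_1$ or $\varphi_2$ vanishes is off. The two isotropic $F^\times$-orbits contribute \emph{only} to the constant term, not to the non-constant coefficients, and the issue is not holomorphicity. Their zeta-integral contributions converge on the disjoint half-planes $\re(s)<-1$ and $\re(s)>1$ respectively (Proposition \ref{convergence}), so one cannot evaluate both at $s=0$; the vanishing assumption kills one of them so that the remaining one is defined at $s=0$ by analytic continuation. The vanishing of the negative Fourier coefficients---and hence holomorphicity---comes instead from the explicit Bessel computation $J_\8(\xbf,0)=0$ for $\xbf\in M^-$ (Lemma \ref{vanishintegral}, part 4), and holds regardless of this hypothesis.
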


The equality is proved by showing that both sides of the equality are equal to the regularized integral 
\[2^{N-1}\int_{C\otimes \psi} \Theta_{KM}(\tau,\varphi_\fin).\]

Note that the intersection takes place in $M_K$ and is between the $N$-dimensional cycle $C \otimes \psi$ of dimension $N$ and the cycle $C_n(\varphi_\fin)$ of codimension $N$ (and dimension $N^2-N$).
The constant term consists of the values at $s=0$ of the analytic continuation of two zeta functions converging on two disjoint half planes $\re(s)>1$ and $\re(s)<-1$. Hence the condition of vanishing of $\varphi_1$ or $\varphi_2$ is used to make sure that one of the two terms is zero and that an analytic continuation exists.

\paragraph{A seesaw.} The theta series $\Theta_{KM}(\tau,\varphi_\fin)$ is a theta kernel for the dual pair $\SL_2(\Q) \times \SO(F^2_\Q)$. The theorem can be summarized by the following seesaw
\[
\begin{tikzcd}
\SO(F^2_\Q) \arrow[dash, dr] & \SL_2(F) \\
F^\times \arrow[dash]{u} \arrow[dash, ru] & \SL_2(\Q) \arrow[dash]{u},
\end{tikzcd}
\]
that relates the theta kernel $\Theta_{KM}(\tau,\varphi_\fin)$ to a theta kernel for  $F^\times \times \SL_2(F)$. The vertical arrow on the left correspond to the cycle $C\otimes \psi$, that is obtained from the group embedding of $F^\times$ in $\SO(F^2_\Q)$. The vertical arrow on the right correspond to the diagonal restriction of the Hilbert-Eisenstein series of parallel weight one.

\subsection*{Acknowledgements} This project was done during my thesis. I thank my advisors Nicolas Bergeron and Luis Garcia for suggesting me this topic and for their support. I also thank Henri Darmon and Jan Vonk for helpful discussions about this paper. I was funded from the European Union’s Horizon 2020 research and innovation programme under the Marie Skłodowska-Curie grant agreement N$\textsuperscript{\underline{\scriptsize o}}$754362 \includegraphics[width=6mm]{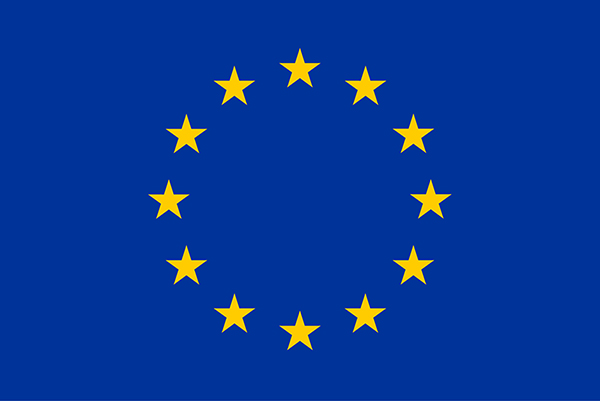}.

\section{Preliminaries} \label{notgen}

\paragraph{Intersection numbers.} \label{poincaredual}
Let $M$ be an arbitrary oriented Riemannian manifold of dimension $m$ with a Riemannian metric $q_z$. Let $o(T_zM)$ in $\bigwedge^m T_zM$ denote the orientation at a point $z$ in $M$.  Let $M_1$ and $M_2$ be two immersed oriented submanifolds of complementary dimensions $m_1$ and $m_2$, with orientations that we denote by $o(T_zM_1)$ and $o(T_zM_2)$. Let $z$ be an intersection point in $M_1 \cap M_2$. We say that the intersection is transversal if $T_zM=T_zM_1 \oplus T_zM_2$, equivalently if $T_zM_1 \cap T_zM_2$ is empty. In this case $ o(T_z M_1) \wedge o(T_zM_2)$ is a multiple of $o(T_zM)$ and we define
\begin{align}
    I_z(M_1,M_2) \coloneqq \sgn \left ( \frac{ o(T_z M_1) \wedge o(T_zM_2) }{o(T_zM)} \right ).
\end{align}
Furthermore, if $M_1 \cap M_2$ is finite we define the {\em topological intersection number} on $M$
\begin{align}
    \bigl < M_1,M_2 \bigr >_M \coloneqq \sum_{z \in M_1 \cap M_2 } I_z(M_1,M_2).
\end{align}
In particular, if one of the two submanifolds is compact, then the intersection is finite.

Viewing the Riemannian metric $q_z$ as a bilinear pairing on $T_zM$ we get a bilinear pairing
\begin{align}
\sideset{}{^k} \bigwedge T_zM \otimes \sideset{}{^k} \bigwedge T_zM & \longrightarrow \R  \nonumber \\
v_1 \wedge \cdots \wedge v_k \otimes w_1 \wedge \cdots \wedge w_k & \longmapsto \det q_z(v_i,w_j)_{ij}
\end{align}
for every positive integer $k$. Let $N_zM_1=(T_zM_1)^\perp$ be the normal space, which is the orthogonal complement of $T_zM_1$ with respect to $q_z$. We fix an orientation $o(N_zM_1)$ of $N_zM_1$ via the rule $    o(T_zM_1) \wedge o(N_zM_1) = o(T_zM).$ One can check that the intersection number is also given by 
\begin{align}
  I_z(M_1,M_2) = \sgn q_z\bigl (o(T_zM_2),o(N_zM_1) \bigr ).
\end{align}

\paragraph{Poincaré duals.}

Let $M$ be an $m$-dimensional real oriented manifold without boundary. The integration map yields a non-degenerate pairing \cite[Theorem.~5.11]{botu}
\begin{align} \cohom^{q}(M) \otimes_\R  \cohom_c^{m-q}(M)& \longrightarrow \R \nonumber \\
[\omega]\otimes [\eta] & \longmapsto \int_M \omega \wedge \eta,    
\end{align} where $\cohom_c(M)$ denotes the cohomology of compactly supported forms on $M$. This yields an isomorphism between $\cohom_c^{m-q}(M)^\vee$ and $\cohom^{q}(M)$, where $\cohom_c^{m-q}(M)^\vee$ is the dual space of $\cohom_c^{m-q}(M)$. An immersed submanifold $C$ of codimension $q$ in $M$ defines a linear functional on $\cohom_c^{m-q}(M)$ by
\begin{align}
 \omega \longmapsto \int_C \omega.   
\end{align} By the isomorphism between $\cohom_c^{m-q}(M)^\vee$ and $\cohom^{q}(M)$ there is a unique cohomology class $\PD(C)$ in $H^q(M)$ representing this functional {\em i.e. } 
\begin{align}
\int_M   \omega \wedge \PD(C) = \int_C \omega,
\end{align}
 for every closed $(m-q)$-form $\omega$ on $M$. We call $\PD(C)$ {\em the Poincaré dual class to $C$}, and any closed differential form representing the cohomology class $\PD(C)$ {\em a Poincaré dual form to $C$}. If $C$ is compact, then $\PD(C)$ is compactly supported.
Let $C$ and $C'$ be immersed submanifolds and let one of them be compact. Then
\begin{align}
    \int_{C}\PD(C')=\int_{M_K} \PD(C') \wedge \PD(C) = \bigl < C',C \bigr >_M,
\end{align}
where the right hand side is the topological intersection numbers.

\paragraph{Number fields.} \label{numberfields}  Let $F=\Q(\lambda)$ be a totally real field of degree $N$, let $f_\lambda$ be the minimal polynomial of $\lambda$, and $\Ocal$ be ring of integers. We will write $F_\Q$ when we want to emphasize that we view $F$ as a $\Q$-vector space. Let $\epsilon_1, \dots, \epsilon_N$ be an oriented $\Z$-basis of $\Ocal$. With this basis we identify $F_\Q$ as $\Q^N$ as column vectors. For $\mu$ in $ F^\times$ let $\gamma(\mu)$ in $\GL_N(\Q)$ be the matrix representing the left multiplication on $F_\Q$ by $\mu$. The image of this representation is the centralizer of $\gamma(\lambda)$ in $\GL_N(\Q)$. It is a maximal $\R$-split torus in $\GL_N(\Q)$. The map sending $(x,y)$ in $F \times F$ to $\Tr_{F/\Q}(xy)$ defines a non-degenerate pairing on $F$. The dual of $\Ocal$ is the inverse different ideal $\dfrak^{-1}$. Let $\sigma_1,\dots, \sigma_N$ be the $N$ real embeddings of $F$. We order them such that the matrix
\begin{align} \label{ginf}
 g_\8 \coloneqq \begin{pmatrix} \sigma_1(\epsilon_1) & \hdots & \sigma_1(\epsilon_N) \\[2.5ex] \vdots & & \vdots \\[2.5ex] \sigma_N(\epsilon_1) & \hdots & \sigma_N(\epsilon_N) \end{pmatrix} \in \GL_N(\R)
\end{align}
has positive determinant. We set 
\begin{align} \label{Adef}
A\coloneqq \transp{g}_\8 g_\8  =(\Tr_{F/\Q}(\epsilon_i\epsilon_j))_{ij} \in \GL_N(\Q), 
\end{align} whose determinant $d_F \coloneqq \det(A)$ is the discriminant of $F$.

\paragraph{Local fields.} \label{totreal}
Let $w$ denote a place of $F$ and $v$ a place of $\Q$. Over $\Q_v$ the (separable) minimal polynomial $f_\lambda$ of $\lambda$ splits as product $f_\lambda = \prod_{w \mid v} f_w
$ of irreducible factors $f_w$ in $\Q_v[x]$, where each factor correspond to a place $w$ dividing $v$; see \cite[Proposition.~8.2, p.~163]{neukirch}. For every $w$ let $\lambda_w$ in $\overline{\Q}_v$ be a root of $f_w$ and let $F_w \coloneqq \Q_v(\lambda_w)$. Let $\vert x \vert_w \coloneqq \sqrt[n_w]{\vert \N(x) \vert_v}$ be the valuation on $F_w$ that extends $\vert \cdot \vert_v$, where $n_w \coloneqq \deg(f_w)$ is the degree of $F_w$ over $\Q_v$. If $w$ is finite we denote by $\Ocal_w$ its ring of integers, $\m_w$ its unique maximal ideal, $\pi_w$ a uniformizer and $q_w$ the cardinality of the residue field $\Ocal_w/\m_w$. If $\p$ is the prime ideal corresponding to a finite place $w$ we will occasionally replace the index $w$ by $\p$ and write $F_\p,\Ocal_\p, \vert \cdot \vert_\p,\dots$.

If $v=p$ is finite, let  $f_\lambda \equiv \prod_{w \mid v} P_w^{e_w} \pmod{p}$ be the factorization modulo $p$ of $f_\lambda$ into irreducibles $P_w$. If $\Ocal=\Z[\lambda]$, then $p$ splits as
\begin{align} \label{factorisation}
p\Ocal=\prod_{w \mid v} \p_w^{e_w},    
\end{align}
where $\p_w=P_w(\lambda)\Z+p \Z$; see \cite[Proposition.~8.3, p.~48]{neukirch}. 

\paragraph{Restriction of scalars.}
For any $v$ we define the $N$-dimensional $\Q_v$-algebra $F_v \coloneqq \prod_{w \mid v} F_w$, and for a finite place $v=p$ we set $\Ocal_p \coloneqq \prod_{w \mid p} \Ocal_w$. At every place we have an embedding of  $F$ in $F_w $ given by sending $\lambda$ to $\lambda_w$. In particular when $v$ is totally split we have $N$ such embeddings. Let us denote by $F_{\Q_v}$ the $\Q_v$-algebra $F_{\Q} \otimes \Q_v$. We have an isomorphism of $\Q_v$-algebras:
\begin{align} \label{varsigmaiso}
    \varsigma_v \colon F_{\Q_v} & \longrightarrow F_v \nonumber \\
    \alpha \otimes t & \longmapsto (\alpha_w t)_{w\mid v},
\end{align}
where $\alpha_w$ is the image of $\alpha$ in $F_w$. It induces an isomorphism between $\Ocal \otimes_\Z \Z_p$ and $\Ocal_p$ at the finite places; see \cite{wielonsky} for a proof.
 We fix a $\Z_p$-basis of $\Ocal_w$ for every $w$ diving $p$. This gives a $\Z_p$-basis of $\Ocal_p$, which in turn identifies $F_p$ with $\Q_p^N$. On the other hand, at the archimedean place we have $F_\8=\R^N$ since $F$ is totally real. For $t$ in $F_v^\times$ we let $g(t)$ in $\GL(F_{\Q_v}) \simeq \GL_N(\Q_v)$ be the matrix representing the left multiplication on $F_v$ by $t$. Then there is a matrix $g_v$ in $\Hom(F_{\Q_v},F_v) \simeq \GL_N(\Q_v)$ such that for every $t$ in $F$ we have $\gamma(\varsigma^{-1}t)=g_v^{-1} g(t) g_v$ in $\GL_N(\Q_v)$. However we will usually identify $F_{ \Q_v}$ with $F_v$ and simply write $\gamma(t)$ instead of $\gamma(\varsigma^{-1}t)$. In the case where $v=\8$, then $F_\8=\R^N$. If $t=(t_1, \dots,t_N)$ in $F_\8$, then $g(t)$ is the matrix with $(t_1, \dots,t_N)$ on the diagonal and we can take $g_\8$ as in \eqref{ginf}. Let $F_\A \coloneqq F_\Q \otimes \A$. The map $\varsigma \coloneqq \otimes_{v } \varsigma_v$ induces an isomorphism $\varsigma$ between $F_\A$ and $\A_F,$ which also identifies $\widehat{\Z} \otimes \Ocal$ with $\widehat{\Ocal}$.

\paragraph{Haar measures.} \label{haar}
Let $F$ be a totally real number field and $F_w$ the completion at a place $w$. We identify $F_w$ with its dual $F^\vee_w$ by the pairing $\Tr_{F_w / \Q_v}(xy)$. We fix the additive character $\chi_{\Q_v}$ on $\Q_v$ defined by
\begin{align}
\chi_{\Q_v}(x) \coloneqq  \begin{cases} e^{2i\pi x} & \textrm{if} \; v=\8 \\
e^{2i\pi \{x\}_p} & \textrm{if} \; v=p,
\end{cases}
\end{align}
where $\{x\}_p$ is the fractional part of $x$ in $\Q_p$. Let $k$ be an étale algebra extension of $\Q_v$. We can extend this character to a character $\chi_k$ on $k$ by setting $\chi_k \coloneqq \chi_{\Q_v} \circ \Tr_{k \mid \Q_v}$.

There is a unique choice of Haar measure $dx_w$ on $F_w$ which is self dual with respect to $\chi_{F_w}$. This is the Haar measure normalized such that $\vol(\dfrak_w^{-1})=1$, where 
\[\dfrak_w^{-1} = \left \{ x \in F_w \vert \, \Tr_{F_w / \Q_v}(xy) \in \Z_v \, \textrm{for all} \, y \in \Ocal_{w} \right \}\]
is the inverse different ideal. This is equivalent to $\vol(\Ocal_{w})=\sqrt{\N(\dfrak_w)}$. We obtain a measure $dx\coloneqq \prod_w dx_w$ on $\A_F$ which satisfies $\vol(\widehat{\Ocal})=d_F^{-\frac{1}{2}}$.
For a Schwartz function $ \Phi_w$ in $\Scal(F_w)$ let $\Phi^\vee_w $ in $\Scal(F_w)$ be the Fourier transform defined by
\begin{align} \label{Fourier1}
    \Phi_w^\vee (t)\coloneqq \int_{\A_F} \Phi_w(u)\chi_w(-tu)du,
\end{align}
where $du$ is the self-dual measure. On $\A_F^\times$ we define the Haar measure
\begin{align}
    dt_w^\times \coloneqq m_w \frac{dt_w}{\vert t_w \vert_w},
\end{align}
where $m_w=1$ if $w$ is archimedean. After fixing a character $\psi$ of conductor $\fffrak$ the constant $m_w$ for $w$ non-archimedean will be choosen such that $\vol^\times(\widehat{U}(\fffrak))=1$.

\paragraph{Class groups.}

For an ideal $\fffrak$ of $\Ocal$ let $\Ical_\fffrak(F)$ be the set of fractional ideals in $F$ coprime to $\fffrak$ and $P_\fffrak(F)$ the set of principal ideals in $\Ical_\fffrak(F)$. Let $P_\fffrak(F)^+$ be the set of principal ideals whose generator is positive at all places.  We define the class group $\Cl(F) \coloneqq \Ical_\Ocal(F)/P_\Ocal(F)$ and the narrow class group $\Cl(F)^+\coloneqq \Ical_\Ocal(F)/P_\Ocal(F)^+$. We also define the ray class groups $\Cl_\fffrak(F) \coloneqq \Ical_\fffrak(F)/P_\fffrak(F)$ and $\Cl_\fffrak(F)^+ \coloneqq \Ical_\fffrak(F)/P_\fffrak(F)^+$. The map sending an ideal $\afrak$ to the finite adèle $(\pi_w^{w(\afrak)})_w$ induces an isomorphism
\begin{align}
    \Cl_\fffrak(F)^+ \simeq F^{\times,+} \backslash \A^\times_{F,\fin}/\widehat{U}(\fffrak).
\end{align}

\paragraph{Hecke characters and $L$-functions.} \label{Heckelfunc}
 Let $\psi \colon F^\times \backslash \A_F^\times \longrightarrow \C^\times$ be a finite order Hecke character. We can write $\psi=\psi_\8 \otimes \psi_\fin$ where the finite part $\psi_\fin$ is a character on $F^\times \backslash \A_{F,\fin}^\times$. The conductor of $\psi$ is the largest ideal $\fffrak$ of $\Ocal$ such that $\psi_{\fin}$ vanishes on
 \begin{align}
     \widehat{U}(\fffrak) = \prod_{w}U_w(\fffrak)
 \end{align}
where
 \begin{align}
U_w(\fffrak) \coloneqq  \begin{cases} \Ocal_w^\times & \textrm{if} \; w(\fffrak)=0 \\
1+\p^{w(\fffrak)} & \textrm{if} \; w(\fffrak)>0.
\end{cases}
\end{align}
We say that $\psi$ is \textit{unramified} if the conductor is $\Ocal$ {\em i.e.} $\psi_\fin$ is trivial on $\widehat{\Ocal}^\times$. At an archimedean places $\psi_\sigma$ is given by
 \begin{align}
    \psi_\sigma(t) = \vert t_\sigma \vert ^{a_\sigma} \left ( \frac{t_\sigma}{\vert t_\sigma \vert } \right )^{b_\sigma} 
 \end{align}
 where $a_\sigma$ can be any complex number and $b_\sigma$ is $0$ or $1$.
 We say that $\psi$ is \textit{totally odd} if $b_\sigma=1$ at all places. 
%\begin{rmk} The existence of totally odd characters implies that $F$ does not contain a unit $\kappa \in \Ocal^\times$ of negative norm. Suppose it does, then in the narrow class group we have $(\kappa)=\Ocal=1$. Hence if $\psi$ were totally odd we would have
%\begin{align} \label{fact1}
%    1=\psi_\fin((\kappa))=\psi_\8(\kappa)=\prod_\sigma \sgn(\kappa^\sigma)= \frac{\N_{F / \Q}(\kappa)}{\vert \N_{F / \Q}(\kappa) \vert}=-1.
%\end{align}
%\end{rmk}
\noindent The finite part of a Hecke character of conductor $\fffrak$ can also be seen as a character
\begin{align}
    \psi \colon \Ical(\fffrak) & \longrightarrow \C^\times \nonumber \\
    \afrak & \longmapsto \psi(1,\afrak)
\end{align}
such that at principal ideals in $\Pcal_\fffrak^+(F)$ we have 
\begin{align}
    \psi((\alpha))=\prod_{\sigma} \vert \sigma(\alpha) \vert ^{a_\sigma} \left ( \frac{\sigma(\alpha)}{\vert \sigma(\alpha) \vert } \right )^{b_\sigma},
\end{align}
where the product is taken over all real places and pairs of complex embeddings. To such a character we can attach an $L$-function 
\begin{align}
    L^\fffrak(\psi,s) \coloneqq \sum_{{\substack{0 \neq \afrak \subseteq \Ocal \\ \gcd(\fffrak,\afrak)=1}}} \frac{\psi(\afrak)}{\N(\afrak)^s}.
\end{align}
It converges for $\re(s)>1$, admits a meromorphic continuation and a functional equation. Moreover, it has the Euler product 
\begin{align}
    L^\fffrak(\psi,s)=\prod_{\substack{0 \neq \p \subset \Ocal \; \textrm{prime} \\ \gcd(\p,\fffrak)=1}} L_{\p}(\psi,s)
\end{align}
where the local factors are $L_{\p}(\psi,s) \coloneqq (1-\psi(\p)\N(\p)^{-s})^{-1}$. For more on $L$-functions we refer to \cite{iwasawaL} and \cite{bump}.

\paragraph{Zeta functions.} \label{zetafunctions} For a Schwartz function $\Phi_w$ in $\Scal(F_w)$ we define the {\it local Zeta integrals}
\begin{align}
\zeta_w(\Phi_w,\psi,s) \coloneqq \int_{F_w^\times} \Phi_w(t_w)\psi_w(t_w)\lvert t_w \rvert^{s} dt_w^\times,
\end{align}
which converge for $\re(s)>0$. For a Schwartz function $\Phi$ in $\Scal(\A_F)$ we define the {\it Zeta integral}
\begin{align} \label{zetadef}
    \zeta(\Phi,\psi,s) \coloneqq \int_{\A_F^\times} \Phi(t)\psi(t)\lvert x \rvert^{s} dt^\times.
\end{align}
This function converges absolutely for $\re(s)>1$ by \cite[Proposition.~3.1.4 (iii)]{bump}. For a decomposable Schwartz function $\Phi=\otimes \Phi_w$ in $\Scal(\A_F)$ and for $\re(s)>1$ the decomposition
\begin{align}
\zeta(\Phi,\psi,s)=\prod_{w} \zeta_w(\Phi_w,\psi,s)
\end{align}
is valid, where the product is taken over the places of $F$. We will also denote by $\zeta_\fin$ the finite part of the zeta integral:
 \begin{align} \label{finitezeta}
\zeta_\fin(\Phi_\fin,\psi,s) \coloneqq \int_{\A^\times_{F,\fin}} \Phi_{\fin}(t_\fin)\psi_{\fin}(t_\fin)\lvert t_\fin \rvert^{s} dt_\fin^\times.
 \end{align}
The zeta function $\zeta(\Phi,\psi,s) $ admits a meromorphic continuation to the complex plane and a functional equation $\zeta(\Phi,\psi,s)=\zeta(\Phi^{\vee},\psi^{-1},1-s),$
where $\Phi^\vee$ in $\Scal(\A_F)$ is the Fourier transform of $\Phi$ defined in \eqref{Fourier1}. In our case, at an archimedean place $\sigma$, the Schwartz function will be $\Phi_\sigma(t)=e^{-\pi t^2}t$ and the character $\psi_\sigma(t)=\sgn(t)$. With this choice we then have 
\begin{align} \label{zeta1}
 \zeta_\8(\Phi,\psi,s)& = \Lambda(s),
\end{align}
where $\Lambda(s) \coloneqq \Gamma\left (\frac{1+s}{2} \right)^N\pi^{-\frac{N(1+s)}{2}}$.
At the places where $\Phi_w=\id_{\Ocal_w}$ and the character $\psi$ is unramified we have 
\begin{align} \label{zeta2}
 \zeta_w(\Phi,\psi,s)& =  \vol^\times (\Ocal^\times_w) L_{w}(\psi,s)
\end{align}
where $L_{w}(\psi,s)$ are the local factors as in Subsection \ref{Heckelfunc} for $\p$ corresponding to $w$.

%\subsection{Theta kernels} \label{locweil}

\paragraph{Weil representation.} \label{locweil} Let $L$ be a number field (we will consider $L=\Q$ or $L=F$ totally real) and $(X_L,Q)$ be a $2m$-dimensional quadratic space over $L$. Let $(W_L,B)$ be the symplectic space $W_L=X_L\oplus X_L$ with the symplectic form 
\begin{align}
B\left ( \begin{pmatrix}
\xbf \\ \xbf'
\end{pmatrix}, \begin{pmatrix}
\ybf \\ \ybf'
\end{pmatrix} \right )=Q(\xbf,\ybf')-Q(\xbf',\ybf).
\end{align}
Let $k$ be the completion of $L$ at some place, let $X_k\coloneqq X \otimes_{L} k$ and $W_k \coloneqq W_L \otimes_L k$. Let $\chi_{k}$ be the additive character of $k$ defined in Subsection \ref{haar}.  Let
\begin{align}
    g = \begin{pmatrix}
     g_1 & g_2 \\ g_3 & g_4
    \end{pmatrix} \in \Sp(W_k) \subset \GL(W_k)
\end{align}
with $g_1$ and $g_4$ in $\GL(X_k)$,  and $g_2$ and $g_3$ in $\End(X_k)$. For $\varphi$ a Schwartz function in $\Scal(X_k)$ we define the {\em local (projective) Weil representation} \cite[p.~40]{moeglin}
\begin{align}
    \omega \colon \Sp(W_k) \longrightarrow \Ucal(\Scal(X_k))
\end{align}
by the operator
\begin{align} \label{weil rep}
    \omega(g)\varphi(\xbf) \coloneqq \int_{X_k/\ker(g_3)} \varphi( \adjast{g_1} \xbf+ \adjast{g_3}\ybf) \chi_k \left ( \frac{1}{2} Q (  \adjast{g_1}\xbf,\adjast{g_2} \xbf )+ Q( \adjast{g_2} \xbf, \adjast{g_3} \ybf ) + \frac{1}{2}Q(\adjast{g_3} \ybf , \adjast{g_4}\ybf  ) \right )d\mu(\ybf),
\end{align}
where $\adjast{g}$ is defined by $Q(g\xbf,\ybf)=Q(\xbf,\adjast{g}\ybf)$. The Haar measure $d\mu(\ybf)$ on $ X_k/ \ker(g_3)$ is the unique one that makes this operator unitary with respect to the $L^2$-norm on $\Scal(X_k)$. We can then extend the local Weil representation to a global Weil representation $\Scal(X_{\A_L})$ of $\Sp(W_{\A_L})$. 

The operator \eqref{weil rep} only defines a projective representation of $\Sp(W_k)$ in the sense that $\omega(g_1 g_2)=c(g_1,g_2) \omega(g_1) \omega(g_2)$ for some complex cocycle $c(g_1,g_2)$ of norm $1$; see \cite{rao}. However, on some subgroups of $\Sp(W_L)$ the cocycle $c(g_1,g_2)$ becomes trivial and we have a true representation of those subgroups.

\paragraph{Dual pairs and theta kernels.} \label{dualpairtheta}

A {\em dual pair} $(G,H)$ is a pair of subgroups $G$ and $H$ of $\Sp(W_L)$ that centralize each other in $\Sp(W_L)$. This allows us to view $G \times H$ as a subgroup of $\Sp(W_L)$ and to restrict the Weil representation to this product. If $\varphi$ is a Schwartz function in $\Scal(X_{\A_L})$ we define the theta kernel
\[\Theta(g,h,\varphi) \coloneqq \sum_{\xbf \in X(L)} \omega_{os}(g,h)\varphi(\xbf)\]
for $(g,h)$ in $G(\A_L) \times H(\A_L)$. A {\em seesaw pair} is a pair of dual pairs $(G^0,H^0)$ and $(G,H)$ such that $H^0$ is contained in $H$ and $G$ is contained in $G^0$. Such a pair is represented by a diagram
\[
\begin{tikzcd}
H \arrow[dash, dr] & G^0 \\
H^0 \arrow[dash]{u} \arrow[dash, ru] & G. \arrow[dash]{u}
\end{tikzcd}
\]
The seesaw principle relies on the observation that the theta kernel associated to these two dual pairs pairs agree on the common subgroup $H^0 \times G$. We will be interested in a seesaw that is obtained by restriction of scalars. It is the example $(2.19)$ given by Kudla in \cite{sspair} and will be explained in Section \ref{sectionclass}.

There are two types of dual pairs that will be involved. Let $k$ be one of the completions of $L$ and $f_1, \dots, f_{2m}$ a $k$-basis of $X_k$. Then we have an isomorphism between $W_k$ and $k^{4m}$ with the symplectic form
\begin{align}
    \begin{pmatrix}
     0 & A(Q) \\
     -A(Q) & 0
    \end{pmatrix}
\end{align}
where $A(Q)$ in $M_{2m}(k)$ is the symmetric matrix $(A(Q))_{ij}=Q(f_i,f_j)$. In this basis the symplectic group is given by
\begin{align}
\Sp(W_k) = \left \{ g \in \GL_{4m}(k) \left \vert \transp{g} \begin{pmatrix}  & A(Q)\\ -A(Q) & \end{pmatrix}g=\begin{pmatrix}  & A(Q) \\ -A(Q) & \end{pmatrix} \right . \right \}.
\end{align}

\begin{enumerate}
\item We view $X=k^{2m}$ with the quadratic form $A(Q)$. We can embedd $\GL_{2m}(k)$ in $\Sp(W_k)$ by
\begin{align} \label{embgl2gl1}
    \GL_{2m}(k) & \hooklongrightarrow \Sp(W_k) \nonumber \\
    g & \longmapsto \begin{pmatrix} g & \\ & \adjast{g}^{-1} \end{pmatrix},
\end{align}
where $\adjast{g} \coloneqq A(Q)^{-1}\transp{g}A(Q)$ is as above. The centralizer of $\GL_{2m}(k)$ is the center $\GL_1(k)$ of $\GL_{2m}(k)$, and we obtain a dual pair $\GL_{2m} \times \GL_1$ which we call the {\em linear pair}. The restriction of the operator \eqref{weil rep} to $\GL_{2m} \times \GL_1$ defines the Weil representation on $\Scal(X_k)$, given by
\begin{align}
\omega_l(g,t)\varphi(\xbf)=\lvert \det(gt) \rvert^\frac{1}{2}\varphi \left (\adjast{g}t\xbf  \right )
\end{align}
for $(g,t)$ in $\GL_{2m}(k) \times \GL_1(k)$.
\item  We can restrict the embedding \eqref{embgl2gl1} to the orthogonal group of $X_k$
\begin{align}
    \SO(X_k) & \hooklongrightarrow \Sp(W_k) \nonumber \\
    h & \longmapsto \begin{pmatrix}
     h & 0 \\ 0 & h
    \end{pmatrix},
\end{align}
since $\adjast{h}^{-1}=h$. Its centralizer is isomorphic to $\SL_2(k)$, embedded as follows
\begin{align}
    \SL_2(k) & \hooklongrightarrow \Sp(W_k) \nonumber \\
    \begin{pmatrix}
    a & b \\ c & d
    \end{pmatrix} & \longmapsto \begin{pmatrix}
    a &&b& \\
    &a&&b \\
    c&&d& \\
    &c&&d
    \end{pmatrix}.
\end{align}
Hence the pair $\SO(X_k) \times \SL_2(k)$ is the second example of dual pair, that we call \label{dualpairpage} the {\em orthosymplectic pair}. The restriction of the operators defined in \eqref{weil rep} to $\SL_2(k) \times \SO(X_k)$ yields the Weil representation on $\Scal(X_k)$: 
\begin{align} 
\omega_{os}(g,h) \varphi(\xbf) & = \omega_{os}(g,1) \varphi(h^{-1}\xbf), \\
\omega_{os} \left ( \begin{pmatrix}
1 & b \\ & 1
\end{pmatrix},1 \right ) \varphi (\xbf) & \coloneqq \chi_{k} \left (\frac{bQ(\xbf,\xbf) }{2}\right ) \varphi (\xbf),\\
\omega_{os} \left ( \begin{pmatrix}
a &  \\ & a^{-1}
\end{pmatrix},1 \right ) \varphi(\xbf) & \coloneqq \vert a \vert^m \varphi(a\xbf),\\
\omega_{os}\left ( S,1 \right )\varphi (\xbf) & \coloneqq \int_{X_k}\varphi(\ybf) \chi_{k} \left ( -Q(\xbf,\ybf) \right )d\mu(\ybf) \label{weilrepslgspin},
\end{align}
where $S=\begin{pmatrix}
0 & -1 \\ 1 & 0
\end{pmatrix}$ and $(g,h)$ in $\SL_2(k) \times \SO(X_k)$.

\end{enumerate}

Note that the restriction of the projective representation \eqref{weil rep} defines a true representation on $\SL_2(k) \times \SO(X_k)$ since we assumed that $X_L$ is even dimensional. For more on the Weil representation and the theta correspondence we refer to \cite{weil}, \cite{rao}, \cite{lionvergne} and \cite{moeglin}.

\paragraph{Hilbert modular forms and Eisenstein series.}

Let $\Gamma$ be a finite index subgroup of $\SL_2(\Ocal)$. Let $(\tau_1, \dots, \tau_N)$ be a point in $\HH^N\coloneqq \HH \times \cdots \times \HH$. The group $\Gamma$ acts on $\HH^N $ by $\gamma(\tau_1, \dots, \tau_N) =(\gamma_1\tau_N, \dots, \gamma_N\tau_N),
$ where $\gamma_i$ is the image in $\SL_2(\R)$ of $\gamma$ by the real embedding $\sigma_i$. A holomorphic function $f$ on $\HH^N$ is a Hilbert modular form of weight $(k_1, \dots,k_N)$ and of level $\Gamma$ if 
\begin{enumerate}
    \item $f(\gamma_1\tau_N, \dots, \gamma_N\tau_N)=(c_1 \tau_1+d_1)^{k_1} \cdots (c_N \tau_N+d_N)^{k_N} f(\tau_1, \dots, \tau_N)$, where $\gamma_i=\begin{psmallmatrix} a_i & b_i \\ c_i & d_i \end{psmallmatrix}$,
    \item if $F=\Q$ then $f$ is holomorphic at the cusps.
\end{enumerate}
We say that $f$ has parallel weight $k$ if all the $k_i$'s are equal to $k$. If $f$ is a Hilbert modular form then its diagonal restriction $f(\tau, \dots, \tau)$ is a modular form of weight $k_1+ \cdots + k_N$ and of level $\Gamma \cap \SL_2(\Z)$.

Let  $\phi_\sigma$ be the Schwartz function in $\Scal(F_\sigma^2)$ defined by
\[\phi_\sigma (\xbf_\sigma)= (-i)e^{-\pi \lvert z_\sigma \rvert^2} z_\sigma\]
where  $\xbf_\sigma=(x_\sigma,x'_\sigma)$ and $z_\sigma \coloneqq x_\sigma+ix'_\sigma$. Let $\phi=\phi_\8 \otimes \phi_\fin$ in $\Scal(F^2_{\A})$ where $\phi_\8  = \prod_{\sigma} \phi_\sigma$ is in $\Scal(F_\8^2)$ and $\phi_\fin$ is an arbitrary finite Schwartz function in $\Scal(F^2_{\A_\fin})$. For a totally odd finite order Hecke character $\psi$ we define the function 
\begin{align}
Z\left (g, \xbf , \phi,\psi,s \right ) \coloneqq\int_{\A_F^\times} \omega_l(g,t)\phi(\xbf) \psi(t) \lvert t \rvert^{s} dt^\times,
\end{align}
which converges absolutely for $\re(s)>0$. For $\tauud =(\tau_1, \dots, \tau_N)$ in $\HH^N$ let 
\begin{align}
    g_{\tauud}=\begin{pmatrix}
 \sqrt{\underline{v}} & \nicefrac{\underline{u}}{\sqrt{\underline{v}}} \\ 0 & \nicefrac{1}{\sqrt{\underline{v}}}
\end{pmatrix}\in \SL_2(F_\8) \simeq \SL_2(\R)^N
\end{align} where $\tauud =\underline{u}+i\underline{v}$. It is the matrix in $\SL_2(\R)^N$ that sends $(i, \dots,i)$ to $(\tau_1, \dots, \tau_N)$ by Möbius transformation. We define the Eisenstein series
\begin{align} \label{unfold22}
 E(\tau_1, \dots, \tau_N,\phi_\fin,\psi,s) & \coloneqq \sum_{\gamma \in P(F) \backslash \GL_2(F) }  (v_1 \cdots v_N)^{-\frac{1}{2}} Z\left (g_{\tauud},\gamma_0^{-1}\xbf_0, \phi,\psi,s \right ),
\end{align}
where $P(F)$ is the stabilizer of $\xbf_0 \coloneqq \transp{(1,0)}$ and $\gamma_0$ in $\GL_2(F)$ is one of the following representatives for $P(F) \backslash \GL_2(F)$
\begin{align}
    \begin{pmatrix} 1 & 0 \\ 0 & 1 \end{pmatrix} \; \textrm{or} \;\begin{pmatrix}
     \lambda & 1 \\ 1 & 0
    \end{pmatrix} \; \textrm{with} \; \lambda \in F^\times.
\end{align}
The Eisenstein series converges termwise absolutely for $\re(s)>N-1$, see \cite[Lemme p.~106]{wielonsky}. It admits an analytic continuation to the whole plane by  \cite[Proposition.~9]{wielonsky} and we set
\begin{align}
E(\tau_1, \dots, \tau_N,\phi_\fin,\psi) \coloneqq \restr{E(\tau_1, \dots, \tau_N,\phi_\fin,\psi,s)}{s=0}. 
\end{align}Since the Schwartz function decomposes as $\phi= \phi_\8 \otimes \phi_\fin$ we can decompose the integral 
\begin{align}
   Z\left (g_{\tauud},\begin{pmatrix} m \\ n \end{pmatrix}, \phi,\psi,s \right )=Z_\8\left (g_{\tauud},\begin{pmatrix} m \\ n \end{pmatrix}, \phi,\psi,s \right ) Z_\fin\left (1,\begin{pmatrix} m \\ n \end{pmatrix}, \phi,\psi,s \right ).
\end{align} 

\begin{lem} \label{archimedeanZ} We have
\[Z_\8\left (g_{\tauud},\begin{pmatrix} m \\ n \end{pmatrix}, \phi,\psi,s \right )=\frac{\Lambda(1+s)}{\left ( i\pi \right )^N} \frac{(v_1 \cdots v_N)^{\frac{1}{2}+s}}{\N(m-n\tauud)\vert \N(m- n \tauud) \vert^{s}}.\]
\end{lem}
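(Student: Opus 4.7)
Since $\phi_\8 = \bigotimes_\sigma \phi_\sigma$ and the measure on $F_\8^\times \simeq \prod_\sigma \R^\times$ is a product, the integral $Z_\8$ factors as $\prod_\sigma Z_\sigma$, and it suffices to compute each local archimedean factor. At each $\sigma$, the plan is to use the explicit formula $\omega_l(g_{\tau_\sigma}, t_\sigma)\phi_\sigma(\xbf_\sigma) = |t_\sigma|\, \phi_\sigma(\adjast{g_{\tau_\sigma}} t_\sigma \xbf_\sigma)$, valid since $\det g_{\tau_\sigma} = 1$. Computing $\adjast{g_{\tau_\sigma}}$ from the matrix of $Q^0$ and evaluating on $\xbf_\sigma = \transp{(m_\sigma, n_\sigma)}$, a short computation shows that the complex variable $z_\sigma = x_\sigma + ix'_\sigma$ in the Gaussian factor becomes a scalar multiple of $m_\sigma - n_\sigma \tau_\sigma$ (up to conjugation, which is invisible inside $|z_\sigma|^2$), explicitly $z_\sigma = t_\sigma(m_\sigma - n_\sigma \tau_\sigma)/\sqrt{v_\sigma}$; this reflects the classical weight-one Eisenstein denominator $(m-n\tau)^{-1}$.

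Substituting $\phi_\sigma(\xbf) = (-i)e^{-\pi|z|^2}z$, using $\psi_\sigma = \sgn$, and collapsing $t\sgn(t) = |t|$, the local integral reduces to
\[ Z_\sigma \;=\; \frac{-i\,(m_\sigma - n_\sigma\tau_\sigma)}{\sqrt{v_\sigma}} \int_\R |t|^{s+1}\, e^{-\pi a_\sigma t^2}\, dt, \qquad a_\sigma = \frac{|m_\sigma - n_\sigma\tau_\sigma|^2}{v_\sigma}, \]
a one-variable Gaussian which evaluates, via $u = \pi a_\sigma t^2$, to $\Gamma\!\left(\tfrac{s+2}{2}\right) (\pi a_\sigma)^{-(s+2)/2}$.

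Finally, I will take the product over $\sigma$, use the identity $z/|z|^{s+2} = 1/(\bar z\, |z|^s)$ to transfer the numerator factor $m_\sigma - n_\sigma\tau_\sigma$ into the denominator $(m_\sigma - n_\sigma\tau_\sigma)\,|m_\sigma - n_\sigma\tau_\sigma|^s$, collect the $v$-powers into $(v_1\cdots v_N)^{1/2+s}$, and recognize $\Gamma((s+2)/2)^N \pi^{-N(s+2)/2} = \Lambda(1+s)$ together with $(-i)^N = i^{-N}$ to produce the prefactor $\Lambda(1+s)/(i\pi)^N$. The only delicate point in this calculation is the sign/conjugation bookkeeping that selects $\N(m - n\tauud)$ rather than $\N(m+n\tauud)$ or its complex conjugate — this is purely algebraic and has no analytic content; everything else is a routine Gaussian integral.
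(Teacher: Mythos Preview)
Your approach is essentially the same as the paper's: factor over archimedean places, compute the Weil-representation action of $g_{\tau_\sigma}$ on $\phi_\sigma$ to reduce to a one-variable Gaussian in $t_\sigma$, evaluate via the $\Gamma$-integral, and take the product. The only difference is cosmetic --- the paper writes the action as $\phi_\sigma(g_{\tau_\sigma}^{-1} t_\sigma \xbf_\sigma)$ rather than via $\adjast{g_{\tau_\sigma}}$, and thereby computes $z_\sigma = \overline{(m_\sigma - n_\sigma\tau_\sigma)}/\sqrt{v_\sigma}$ explicitly, which resolves the conjugation bookkeeping you correctly flagged as the only delicate point.
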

\begin{proof}
Let $\tau_\sigma=u_\sigma+iv_\sigma$ and $g_{\tau_\sigma}=\begin{pmatrix}
 \sqrt{v_\sigma} & \nicefrac{u_\sigma}{\sqrt{v_\sigma}} \\ 0 & \nicefrac{1}{\sqrt{v_\sigma}}
\end{pmatrix}$, then $g_{\tau_\sigma}^{-1} \begin{pmatrix} m \\ n \end{pmatrix} =\begin{pmatrix} \alpha_\sigma \\ \beta_\sigma \end{pmatrix}$ with $\alpha_\sigma= \frac{m-n u_\sigma}{\sqrt{v_\sigma}}$ and $\beta_\sigma= n \sqrt{v_\sigma} $. Thus
\begin{align}
    \int_{F_\8^\times} \omega_{l}(g_{\tauud})\phi_\8 \begin{pmatrix}mt_\8 \\ nt_\8  \end{pmatrix} \psi_\8(t_\8) \lvert t_\8\rvert^{1+s} dt^\times_\8 = 2^N \prod_{\sigma} \int_{0}^\8 \phi_\sigma\begin{pmatrix} \alpha_\sigma t_\sigma \\ \beta_\sigma t_\sigma \end{pmatrix}  t_\sigma^{1+s} \frac{dt_\sigma}{t_\sigma},
\end{align}
since $\phi_\sigma$ and $\psi_\sigma$ are both odd functions. At the place $\sigma$ we have
\begin{align}
\int_{0}^\8 \phi_\sigma\begin{pmatrix} \alpha_\sigma t_\sigma \\ \beta_\sigma t_\sigma \end{pmatrix}  t_\sigma^{1+s} \frac{dt_\sigma}{t_\sigma}
& = -i z_\sigma\int_{0}^\8 e^{-\pi t_\sigma^2\vert z_\sigma\vert^2} t_\sigma^{2+s} \frac{dt_\sigma}{t_\sigma},
\end{align}
where $z_\sigma=\alpha_\sigma+i\beta_\sigma=\frac{\overline{m-n\tau_\sigma}}{\sqrt{v_\sigma}}$. The right hand side converges for $\re(s)>-2$ to
\begin{align}
& - \frac{iz_\sigma}{2\vert z_\sigma \vert ^{2+s}} \Gamma \left (1+\frac{s}{2} \right )\pi^{-\left (1+\frac{s}{2}\right )}= \frac{1}{2i\pi}\Gamma \left (1+\frac{s}{2} \right )\pi^{-\frac{s}{2}} \frac{v_\sigma^{\frac{1}{2}+s}}{(m-n\tau_\sigma)\vert m-n\tau_\sigma \vert^{s}}.
\end{align}
Hence 
\begin{align}
 Z_\8\left (g_{\tauud},\begin{pmatrix} m \\ n \end{pmatrix}, \phi,\psi,s \right )=\frac{\Lambda(1+s)}{\left ( i\pi \right )^N} \frac{(v_1 \cdots v_N)^{\frac{1}{2}+s}}{\N(m-n\tauud)\vert \N(m- n \tauud) \vert^{s}}.
\end{align}
\end{proof}

%\noindent The computation of the finite parts depends on $\phi_\fin$. For example for $\phi_\fin =\id_{\widehat{\Ocal}^2_F}$ we have
%\[E(\tauud,\id_{\widehat{\Ocal}^2_F},\psi,s)=\vol^\times(\widehat{\Ocal}^\times)\frac{\Lambda(1+s)}{\left ( i\pi \right )^N} \sum_{[\afrak] \in \Cl(F)^+} \psi(\afrak)^{-1}\N(\afrak)^{1+s}\sum_{(m,n) \in \afrak^2/\Ocal_F^\times} \frac{\underline{v}^{s}}{\N(m-n\tauud)\vert \N(m- n \tauud) \vert^{s}}.\]
\begin{prop} \label{proptransfo} For $\gamma =\begin{pmatrix} a & b \\ c & d \end{pmatrix}$ in $\GL_2(F)^+$ and $\phi_\fin$ arbitrary we have
\[E(\gamma \tauud,\phi_\fin,\psi,s)= \vert\det(\gamma)\vert^{\frac{1}{2}} \N(c\tauud+d)\vert \N(c\tauud+d) \vert^{s} E(\tauud,\omega_l(\gamma^{-1})\phi_\fin,\psi,s).\]
In particular, if $\Gamma$ preserves $\phi_\fin$ then $E( \tauud,\phi_\fin,\psi)$ is a Hilbert modular form of parallel weight one and level $\Gamma$.
\end{prop}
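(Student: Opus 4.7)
The plan is to substitute $\tauud\mapsto\gamma\tauud$ directly in the series \eqref{unfold22} and rearrange the summand by combining two ingredients: (i) the explicit archimedean formula of Lemma \ref{archimedeanZ}, which makes the weight-one Möbius transformation visible; and (ii) the $\GL_2(F)$-equivariance of the linear Weil representation $\omega_l$, which allows one to transfer the action of $\gamma$ from the group variable to the Schwartz function. Throughout, the global product formula $\vert\det\gamma\vert_\A=1$ and the Hecke triviality $\psi(\det\gamma)=1$ (both because $\det\gamma\in F^\times$) are the structural identities that make the bookkeeping close.

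Concretely, I would first parametrize $P(F)\backslash\GL_2(F)\simeq F^2\setminus\{0\}$ by $\xi = \gamma_0^{-1}\xbf_0 = (m,n)^T$, and apply Lemma \ref{archimedeanZ} at $g_{\gamma\tauud}$. The algebraic identity
\[
m-n(\gamma\tauud)\;=\;\frac{(mc-na)\tauud + (md-nb)}{c\tauud+d}
\]
together with the transformation $\im(\gamma_\sigma\tau_\sigma) = \det(\gamma_\sigma)v_\sigma/\vert c_\sigma\tau_\sigma+d_\sigma\vert^2$ at each archimedean place rewrite the archimedean factor of $Z_\8(g_{\gamma\tauud},\xi,\phi,\psi,s)$ as a product of the automorphy factor $\N(c\tauud+d)\vert\N(c\tauud+d)\vert^s$, a power of $\N(\det\gamma)$, and the archimedean factor of $Z_\8$ evaluated at $\tauud$ and at $\xi^\ast := \det(\gamma)\gamma^{-1}\xi = (md-nb,\,na-mc)^T$. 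Re-indexing the sum by the bijection $\xi\mapsto\xi^\ast$ on $F^2\setminus\{0\}$ then aligns the archimedean summand with that of $E(\tauud,\cdot,\psi,s)$. In the accompanying finite integral $Z_\fin(1,\xi,\phi_\fin,\psi,s)$, the corresponding change of variable $t\mapsto t/\det(\gamma)$ in the adèlic integration over $\A_{F,\fin}^\times$ converts $\phi_\fin(t(m,n)^T)$ into (a multiple of) $\phi_\fin(t\gamma\xi^\ast)$: the extraneous factors $\psi_\fin(\det\gamma)^{\pm 1}$ and $\vert\det\gamma\vert_\fin^{\pm 1}$ produced along the way are killed, respectively, by the global triviality of $\psi$ on $F^\times$ and by the product formula, and combine with the archimedean $\N(\det\gamma)^s$ into the single prefactor $\vert\det\gamma\vert^{1/2}$ of the statement.

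The main technical obstacle is the discrepancy between the linear action $\gamma^{-1}$ on $\xi$ (forced on us by the algebraic identity at archimedean places) and the adjoint action $\adjast{\gamma}^{-1}$ (prescribed by the Weil representation formula $\omega_l(\gamma^{-1},1)\phi_\fin(\xbf)=\vert\det\gamma^{-1}\vert_\fin^{1/2}\phi_\fin(\adjast{\gamma}^{-1}\xbf)$). These two $\GL_2$-actions differ by conjugation with $A(Q)=\begin{psmallmatrix}0&1\\1&0\end{psmallmatrix}$, and matching them absorbs precisely the remaining factor of $\N(\det\gamma)^{1/2}$; without the global triviality of $\vert\cdot\vert_\A$ and $\psi$ on $F^\times$ there would be no way to reconcile the local contributions. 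Once the transformation law is established, the closing assertion is immediate: for $\gamma\in\Gamma\subset\SL_2(\Ocal)$ with $\omega_l(\gamma^{-1})\phi_\fin=\phi_\fin$ one has $\det\gamma=1$ and $c\tauud+d\in F_\8$, so specializing at $s=0$ yields the parallel weight one cocycle $E(\gamma\tauud,\phi_\fin,\psi)=\N(c\tauud+d)\,E(\tauud,\phi_\fin,\psi)$.
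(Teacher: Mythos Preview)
Your approach is correct and coincides with the paper's: both factor $Z=Z_\8 Z_\fin$, use Lemma~\ref{archimedeanZ} to extract the automorphy factor $\N(c\tauud+d)\lvert\N(c\tauud+d)\rvert^{s}$ at the archimedean places, and re-index the sum via the $\GL_2(F)$-action on $P(F)\backslash\GL_2(F)$. The paper runs the computation in the opposite direction, starting from $E(\tauud,\omega_l(\gamma^{-1})\phi_\fin,\psi,s)$ rather than from $E(\gamma\tauud,\phi_\fin,\psi,s)$, and reads off the finite equivariance $Z_\fin(1,\xi,\omega_l(\gamma^{-1})\phi_\fin)=\lvert\det\gamma\rvert^{-1/2}Z_\fin(1,\gamma\xi,\phi_\fin)$ directly---thereby sidestepping both your change of variable in $t$ and the $\adjast{\gamma}$-versus-$\gamma$ reconciliation you flag as the main obstacle.
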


\section{Kudla-Millson form and special cycles} \label{sectionkm}

\paragraph{The quadratic space.} Let us recall from the introduction the appropriate setting that we want to consider.
Let $F$ be a totally real field of degree $N$ with ring of integers $\Ocal$. Let $X^0_F \coloneqq F^2$ be the $2$-dimensional quadratic $F$-space with the quadratic form $Q^0(\xbf,\ybf)=xy'+x'y$ where $\xbf=(x,x')$ and $\ybf=(y,y')$ are two vectors in $F^2$. It is represented by the symmetric matrix $A(Q^0)=\begin{pmatrix}
0 & 1 \\ 1 & 0
\end{pmatrix}$. Let $\SO(F^2)$ be its orthogonal group over $F$. We have an isomorphism
\begin{align} \label{isoso11}
F^\times& \longrightarrow \SO(F^2) \nonumber \\
t & \longmapsto \begin{pmatrix}
t & 0 \\ 0 & t^{-1}
\end{pmatrix}.
\end{align} Let $X_\Q \coloneqq \Res_{F/\Q} F^2$ be the quadratic space obtained by restriction of scalars. As a vector space it is the $2N$-dimensional $F^2_\Q$ over $\Q$, and the quadratic form is given by $Q \coloneqq \Tr_{F/\Q} \circ \; Q^0$.  At every place $v$ of $\Q$ let $X_{\Q_v} \coloneqq X_\Q \otimes \Q_v = F^2_{\Q_v}$. Using the $\Z$-basis of $\Ocal$ that we fixed we obtain an isomorphism between $F_{\Q_v}^2$ and $\Q_v^{2N}$, equipped with the quadratic form
\begin{align}
A(Q)=\begin{pmatrix} 0 & A \\ A & 0 \end{pmatrix}
\end{align}
where $A= \transp{g_\8}g_\8$ is as in \eqref{Adef} and is positive definite. Let $H(\Q)=\SO(F_\Q^2)$ be the orthogonal group of $X_\Q$, given by
\begin{align}
H(\Q) = \left \{ h \in \SL_{2N}(\Q) \left \vert \transp{h} \begin{pmatrix}  & A \\ A & \end{pmatrix}h=\begin{pmatrix}  & A \\ A & \end{pmatrix} \right . \right \}.
\end{align}

\paragraph{A locally symmetric space.} \label{sectionlss}
Let $X_\R = F^2_\R$ be the real points of $X_\Q$. It is a space of signature $(N,N)$. We fix a $\Z$-basis of $\Ocal$ and use it to identify $F_\R^2$ with $\R^{2N}$. The Lie group $H(\R) \coloneqq \SO(F^2_\R)$ has two connected components and we denote by $H(\R)^+$ the connected component of the identity. Let $\D$ be the space of {\it oriented} negative $N$-planes
\begin{align}
\D \coloneqq & \left \{ z \subset F_\R^2 \, \vert \; z \, \textrm{oriented}, \; \dim(z)=N, \quad \restr{Q}{z} < 0 \right \};
\end{align}
it is an $N^2$-dimensional Riemannian manifold. This space has two connected components that we denote by $\D^+$ and $\D^-$. For an oriented subspace $z$ let $o(z)$ in $\wedge^N z$ be its orientation.

We also consider the basis $\ebf_{1}, \cdots, \ebf_N, \fbf_1, \cdots, \fbf_N$ of $F^2_\R$ where $\ebf_{k} \coloneqq (e_k, e_k)$ and $\fbf_{k} \coloneqq (e_k, -e_k)$, and $e_k$ is the standard unit vector in $\R^N$. Note that $Q(\ebf_k,\ebf_k)$ is positive and $Q(\fbf_k,\fbf_k)$ is negative. We orient  $F^2_\R$ by $\ebf_{1} \wedge \cdots \wedge \ebf_{N} \wedge \fbf_{1} \wedge \cdots \wedge \fbf_{N}$. Let $z_0 \coloneqq \bigl < \fbf_{1}, \dots, \fbf_{N} \bigr >$ be the negative plane oriented by $o(z_0) \coloneqq \fbf_{1} \wedge \cdots \wedge \fbf_{N}$.

The group $H(\R)^+$ acts transitively on $\D^+$ by sending $z_0$ to $hz_0$. Hence we can identify $\D^+$ with $H(\R)^+/K_\8(z_0)$ where $K_\8(z_0)$ is the stabilizer of $z_0$ in $H(\R)^+$ and is isomorphic to $\SO(N) \times \SO(N)$. We can also identify $\D$ with $H(\R)/K_\8(z_0)$. 
 
For a positive vector $\xbf$ in $F_\R^2$ we define the totally geodesic submanifold
\begin{align}
 \D_\xbf \coloneqq \left \{ z \in \D^+ \; \vert \; z \subset \xbf^\perp \right \}.   
\end{align}
and $\D^+_\xbf \coloneqq \D^+ \cap \D_\xbf$. Let $H_\xbf(\R)$ be the stabilizer of $\xbf$ in $H(\R)$ and let $K_\xbf(z_0) \coloneqq H_\xbf^+(\R) \cap K_\8(z_0)$. We then have a diffeomorphism
\begin{align}
    H_\xbf^+(\R)/K_\xbf(z) & \longrightarrow \D^+_\xbf \subset \D^+ \nonumber \\
    hK_U(z) & \longmapsto hz.
\end{align}

\paragraph{Orientations.}
We need to orient the spaces $\D^+$ and $\D_\xbf^+$. We fix orientations 
\begin{align} o(X_{\R}) \coloneqq \ebf_{1} \wedge \cdots \wedge \ebf_{q} \wedge \ebf_{p+1} \wedge \cdots \wedge \ebf_{p+q}, \qquad o(z_0) \coloneqq \ebf_{p+1} \wedge \cdots \wedge \ebf_{p+q} \end{align}
of $X_{\R}$ and of $z_0$, let us explain how this yields an orientation on $\D^+$. If $z$ is a negative plane, we write $z=h_zz_0$ for some $h_z$ in $H(\R)^+$. Let $\ebf_k(z)=h_z\ebf_k$ so that $z=\Span \bigl < \ebf_{p+1}(z), \cdots , \ebf_{p+q}(z)  \bigr >$ and the orientation of $z$ is $\ebf_{p+1}(z) \wedge \cdots \wedge \ebf_{p+q}(z)$. We can identify $T_z\D^+$ with $z^\vee \otimes z^\perp$, so that we have to orient $z^\vee \otimes z^\perp$. First we orient $z^\perp$ by the rule that $o(z^\perp) \wedge o(z)=o(X_{\R})$. Then we orient $z^\vee$ by $\ebf^\vee_{p+1}(z) \wedge \cdots \wedge \ebf^\vee_{p+q}(z)$. Finally, we need to orient the tensor product. If $V$ and $W$ are two vectors spaces with orientations $v_1 \wedge  \cdots \wedge v_N$ and $w_1 \wedge  \cdots \wedge w_M$ then the we orient the basis $v_i \otimes w_j$ with the lexicographic order from right to left. 
\begin{ex}
If $o(V)=v_1 \wedge v_2$ and $W=w_1 \wedge w_2$ then $o(V \otimes W) = (v_1 \otimes w_1) \wedge (v_2 \otimes w_1) \wedge (v_1 \otimes w_2) \wedge (v_2 \otimes w_2)$.
\end{ex}
Let us now orient $\D_\xbf^+$, given a positive vector $\xbf$. We have an isomorphism between $T_z\D_\xbf^+$ and $z^{\vee} \otimes (z^\perp \cap \xbf^\perp)$. With the Riemannian metric on $\D^+$ the normal bundle is $N_z\D_\xbf^+ = z^{\vee} \otimes \xbf$, which can be identified with $z^{\vee}$.
The space $N_z\D_\xbf^+$ is oriented by $z^{\vee}$ and we orient $T_z\D_\xbf^+$ by the rule $o(T_z\D_\xbf^+) \wedge o(N_z\D_\xbf^+)=o(T_z\D^+)$.

\paragraph{Adelic spaces.}
Let $K \coloneqq K_\8 K_\fin$ where $K_\fin$ is an open compact subgroup preserving a Schwartz function $\varphi_\fin$ in $\Scal(F^2_{\A_\fin})$ and $K_\8=K_\8(z_0)$ the maximal compact subgroup of $H(\R)^{+}$ stabilizing $z_0$. We define the double coset space
\begin{align}M_{K} \coloneqq H(\Q) \backslash H(\A)/K \simeq H(\Q) \backslash \D \times H(\A_\fin)/K_\fin,
\end{align}
where the second isomorphism sends $H(\Q)(h_\8,h_\fin)K$ to $H(\Q)( h_\8 z_0,h_\fin)K $. There exists elements $h_1, \dots, h_r$ in $H(\A_\fin)$ such that
\begin{align} \label{doublecosetH}
 H(\A_\fin)= \bigsqcup_{i=1}^r H(\Q)^+ h_i K_\fin.
\end{align} Let $\Gamma'_{h_i} \coloneqq H(\Q)^+ \cap h_i K_\fin h_i^{-1}$ and $\Gamma_{h_i}$ its image in $H^{\ad}(\Q)^+$, where the latter is the intersection of $H(\Q)$ with $H^{\ad}(\R)^+ \coloneqq H(\R)^+/Z_H(\R)$. We have a homeomorphism
\begin{align} \label{adeless}
M_{K} \simeq \bigsqcup_{i=1}^r M_{h_i},
\end{align}
where $M_h$ is the locally symmetric space $M_h \coloneqq \Gamma_{h} \backslash \D^+
$.
The map goes as follows. Let $H(\Q)(z,h_{\fin})K$ be a double coset. Let us define $\delta_z$ to be $1$ if $z$ is in $\D^+$ and $\delta_z$ is any element in $H(\Q)-H(\Q)^+$, if $z$ is in $\D^-$. The element $\delta_z$ permutes $\D^+$ and $\D^-$, hence $H(\Q)(z,h_{\fin})K=H(\Q)(\delta_z z,\delta_z h_{\fin})K$ 
with $\delta_z z$ in $\D^+$. Write $\delta_z h_{\fin}= h^{-1} h_i k_{\fin}$ for some $k_{\fin}$ in $K_\fin$ and $h$ in $H(\Q)^+$ and $i$ in $\{1, \dots, r\}$. Then this coset is mapped to the point $\Gamma_{h_i} h \delta_z z$.

%\begin{rmk} The decomposition \eqref{doublecosetH} holds for an arbitrary algebraic group $H$. When $K_\fin=H(\widehat{\Z})$, then the integer $\cl(H) \coloneqq r$ is called the class number of $H$. When $H$ is the orthogonal group of a quadratic form $Q$ defined over $\Z$, one can show \cite[p.~450]{pr} that the class number of $H$ equals the number of classes in the genus of $Q$. In particular we have $r>1$ in general. However the assumption $r=1$ is still reasonable. For example in the special case that we will consider in Subsection \ref{sectionrealfield} we will have $r=1$. This is due to a special isomorphism between $H$ and $\SL_2 \times \SL_2$, and to the strong approximation of $\SL_2$.
%\end{rmk}

\paragraph{Special cycles.} For $h_i$ in $H(\A_\fin)$ and $\xbf$ a vector in $F_\Q^2$ we define the {\it connected cycles} $C_{\xbf}(h_i)$ in $M_{h_i}$ to be the image of the composition
\begin{align}
    \Gamma_{h_i,\xbf} \backslash \D^+_{\xbf} \hooklongrightarrow \Gamma_{h_i,\xbf} \backslash \D^+  \longrightarrow  M_{h_i}
\end{align}
where $\Gamma_{h_i,\xbf} \coloneqq H_{\xbf}(\Q)^+ \cap h_i K h_i^{-1}$ and the second map is the natural projection. Note that $\Gamma_{h_i,\xbf}$ does not contain $-1$ so its image in $H^{\ad}(\Q)^+$ is $\Gamma_{h_i,\xbf}$. For a positive rational number $n$ we define the {\it weighted cycles} 
\begin{align}
C_n(\varphi_\fin,h_i) \coloneqq & \sum_{\substack{\xbf \in  \Gamma'_{h_i} \backslash F_\Q^2 \\ Q(\xbf,\xbf)=2n}} \varphi_{\fin}(h_i^{-1}\xbf) C_{\xbf}(h_i) \in \Zcal_{N^2-N}(\overline{M_{h_i}}, \partial \overline{M_{h_i}}; \R) \nonumber \\  
C_{n}(\varphi_{\fin}) \coloneqq & \sum_{i=1}^r  C_{n}(\varphi_\fin,h_i) \in \Zcal_{N^2-N}(\overline{M_K}, \partial \overline{M_K}; \R),
\end{align}
where both sums are finite.

\paragraph{Cohomology of $M_K$.} We can identify $\Omega^\bullet(M_h)$ with $\Omega^\bullet(\D^+)^{\Gamma_h}$, where the latter is the space of $\Gamma_h$-invariant forms on $\D^+$. We use the isomorphism \eqref{adeless} to define the space $\Omega^\bullet(M_{K}) \coloneqq \bigoplus_{i=1}^r \Omega^\bullet(M_{h_i})$ of differential forms on $M_K$ and the cohomology $\cohom^\bullet(M_{K};\R) \coloneqq \bigoplus_{i=1}^r \cohom^\bullet(M_{h_i};\R)$ of $M_K$. Similarly we define the homology of $M_K$ by $\cohom_\bullet(M_{K};\R) \coloneqq \bigoplus_{i=1}^r \cohom_\bullet(M_{h_i};\R)$. Let $C^\8(H(\A_\fin))$ be the space of smooth ({\em i.e. }locally constant) functions on $H(\A_\fin)$. Let $\g$ and $\kfrak$ be the Lie algebras of $H(\R)$ and $K_\8$. Let $\g=\p+\kfrak$ be a decomposition that is orthogonal with respect to the Killing form. We can also see $\Omega^\bullet (M_K)$ as
\begin{align} \label{XKform2}
    \Omega^\bullet(M_K) & \simeq \left [ \Omega^\bullet (\D^+) \otimes_{\Q} C^\8(H(\A_\fin))\right ]^{H(\Q)\times K_\fin} \nonumber \\
    & \simeq \left [  C^\8(H(\Q) \backslash H(\A)) \otimes_{\R} \sideset{}{^\bullet}\bigwedge \p^\ast \right ]^{K},
\end{align}
where $\p^\ast=\Hom(\p,\R)$. The first isomorphism is
\begin{align}
\omega \otimes f \longmapsto \sum_{i=1}^r  f(h_i)\omega \in \bigoplus_{i=1}^r \Omega^\bullet(\D^+)^{\Gamma_{h_i}}
\end{align}
and the second ones comes from the isomorphism
\begin{align}
    \Omega^\bullet(\D^+) \simeq \left [C^\8(H(\R)^+) \otimes_{\R} \sideset{}{^\bullet} \bigwedge \p^\ast \right ]^{K_\8}.
\end{align}

\paragraph{The Kudla-Millson form.}
The Kudla-Millson form
\[\varphi_{KM} \in \Omega^N(\D^+,\Scal(F_\R^2))^{H(\R)^+} \simeq \left [ \Omega^N(\D^+) \otimes \Scal(F_\R^2)\right]^{H(\R)^+} \] is an $N$-form valued in a Schwartz space that satisfies the following properties:
\begin{enumerate} \label{properties}
    \item $\varphi_{KM}(\xbf)$ is closed for any $\xbf$ in $F_\R^2$,
    \item $\varphi_{KM}$ is $H(\R)^+$-invariant in the sense that 
    \begin{align}
        h^\ast \varphi_{KM}(\xbf)=\varphi_{KM}( h^{-1}\xbf )=\omega_{os}(h)\varphi_{KM}(\xbf).
    \end{align}
    In particular it is $\Gamma_{\xbf}$-invariant, where $\Gamma_{\xbf}$ is the stabilizer of $\xbf$ in $\Gamma$. Hence we can view $\varphi_{KM}(\xbf)$ as a form on $\Gamma_\xbf \backslash \D^+$.
    \item Moreover, for any positive vector $\xbf$ in $F^2_\R$ and $\omega$ in $\Omega^{N^2-N}_c(\Gamma_\xbf \backslash \D^+)$ a compactly supported form, we have
    \[\int_{\Gamma_\xbf \backslash \D^+}\varphi_{KM}(\xbf) \wedge \omega = e^{-\pi Q(\xbf,\xbf)}\int_{\Gamma_\xbf \backslash \D_{\xbf}^+} \omega.\]
\end{enumerate}
The last property means that the form
\begin{align}
\varphi^0(\xbf) \coloneqq e^{\pi Q(\xbf,\xbf)}\varphi_{KM}(\xbf) \in \Omega^N(\Gamma_\xbf \backslash \D^+)
\end{align}
represents the Poincaré dual of $\Gamma_\xbf \backslash \D^+_\xbf$ in $\Gamma_\xbf \backslash \D^+$.
\begin{rmk}
The Kudla-Millson defined here is the original Kudla-Millson multiplied by $2^\frac{N}{2}$.
\end{rmk}
\paragraph{The Mathai-Quillen formalism.} We briefly recall how to recover the Kudla-Millson form from the Mathai-Quillen formalism, see \cite{rbr} for more details. This will simplify the explicit computations involving the Kudla-Millson form.

We have an orthogonal decomposition of $F^2_\R$ as $z_0^\perp \oplus z_0$ where $z_0$ is a before and $z_0^\perp$ is a positive $N$-plane. We identify $z_0$ with $F_\R = \R^N$ by choosing a basis and consider the metric vector bundle 
\begin{align}
    E \coloneqq H(\R)^+ \times_{K_\8(z_0)} \R^N,
\end{align} which is a rank $N$ bundle over $\D^+$. It consists of equivalence classes $[h,w]$ where $h$ in $H(\R)^+$ and $w$ is a vector in $\R^N$, and the equivalence relation is $[h,w]=[hk,k^{-1}w]$ for all $k$ in $K_\8(z_0)$. The metric is given by $-\restr{Q}{{z_0}}$, which is positive definite. The group $\Gamma_\xbf$ also acts on $E$ by $\gamma[h,w]=[\gamma h,w]$ and we have a vector bundle $\Gamma_\xbf \backslash E$ over $\Gamma_\xbf \backslash \D^+$. Let $E_0$ denote the image of the zero section. By integration along the fibers we obtain the Thom isomorphism
\begin{align}
    \cohom^{k+N}(\Gamma_\xbf \backslash E,\Gamma_\xbf \backslash (E - E_0)) & \longrightarrow \cohom^k(\Gamma_\xbf \backslash \D^+),
 \end{align}
where $\cohom^{k+N}(\Gamma_\xbf \backslash E,\Gamma_\xbf \backslash (E- E_0))$ is the cohomology with compact vertical support. For $k=0$ we call the preimage of $1$ in $\cohom^{N}(\Gamma_\xbf \backslash E,\Gamma_\xbf \backslash (E - E_0))$ the {\it Thom class} of the bundle $\Gamma_\xbf \backslash E$. A {\it Thom form} is any representative of the Thom class. In \cite{mq} Mathai and Quillen construct a canonical Thom form $U_{MQ}(E)$ in $\Omega^N(E)$, that is $H(\R)^+$-invariant (in particular $\Gamma_\xbf$-invariant) and represent the Thom class. 
Let $\pr$ be the orthogonal projection from $F^2_\R$ onto $z_0 \simeq \R^N$. Consider the section
\begin{align}
    s_\xbf \colon \D^+ & \longrightarrow E \nonumber \\
    z & \longmapsto [h_z, \pr(h_z^{-1}\xbf )]
\end{align}
where $h_z$ is any element in $H(\R)^+$ sending $z_0$ to $z$. Its zero locus is precisely $\D_\xbf^+$ by \cite[Proposition.~4.1]{rbr}  and 
\begin{align}
    \varphi_{KM}(\xbf) =e^{-\pi Q(\xbf,\xbf) } s_{\xbf}^\ast U_{MQ}(E).
\end{align}

\paragraph{The Kudla-Millson theta series.}
We define the theta series
\begin{align}\Theta_{os}(g,h_\fin,\varphi_{KM}\otimes\varphi_\fin) & \coloneqq \sum_{\xbf \in X_{\Q}} \omega_{os}(g,h_\fin) \varphi_{KM}(\xbf)\varphi_\fin(\xbf) \nonumber \\
& = \sum_{\xbf \in X_{\Q}} \omega_{os}(g) \varphi_{KM}(\xbf)\varphi_\fin(h_\fin^{-1}\xbf ) \in \Omega^N(\D^+)
\end{align}
for $g$ in $\SL_2(\A)$ and $h_\fin$ in $H(\A_\fin)$. Note that it converges since the Kudla-Millson form is rapidly decreasing. The form $\Theta_{os}(g,h_i,\varphi_\fin)$ is $\Gamma_{h_i}$-invariant, hence defines a form on $M_{h_i}$ and we obtain a form
\begin{align}
    \Theta_{os}(g,\varphi_{KM}\otimes\varphi_\fin) & \coloneqq \sum_{i=1}^r \Theta_{os}(g,h_i,\varphi_{KM}\otimes\varphi_\fin) \in \Omega^N(M_K).
\end{align}
For $\tau=u+iv$ in $\HH$ we define the classical Kudla-Millson theta series
\begin{align}\label{firstdef} \Theta_{KM}(\tau,h_\fin,\varphi_\fin) & \coloneqq v^{-\frac{N}{2}} \Theta_{os}(g_\tau,h_\fin,\varphi_{KM} \otimes \varphi_\fin)
\end{align}
where $g_\tau=\begin{pmatrix} \sqrt{v} & \nicefrac{u}{\sqrt{v}} \\ 0 & \nicefrac{1}{\sqrt{v}} \end{pmatrix}$ is a matrix in $\SL_2(\R)$ that maps $i$ to $\tau$ by Möbius transformations. 
\begin{rmk} We have $\omega_{os}(k_\theta) \varphi_{KM}=e^{i\theta N}$ for 
    \begin{align}
     k_\theta = \begin{pmatrix}
    \cos(\theta) & \sin(\theta) \\
    -\sin(\theta) & \cos(\theta)
    \end{pmatrix} \in \SO(2).   
    \end{align}
Since $\omega_{os}$ is a representation, one can check that \eqref{firstdef} does not depend on the choice of the matrix $g_\tau$ sending $i$ to $\tau$.

\end{rmk}

By summing over the different connected components we then also get a closed form
\begin{align}
 \Theta_{KM}(\tau,\varphi_\fin) \coloneqq \sum_{i=1}^r \Theta_{KM}(\tau,h_i,\varphi_\fin) \in \Omega^N(M_{K}).
\end{align}

\begin{lem} \label{fourierinter} We can rewrite the theta series as
\begin{align} 
     \Theta_{KM}(\tau,\varphi_\fin) & = \Theta_0(v,\varphi_\fin) +  \sum_{n \in \Q^\times} \Theta_n(v,\varphi_\fin) e^{2 i \pi n \tau} 
\end{align}
where
\begin{align}
    \Theta_n(v,\varphi_\fin) = \sum_{i=1}^r\sum_{\substack{\xbf \in X_{\Q} \\ Q(\xbf,\xbf)=2n}} \varphi_\fin(h_i^{-1}\xbf) \varphi^0(\sqrt{v}\xbf).
\end{align}
\end{lem}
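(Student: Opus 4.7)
The plan is to compute $\omega_{os}(g_\tau)\varphi_{KM}(\xbf)$ explicitly using the Weil representation formulas for upper triangular matrices, and then group terms by the value of $Q(\xbf,\xbf)/2$.

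First I would decompose
\[
g_\tau = \begin{pmatrix} 1 & u \\ 0 & 1 \end{pmatrix} \begin{pmatrix} \sqrt{v} & 0 \\ 0 & 1/\sqrt{v} \end{pmatrix}
\]
and apply the Weil representation to $\varphi_{KM}$. Using the formulas for $\omega_{os}$ on unipotent and diagonal matrices (with $m = N$ since $\dim X_\Q = 2N$), together with $\chi_\R(x) = e^{2\pi i x}$,
\[
\omega_{os}(g_\tau)\varphi_{KM}(\xbf) = v^{N/2}\, e^{\pi i u Q(\xbf,\xbf)}\, \varphi_{KM}(\sqrt{v}\,\xbf).
\]

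Next, by the definition $\varphi^0(\xbf) = e^{\pi Q(\xbf,\xbf)}\varphi_{KM}(\xbf)$, I have $\varphi_{KM}(\sqrt{v}\,\xbf) = e^{-\pi v Q(\xbf,\xbf)} \varphi^0(\sqrt{v}\,\xbf)$. Combining the exponentials gives
\[
\omega_{os}(g_\tau)\varphi_{KM}(\xbf) = v^{N/2}\, e^{\pi i \tau\, Q(\xbf,\xbf)}\, \varphi^0(\sqrt{v}\,\xbf) = v^{N/2}\, e^{2\pi i n \tau}\, \varphi^0(\sqrt{v}\,\xbf)
\]
where $n \coloneqq Q(\xbf,\xbf)/2$.

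Plugging this into the definition $\Theta_{KM}(\tau,h_\fin,\varphi_\fin) = v^{-N/2}\,\Theta_{os}(g_\tau,h_\fin,\varphi_{KM}\otimes\varphi_\fin)$, the $v^{-N/2}$ and $v^{N/2}$ factors cancel, yielding
\[
\Theta_{KM}(\tau,h_\fin,\varphi_\fin) = \sum_{\xbf \in X_\Q} \varphi^0(\sqrt{v}\,\xbf)\, \varphi_\fin(h_\fin^{-1}\xbf)\, e^{2\pi i n \tau}.
\]
Grouping the sum over $\xbf$ according to the value $n = Q(\xbf,\xbf)/2 \in \Q$ (separating $n=0$) and then summing over the connected components indexed by $h_1,\dots,h_r$ produces the claimed Fourier expansion with coefficients $\Theta_n(v,\varphi_\fin)$. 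There is no real obstacle here: the statement is essentially a bookkeeping exercise showing that the archimedean part of the Weil representation at $g_\tau$ converts the Gaussian factor hidden in $\varphi_{KM}$ into the holomorphic factor $e^{2\pi i n \tau}$, leaving the Schwartz form $\varphi^0(\sqrt{v}\,\xbf)$ as the Fourier coefficient.
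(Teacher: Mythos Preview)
Your proof is correct and follows essentially the same approach as the paper: both compute $v^{-N/2}\omega_{os}(g_\tau)\varphi_{KM}(\xbf)=\varphi^0(\sqrt{v}\xbf)e^{i\pi\tau Q(\xbf,\xbf)}$ via the Weil representation formulas and then group terms by $n=Q(\xbf,\xbf)/2$. You are simply a bit more explicit about the Iwasawa decomposition of $g_\tau$ and the way the exponentials combine, but the argument is the same.
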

\begin{proof}

It follows from the formulas \eqref{weilrepslgspin} for the Weil representation that
\begin{align}
    v^{-\frac{N}{2}}\omega_{os}(g_\tau) \varphi_{KM}(\xbf)=\varphi^0(\sqrt{v}\xbf)e^{i\pi \tau Q(\xbf,\xbf)}
\end{align}
where $\varphi^0(\xbf)=e^{\pi Q(\xbf,\xbf)}\varphi_{KM}(\xbf)$. Thus 
\begin{align} \label{suminproof}
    \Theta_{KM}(\tau,\varphi_\fin) & = \sum_{i=1}^r\sum_{ \xbf \in X_{\Q}} \varphi_\fin(h_i^{-1}\xbf) \varphi^0(\sqrt{v}\xbf)e^{i\pi \tau Q(\xbf,\xbf)}.
\end{align}
By summing over the vectors of same length we can rewrite \eqref{suminproof} as
\begin{align} \label{suminproof2}
    \Theta_{KM}(\tau,\varphi_\fin) & = \sum_{n \in \Q} \left (\sum_{i=1}^r\sum_{ \substack{\xbf \in X_{\Q} \\ Q(\xbf,\xbf)=2n}} \varphi_\fin(h_i^{-1}\xbf) \varphi^0(\sqrt{v}\xbf) \right )e^{2 i \pi n \tau} \nonumber \\
    & = \sum_{n \in \Q} \Theta_n(v,\varphi_\fin) e^{2 i \pi n \tau}.
\end{align}
\end{proof}

\begin{thm}[Kudla-Millson] \label{kmthm}
Let $C$ in $\Zcal_N(M_K;\R)$ be an $N$-cycle in $M_K$ and write the cycle as $\sum C_i$ where every $C_i$ is an $N$-cycle in $M_{h_i}$. Then
\begin{align} \label{kmlift} \int_{C} \Theta_{KM}(\tau, \varphi_\fin)=\sum_{i=1}^r \int_{C_i} \Theta_{KM}(\tau, h_i, \varphi_\fin)
\end{align}
is a {\em holomorphic} modular form of weight $N$. Moreover, it admits the Fourier expansion
\[\int_{C} \Theta_{KM}(\tau, \varphi_\fin)=\int_{C} \Theta_0(v,\varphi_\fin) + \sum_{n \in \Q_{ > 0}} \int_{C} \Theta_n(v,\varphi_\fin) e^{2 i \pi n \tau}\]
and for $n$ positive we have
\begin{align}
    \int_C \Theta_n(v,\varphi_\fin)= \kappa \bigl <C_n(\varphi_\fin),C  \bigr >_{M_K}.
\end{align}
\end{thm}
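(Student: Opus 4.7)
The theorem bundles three claims: weight-$N$ modularity, holomorphy, and the intersection-number interpretation of positive Fourier coefficients. I would follow the classical Kudla-Millson strategy. Modularity is obtained from the adelic formulation: $\Theta_{os}(g,h_\fin,\varphi_{KM}\otimes\varphi_\fin)$ is automorphic in $g\in\SL_2(\Q)\backslash\SL_2(\A)$ by Poisson summation for the Weil representation. The $K_\8$-weight $N$ of $\varphi_{KM}$, visible from the identity $\omega_{os}(k_\theta)\varphi_{KM}=e^{iN\theta}\varphi_{KM}$ recorded in the remark following \eqref{firstdef}, together with the $v^{-N/2}$ normalization, forces $\Theta_{KM}(\tau,\varphi_\fin)$ to transform as a modular form of weight $N$ on the congruence subgroup of $\SL_2(\Z)$ fixing $\varphi_\fin$ through the Weil representation.

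Holomorphy is the substantive step. I would invoke the Kudla-Millson identity $\bar\partial_\tau \Theta_{KM}(\tau,\varphi_\fin) = d_{M_K}\Theta'(\tau,\varphi_\fin)$, which comes from a direct computation with the Mathai-Quillen representative of $\varphi_{KM}$ and exhibits the non-holomorphic part of the theta series as exact on $M_K$. Pairing against a compact cycle $C$ and applying Stokes gives
\begin{equation*}
\frac{\partial}{\partial\bar\tau}\int_C \Theta_{KM}(\tau,\varphi_\fin) \; = \; \int_C d_{M_K}\Theta'(\tau,\varphi_\fin) \; = \; \int_{\partial C}\Theta'(\tau,\varphi_\fin)=0,
\end{equation*}
so the lift is holomorphic. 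Combined with Lemma \ref{fourierinter} applied termwise under the integral (justified by compactness of $C$ and rapid decay of $\varphi^0$ in $\xbf$), this yields the stated Fourier expansion, with the negative Fourier coefficients forced to vanish.

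For the identification of positive Fourier coefficients, expand
\begin{equation*}
\int_C \Theta_n(v,\varphi_\fin) \; = \; \sum_{i=1}^r \sum_{\substack{\xbf\in X_\Q \\ Q(\xbf,\xbf)=2n}}\varphi_\fin(h_i^{-1}\xbf)\int_{C_i}\varphi^0(\sqrt{v}\xbf),
\end{equation*}
and group the inner sum into $\Gamma'_{h_i}$-orbits of $\xbf$. Property $(3)$ of $\varphi_{KM}$ says that $\varphi^0(\sqrt{v}\xbf)$ on $\Gamma_{h_i,\xbf}\backslash \D^+$ is a Poincaré dual to $\Gamma_{h_i,\xbf}\backslash \D^+_\xbf$; the orbit sum descends it to a Poincaré dual on $M_{h_i}$ of the immersed cycle $C_\xbf(h_i)$. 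Integrating against the compact cycle $C_i$ produces the topological intersection number $\bigl < C_\xbf(h_i), C_i \bigr >_{M_{h_i}}$, which after re-summing is $\bigl < C_n(\varphi_\fin,h_i), C_i \bigr >_{M_{h_i}}$. The factor $\kappa$ accounts for the $\xbf\leftrightarrow -\xbf$ symmetry: when $-1\in K_\fin \cap H(\Q)^+$, the vectors $\xbf$ and $-\xbf$ lie in the same $\Gamma'_{h_i}$-orbit but both contribute to the unrestricted sum defining $\Theta_n$, giving $\kappa=2$; otherwise $C_\xbf(h_i)$ and $C_{-\xbf}(h_i)$ appear as distinct summands of $C_n(\varphi_\fin)$ and $\kappa=1$.

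The hard part is the holomorphy identity $\bar\partial_\tau \Theta_{KM} = d_{M_K}\Theta'$: it is not formal, and is the original input of Kudla-Millson, requiring both the superconnection/Mathai-Quillen description of $\varphi_{KM}$ and an explicit comparison between the action of the raising-lowering operators on the $\SL_2$-side and the differential $d_{M_K}$. Everything else — convergence, exchanging sum and integral, pushing Poincaré duals down via orbit sums, and the $\pm 1$ bookkeeping giving $\kappa$ — is routine.
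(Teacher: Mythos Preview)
The paper does not actually prove this theorem: it is stated as a result of Kudla--Millson, with the parenthetical ``In particular, they show the holomorphicity (in $\tau$) of the resulting function and the vanishing of the Fourier coefficients if $n$ is negative'' immediately following the statement, and no proof is given. So there is nothing to compare your argument against in the paper itself.

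That said, your sketch is a faithful outline of the original Kudla--Millson argument. The modularity via the Weil representation and the $K_\8$-type of $\varphi_{KM}$ is standard; the identity $\bar\partial_\tau \Theta_{KM} = d_{M_K}\Theta'$ (with $\Theta'$ built from the Kudla--Millson form $\psi_{KM}$ one degree lower) is indeed the substantive input for holomorphy, and you correctly flag it as non-formal. Your treatment of the positive Fourier coefficients via orbit sums and the Thom/Poincar\'e-dual property of $\varphi^0$ is exactly how the paper itself later handles the analogous computation for its non-compact cycle $C\otimes\psi$ in Proposition~\ref{intertop}; the $\kappa$ bookkeeping you describe matches Remark~\ref{rmkkappa}. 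One small imprecision: in your orbit-sum step you should unfold the integral over $C_i$ to an integral over a lift in $\D^+$ against a fundamental domain for $\Gamma_{h_i}$, not directly invoke Poincar\'e duality on $M_{h_i}$, since the individual $\varphi^0(\sqrt{v}\xbf)$ are only $\Gamma_{h_i,\xbf}$-invariant rather than $\Gamma_{h_i}$-invariant; the orbit sum is what produces a $\Gamma_{h_i}$-invariant form. But this is a matter of phrasing rather than a gap.
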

\noindent In particular, they show the holomorphicity (in $\tau$) of the resulting function and the vanishing of the Fourier coefficients vanishes if $n$ is negative. In the next section we will consider the integral of $\Theta_{KM}(\tau,\varphi_\fin)$ over a relative cycle $C\otimes \psi$ in $\Zcal_N(\overline{M_K},\partial \overline{M_K};\R)$ which is twisted by a totally odd unitary Hecke character $\psi$. In particular since $C\otimes \psi$  will be non-compact the result of Kudla-Millson does not apply immediately. Using a seesaw argument, we will show that 
\begin{align}
    \int_{C \otimes \psi} \Theta_{KM}(\tau,\varphi_\fin)
\end{align}
equals the diagonal restriction of an Eisenstein series, which will prove in particular that it is holomorphic. Although the vanishing of the negative Fourier coefficients also follows from this equality, we show it directly in Proposition \ref{Ifourier}. Moreover, we will show in Proposition \ref{intertop} that the Fourier coefficients can be interpreted as intersection numbers. The proof of the proposition, and the Remark \ref{rmkkappa} before it also explains the appearance of the factor $\kappa$ in Theorem \ref{kmthm}.

\begin{rmk}\label{rmkproblem}  If $C$ and $C'$ are immersed submanifolds, then
\begin{align} \label{interinteg2}
    \int_{C}\PD(C')=\int_{M_K} \PD(C') \wedge \PD(C) = \bigl < C',C \bigr >_{M_K},
\end{align}
where the right hand side is the topological intersection numbers. This works when at least one of the two cycles is compact. If the two cycles $C$ and $C'$ are non-compact and intersect infinitely many times, then the integrals \eqref{interinteg2} do not converge and the intersection number is not well defined. On the other hand, if $C$ and $C'$ intersect finitely many times in $M_K$ then $\bigl < C,C' \bigr >_{M_K}$ is well-defined and the integrals converge. However it does not mean that the equality \eqref{interinteg2} holds. Indeed, to make sense of \eqref{interinteg2} for non-compact cycles one would need to study the extension of the forms to the boundary of a compactification as well as the intersections of the cycles in that boundary, as it is done in \cite{funkmil} for example.

The special cycle $C_n(\varphi_\fin)$ is an immersed submanifold of codimension $N$, and for positive $n$ the form $\kappa^{-1}\Theta_n(v,\varphi_\fin)$ represents the Poincaré dual of $C_n(\varphi_\fin)$ in $\Omega^N(M_{h_i})$. If $C$ is compact we get
\begin{align} \label{someeq}
    \int_C \Theta_n(v,\varphi_\fin)=  \kappa \bigl < C_n(\varphi_\fin),C \bigr >_{M_K}.
\end{align}
 In our case we will replace $C$ by a non-compact cycle $C \otimes \psi$ that intersects $C_n(\varphi_\fin)$ in a compact set. We will show in Proposition \ref{intertop} that \eqref{someeq} also holds for $C \otimes \psi$.

\end{rmk}

\section{Integral of $\Theta_{KM}$ over a relative class $C\otimes \psi$ } \label{sectionclass}

We will now define the relative cycle $C\otimes \psi$ and compute $\int_{C \otimes \psi} \Theta_{KM}(\tau,\varphi_\fin)$.

\paragraph{The seesaw.} Let $W^0_F=X^0_F \oplus X^0_F$ be the $4$-dimensional symplectic $F$-space as on Page \pageref{locweil}, and let $W_\Q= \Res_{F/\Q} W^0_F$ be the restriction of scalars. Then $W_\Q=X_\Q \oplus X_\Q$ is a $4N$-dimensional symplectic $\Q$-space. Let $F_w^2$ be the completion of the quadratic space at a place $w$ of $F$. The isomorphism $\varsigma_v$ between $F_{\Q_v}$ and $F_v$ from \eqref{varsigmaiso} induces an isomorphism of quadratic $\Q_v$-spaces
\begin{align}
   F_v^2 \coloneqq \bigoplus_{w \mid v} F_w^2 \simeq F_{\Q_v}^2.
\end{align}
Hence we obtain a natural embedding of $\SO(F^2_v)$ in $H(\Q_v).$ We compose this embedding with the isomorphism \eqref{isoso11} to get 
\begin{align}
h \colon F^\times_v \hooklongrightarrow H(\Q_v),
\end{align}
that we will describe more concretely. First note that we can embed $\GL_N(\Q_v)$ in $H(\Q_v)$ by
\begin{align}
   \GL_N(\Q_v) & \hooklongrightarrow H(\Q_v) \nonumber \\
    M & \longmapsto \begin{pmatrix}
     \adjhash{M} & 0 \\ 0 & M
    \end{pmatrix}
\end{align}
where $\adjhash{M} \coloneqq A^{-1}\transp{M}^{-1}A$. The embedding $h$ is obtained by composing it with the embedding $\gamma$ of $F_v^\times$ in $\GL_N(\Q_v)$:
\begin{align} \label{embH0}
    h \colon F_v^\times & \hooklongrightarrow H(\Q_v) \nonumber \\
    t_v & \longmapsto \begin{pmatrix}
     \adjhash{\gamma(t_v)} & 0 \\[2.5ex] 0 & \gamma(t_v)
    \end{pmatrix}.
\end{align}
Note that at infinity we have $\gamma(t_\8)=g_\8^{-1}g(t_\8)g_\8$, hence $ \adjhash{\gamma(t_\8)}=\gamma(t_\8^{-1})$ and
    \begin{align}
   h(t_\8)= h_\8^{-1}\begin{pmatrix}
     g(t_\8)^{-1} & 0 \\[2.5ex] 0 & g(t_\8)
    \end{pmatrix} h_\8
    \end{align}
 where $h_\8=\begin{pmatrix}
     g_\8 & 0 \\ 0 & g_\8
    \end{pmatrix}$. The centralizer of $F_v^\times$ in $\Sp(W_{\Q_v})$ is $\SL_2(F_v)=\prod \SL_2(F_w)$, embedded in $\Sp(W_{\Q_v})$ by
\begin{align} \label{embsl2}
\SL_2(F_v) & \hooklongrightarrow \Sp(W_{\Q_v}) \nonumber \\
\begin{pmatrix} a & b \\ c & d \end{pmatrix} & \longmapsto \begin{pmatrix}
    \gamma(a) &&\gamma(b)& \\
    &\gamma(a)&&\gamma(b) \\
    \gamma(c)&&\gamma(d)& \\
    &\gamma(c)&&\gamma(d)
    \end{pmatrix}.
\end{align} We obtain a dual pair $F_v^\times \times \SL_2(F_v)$ in $\Sp(W_{\Q_v}) \nonumber$
and a seesaw
\begin{equation}
\begin{tikzcd}
H(\Q_v) \arrow[dash, dr] & \SL_2(F_v) \\
F_v^\times \arrow[dash]{u} \arrow[dash, ru] & \SL_2(\Q_v) \arrow[dash]{u}
\end{tikzcd}
\end{equation}
 where the righthand side is the diagonal embedding
 \begin{align}
     \iota_\Delta \colon \SL_2(\Q_v) \longrightarrow \SL_2(F_v).
 \end{align}

\paragraph{The twisted class $C\otimes \psi$.} The space $F_\sigma^2$ is of signature $(1,1)$. Let $\D_\sigma$ be the corresponding symmetric space, that we identify with $\R^\times$. We have an isomorphism of quadratic spaces between $F_\R^2$ and $F^2_\8 = \oplus_\sigma F^2_\sigma$. We can view the product of symmetric spaces $\D_0 \coloneqq\prod \D_\sigma$ as the subspace
\begin{align}
    \D_0 = \D_{\sigma_1} \times \cdots \times \D_{\sigma_N} \simeq \left \{ z \in \D \; \vert \; z = \oplus_\sigma (z \cap F_\sigma^2) \right \},
\end{align}
of $\D$. The connected component $\D_0^+$ can be identified with $\R_{>0}^N$. The image of $F_\8^\times=\prod F_\sigma^\times$ by $h$ is precisely the stabilizer of this subspace in $H(\R)$.

Let $\psi \colon F^\times \backslash \A_F^\times \longrightarrow \C^\times$ be a unitary totally odd Hecke character of finite order and conductor $\fffrak$. Let $K^0_\8$ be the compact
\begin{align}
K^0_\8 \coloneqq \{ t_\8 \in \{ \pm1\}^N \left \vert \det(t_\8)=1 \right .\} \subset F_\8^\times.   
\end{align} We set
\begin{align}
    M_{\fffrak} \coloneqq F^\times \backslash \A_F^\times/K^0(\fffrak) \simeq F^\times \backslash \D_0^+ \times \A_{F,\fin}^\times/\widehat{U}(\fffrak)
\end{align}
where $K^0(\fffrak)=K^0_\8 \times \widehat{U}(\fffrak)$. Suppose that the ideal $\fffrak$ is small enough that $h(\widehat{U}(\fffrak))$ is contained in $K_\fin$. Then the embedding $h$ induces an immersion
\begin{align} \label{emb1}
    h \colon M_{\fffrak} \longrightarrow M_K.
\end{align}
As in \eqref{doublecosetH} we have a decomposition
\begin{align} \label{decomp2}
    \A_{F,\fin}^\times= \bigsqcup_{[\afrak] \in \Cl_{\fffrak}(F)^+} F^{\times,+} t_\afrak \widehat{U}(\fffrak)
\end{align}
for some $t_\afrak$'s in $\A_{F,\fin}$ indexed by the ray class group.
The double coset space $M_\fffrak$ is the disjoint union of symmetric space
\begin{align} \label{union1}
    M_\fffrak=\bigsqcup_{[\afrak] \in \Cl_\fffrak(F)^+} \Gamma_\afrak \backslash \D_0^+,
\end{align}
where $\D_0^+=F_\8^{\times,+}$ and $\Gamma_\afrak \coloneqq F^{\times,+} \cap t_\afrak \widehat{U}(\fffrak)t_\afrak^{-1} = F^{\times,+} \cap \widehat{U}(\fffrak)$, since $\A_F^\times$ is commutative. For every class $[\afrak]$ in the ray class group let $C_\afrak$ be the image of the connected component $\Gamma_\afrak \backslash F_\8^{\times,+}$ by the map \eqref{emb1}. It defines a relative cycle in $\Zcal_N(\overline{M_K},\partial \overline{M_K};\R)$, that is non compact since 
\begin{align}
\Gamma_\afrak \backslash F_\8^{\times,+} \simeq \R_{>0} \times \Gamma_\afrak \backslash F_\8^{1,+},
\end{align}
where $F^1$ are the elements of norm $1$.  More precisely the map \eqref{emb1} goes as follows: a point $\Gamma_\afrak t_\8$ is mapped to $F^\times(t_\8,t_\afrak)K^0$, which is mapped to 
\begin{align}H(\Q)(h(t_\8),h(t_\afrak))K_\fin = H(\Q)(z,h(t_\afrak))K_\fin, \end{align}
where $z=h(t_\8)z_0$ in $\D^+$. The element $h(t_\afrak)$ lies in one some double coset $H(\Q)^+h_iK_\fin$. Hence we can write $h(t_\afrak)=h_\afrak^{-1} h_i k_\fin$ for some $h_\afrak$ in $H(\Q)^+$ and $k_\fin$ in $K_\fin$. Thus 
\begin{align} H(\Q)(z,h(t_\afrak))K_\fin= H(\Q)(h_\afrak z,h_i)K_\fin, \end{align}
which is sent to $\Gamma_{h_i} h_\afrak z$. The cycle $C_\afrak$ is the image of $\D_\afrak^+ \coloneqq h_\afrak \D_0^+$ in $M_{h_i}$ by the natural projection map from $\D^+$ onto $M_{h_i}$. We define the relative cycle
\begin{align}
    C\otimes \psi \coloneqq \sum_{\afrak \in \Cl_\fffrak(F)^+} \psi(\afrak) C_\afrak \in \Zcal_N(\overline{M_K},\partial \overline{M_K};\R),
\end{align}
where we view $\psi$ as character on the ray class group.

\paragraph{Restriction to $\D_0^+$.} Let $dt^\times$ be the Haar measure on $\A_F^\times$ normalized such that $\vol^\times(\widehat{U}(\fffrak))=1$.  We identify $F_\8^{\times,+}$ with $\R_{>0}^N$ and orient it by the volume form $dt^\times_\8=dt_1^\times \cdots dt^\times_N$. Let $\g_0$ be the Lie algebra of $F_\8^\times$, that we identify with $\R^N$. The choice of the volume form induces an isomorphism between $\sideset{}{^N}\bigwedge \g_0^\ast$ and $\R$. It also yields the isomorphism
\begin{align}
    \Omega^N(\D_0^+) \simeq \left [C^\8(\R_{>0}^N) \otimes \sideset{}{^N}\bigwedge \g_0^\ast \right ]^{K^0_\8} \simeq C^\8(\R_{>0}^N)^{K^0_\8}.
\end{align}
Moreover, combining with \eqref{XKform2} we get the isomorphism
\begin{align} \label{isom1}
    \left [  C^\8(F^\times \backslash \A_F^\times) \right ]^{K^0(\fffrak)} & \longrightarrow \Omega^{N}(M_\fffrak)  \nonumber \\
    \eta_\8 \otimes \eta_\fin & \longmapsto \sum_{\afrak \in \Cl_\fffrak(F)^+} \eta_\fin(t_\afrak)\eta_\8dt^\times_\8.
\end{align}
If $\tilde{\eta}$ in $C^\8(F^\times \backslash A_F^\times)^{K^0(\fffrak)}$ corresponds to $\eta$ in $ \Omega^{N}(M_\fffrak)$ then
\begin{align}
    \int_{M_\fffrak}\eta = \frac{1}{\vol^\times(K^0_\8)}\int_{F^\times \backslash \A_F^\times}\tilde{\eta}(t)dt^\times.
\end{align}
Since $K^0_\8$ is $\{ \pm 1 \}^{N-1}$ we have $\vol^\times(K^0_\8)=2^{N-1}$. 

Recall that we have isomorphism between $F^2_\R$ and $F^2_\8=\prod_\sigma F^2_\sigma.$ Let $(\xbf_{\sigma_1}, \dots, \xbf_{\sigma_N})$ in $F^2_\8$ be the image of $\xbf=(x,x')$ in $F^2_\R$, where $\xbf_\sigma=(x_\sigma,x'_\sigma)$. We identify $F_\R$ with $z_0$ by sending $v$ to $(v,-v)$. Consider the tautological bundle $E=H(\R)^+ \times_{K_\8} F_\R$ over $\D^+$. By \cite{rbr} the Kudla-Millson form\footnote{Note that the Kudla-Millson form used here differs by a factor $2^\frac{N}{2}$ with the classical Kudla-Millson that appears in the original paper \cite{km} as well as in \cite{rbr}.} is given by $\varphi_{KM}(\xbf)= s_{\xbf}^\ast U_{MQ}$ where $U_{MQ}$ is the Mathai-Quillen form on $E$.  The bundle $E$ splits over $\D_0^+$ {\em i.e. }we have the diagram
\begin{equation} \label{diagram}
\begin{tikzcd}
\restr{E}{\D_0^+} \simeq \R_{>0}^N \times F_\8 \arrow[hook, r] & E \\
\D_0^+ \simeq \R^N_{>0}  \arrow[u,"\oplus s_{\xbf_\sigma}"] \arrow[hook, r] & \D^+ \arrow[u,"s_\xbf"]
\end{tikzcd},
\end{equation}
where the top map sends a pair $(t_\8,v)$ in $ \R_{>0}^N \times F_\8$ to the class $[h(t_\8),g_\8^{-1}v]$ in $E$. Moreover the restriction of the section to $\restr{E}{\D_0^+}$ is given by $\oplus_\sigma s_{\xbf_\sigma}$ where
\begin{align}
s_{\xbf_\sigma}(t_\sigma)=\left (t_\sigma, \frac{t_\sigma^{-1}x_\sigma -t_\sigma x'_\sigma}{\sqrt{2}} \right ) \in \R_{>0} \times F_\sigma.
\end{align}
\begin{rmk}
Since $\D_\xbf^+$ is the zero locus $s_\xbf$ in $\D^+$, the intersection $\D_0^+ \cap \D_\xbf^+$ is the zero locus of $\oplus s_{\xbf_\sigma}$ in $\D_0^+$.
\end{rmk}

\begin{prop} \label{phikmrest}
For a vector $\xbf$ in $F^2_\R$ the restriction $\restr{\varphi_{KM}(\xbf)}{\D_0^+}$ corresponds to the smooth function
\[ \omega_{os}(h(t_\8))\varphi_\8(\xbf) \in C^\8(\R_{>0}^N)^{K^0_\8} \]
in the variable $t_\8$, where $\varphi_\8(\xbf)=\prod_{\sigma} \varphi_\sigma(\xbf_\sigma)$ is in $\Scal(F_\8^2)$ and
\[\varphi_\sigma(\xbf_\sigma)=\exp \left (-\pi x_\sigma^2 -\pi {x_\sigma'}^2 \right )(x_\sigma+x_\sigma') \in \Scal(F^2_\sigma).\]
\end{prop}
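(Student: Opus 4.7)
The plan is to compute the restriction via the Mathai-Quillen realization $\varphi_{KM}(\xbf)=e^{-\pi Q(\xbf,\xbf)}s_{\xbf}^{\ast}U_{MQ}(E)$, pull everything through the splitting in diagram \eqref{diagram}, and then identify the resulting function on $\R_{>0}^{N}$ with the target Weil-representation expression.

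First, the commutative diagram \eqref{diagram} reduces the problem to a product over archimedean places. Indeed, the restriction of $E$ to $\D_0^{+}$ splits as an orthogonal direct sum $\bigoplus_{\sigma} L_\sigma$ of rank-$1$ Euclidean line bundles, with the section $s_{\xbf}$ restricting to the direct sum $\bigoplus_\sigma s_{\xbf_\sigma}$ where $s_{\xbf_\sigma}(t_\sigma)=(t_\sigma,f_\sigma(t_\sigma))$ and $f_\sigma(t_\sigma)=(t_\sigma^{-1}x_\sigma-t_\sigma x'_\sigma)/\sqrt{2}$. By the multiplicativity of the Mathai-Quillen Thom form under orthogonal direct sums,
\[
\restr{\varphi_{KM}(\xbf)}{\D_0^{+}}=e^{-\pi Q(\xbf,\xbf)}\bigwedge_\sigma s_{\xbf_\sigma}^{\ast}U_{MQ}(L_\sigma).
\]

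Second, for each archimedean place $\sigma$ the line bundle $L_\sigma\to \R_{>0}$ carries an explicit Euclidean metric and a connection inherited from the Levi-Civita connection on $\D^{+}$. Using the explicit formula for the rank-$1$ Mathai-Quillen form and pulling back by $s_{\xbf_\sigma}$ produces a $1$-form on $\R_{>0}$ involving the Gaussian $e^{-\pi f_\sigma^{2}}$ together with (after taking the covariant derivative) an algebraic expression in $t_\sigma,x_\sigma,x'_\sigma$ times $dt_\sigma$.

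Third, one takes the wedge over $\sigma$ and rewrites everything in terms of the multiplicative Haar form $dt^{\times}_{\infty}=\bigwedge_\sigma dt_\sigma/t_\sigma$, which introduces a factor $\prod_\sigma t_\sigma$. The external factor $e^{-\pi Q(\xbf,\xbf)}=\prod_\sigma e^{-2\pi x_\sigma x'_\sigma}$ is then combined with the fiberwise Gaussians via the completion-of-squares identity
\[
-\pi Q(\xbf,\xbf)-\pi\sum_\sigma f_\sigma^{2}=-\tfrac{\pi}{2}\sum_\sigma \bigl(t_\sigma^{-1}x_\sigma+t_\sigma x'_\sigma\bigr)^{2},
\]
and the remaining $\sqrt{2}$'s and signs collect into the desired linear factor $(t_\sigma^{-1}x_\sigma+t_\sigma x'_\sigma)$ at each place. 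One recognizes the resulting product over $\sigma$ as $\prod_\sigma \varphi_\sigma(h(t_\sigma)^{-1}\xbf_\sigma)=\omega_{os}(h(t_\infty))\varphi_\infty(\xbf)$, and then the isomorphism \eqref{isom1} (which identifies an $N$-form $F(t_\infty)dt^{\times}_\infty$ on $\D_0^{+}$ with the function $F$) finishes the proof.

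The main obstacle is the bookkeeping in the middle step: the Mathai-Quillen form on each $L_\sigma$ must be computed with the correct connection (naively treating the bundles as trivially connected gives the wrong Gaussian), and the various factors of $\sqrt{2}$ coming from the orthonormal identification of $z_0$ with $F_\R$, the signs from the orientation convention on $\D^{+}$, and the change from $dt_\sigma$ to $dt^{\times}_\sigma$ must all combine correctly. Once this is done, the algebraic identity above collapses everything into the clean product form $\varphi_\infty(h(t_\infty)^{-1}\xbf)$.
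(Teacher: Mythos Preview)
Your strategy coincides with the paper's: factor the restriction as a wedge of rank-$1$ Kudla--Millson forms over the archimedean places (the paper quotes this from \cite{rbr}), then write down the signature-$(1,1)$ Kudla--Millson form explicitly and identify it with $\varphi_\sigma(h(t_\sigma)^{-1}\xbf_\sigma)\,dt_\sigma^\times$. So the architecture is right.

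The place where your write-up would actually fail is the completion-of-squares step. Your displayed identity
\[
-\pi Q(\xbf,\xbf)-\pi\sum_\sigma f_\sigma^{2}=-\tfrac{\pi}{2}\sum_\sigma\bigl(t_\sigma^{-1}x_\sigma+t_\sigma x'_\sigma\bigr)^{2}
\]
is algebraically correct, but its right-hand side is \emph{not} the exponent of $\varphi_\sigma(h(t_\sigma)^{-1}\xbf_\sigma)$: by the definition in the statement one has
\[
\varphi_\sigma(h(t_\sigma)^{-1}\xbf_\sigma)=\exp\!\Bigl(-\pi\bigl(t_\sigma^{-1}x_\sigma\bigr)^{2}-\pi\bigl(t_\sigma x'_\sigma\bigr)^{2}\Bigr)\bigl(t_\sigma^{-1}x_\sigma+t_\sigma x'_\sigma\bigr),
\]
and $-\tfrac{\pi}{2}(a+b)^2\neq -\pi a^2-\pi b^2$. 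The slip is in the fibre Gaussian: with the metric $-Q\vert_{z_0}$ (note $Q(\fbf_\sigma,\fbf_\sigma)=-2$) the Mathai--Quillen pullback contributes $e^{-2\pi f_\sigma^{2}}$, not $e^{-\pi f_\sigma^{2}}$. With the correct coefficient the identity becomes
\[
-\pi Q(\xbf,\xbf)-2\pi\sum_\sigma f_\sigma^{2}=-\pi\sum_\sigma\Bigl(\bigl(t_\sigma^{-1}x_\sigma\bigr)^{2}+\bigl(t_\sigma x'_\sigma\bigr)^{2}\Bigr),
\]
which is exactly the exponent of $\varphi_\sigma(h(t_\sigma)^{-1}\xbf_\sigma)$. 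This is precisely the ``factors of $\sqrt{2}$'' bookkeeping you flagged as the obstacle; once it is corrected, the rest of your outline goes through and matches the paper's citation of the explicit $(1,1)$ formula
\[
\varphi_{KM}^{\sigma}(\xbf_\sigma)=\exp\!\Bigl(-\pi\bigl(t_\sigma^{-1}x_\sigma\bigr)^{2}-\pi\bigl(t_\sigma x'_\sigma\bigr)^{2}\Bigr)\bigl(t_\sigma^{-1}x_\sigma+t_\sigma x'_\sigma\bigr)\,\frac{dt_\sigma}{t_\sigma}.
\]
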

\begin{proof} By the first example in \cite{rbr} we have
\[\restr{\varphi_{KM}(\xbf)}{\D_0^+}=\varphi_{KM}^{\sigma_1}(\xbf_{\sigma_1})\wedge \cdots \wedge \varphi_{KM}^{\sigma_N}(\xbf_{\sigma_N}) \]
where $\varphi_{KM}^{\sigma}$ is the Kudla-Millson form of the symmetric space $\D_\sigma$ and $\xbf_\sigma$ is the component of $\xbf$ in $F_\sigma^2$. By the second example we have
\[\varphi_{KM}^{\sigma}(\xbf_{\sigma_1})=\exp \left (-\pi \left ( \frac{x_\sigma}{t_\sigma} \right )^2 -\pi \left (x_\sigma't_\sigma \right )^2  \right )\left ( \frac{x_\sigma}{t_\sigma}+x_\sigma't_\sigma \right ) \frac{dt_\sigma}{t_\sigma}.\]
\end{proof}

Let $\varphi_\fin$ be a $K_\fin$-invariant Schwartz function in $\Scal(F_{\A_\fin}^2)$ as before. We identify $\A^2_{F,\fin}$ with $F_{\A_\fin}^2$ and view $\varphi_\fin$ in $\Scal(\A^2_{F,\fin})$. For $g$ in $\SL_2(\A_F)$ and $t$ in $\A_F^\times$ let us define a second theta series
\begin{align}
    \widetilde{\Theta}_{os}(g,t,\varphi) \coloneqq \sum_{\xbf \in F^2} \omega_{os}(g,t)\varphi(\xbf) 
\end{align}
where $\varphi \coloneqq \varphi_\8 \otimes \varphi_\fin$ is in $\Scal(\A_F^2)$. The function $\widetilde{\Theta}_{os}$ is a kernel for the pair $\SL_2(\A_F) \times \A_F^\times$. We view $\A_F^\times$ as $\SO(\A_F^2)$, so the dual pair that is involved is the ortho-symplectic pair given on Page \pageref{dualpairpage}.  After fixing $g$ we have a smooth function 
\begin{align}
   \widetilde{\Theta}_{os}(g,\cdot,\varphi) \in \left [C^\8(F^\times \backslash \A_F^\times) \right ]^{K^0(\fffrak)}
\end{align} in the variable $t$ in $\A_F^\times$. Let
\begin{align}
\restr{\Theta_{KM}(\tau, \varphi_\fin)}{M_\fffrak} \in \Omega^N(M_\fffrak)
\end{align}
be the restriction of the Kudla-Millson theta series to $M_\fffrak$. Let $g_\tau$ be the standard matrix in $\SL_2(\R)$ sending $i$ to $\tau$.

\begin{prop} \label{equiv1} The function $\widetilde{\Theta}_{os}(\iota_\Delta(g_\tau),\cdot,\varphi_\fin)$ correspond to $v^\frac{N}{2}\restr{\Theta_{KM}(\tau, \varphi_\fin)}{M_\fffrak}$ in the isomorphism \eqref{isom1}.
\end{prop}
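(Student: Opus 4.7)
The plan is to verify the equality component by component on the decomposition $M_\fffrak=\bigsqcup_{[\afrak]\in \Cl_\fffrak(F)^+}\Gamma_\afrak\backslash\D_0^+$ from \eqref{union1}, then recognize the resulting identity as the seesaw equality between the theta kernels of the dual pairs $(\SL_2(\Q),H)$ and $(\SL_2(F),F^\times)$. The abuse of notation on the left means $\varphi=\varphi_\8\otimes\varphi_\fin$ where $\varphi_\8=\prod_\sigma\varphi_\sigma$ is the archimedean Schwartz function from Proposition \ref{phikmrest}.

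Fixing a class $[\afrak]$ and writing $h(t_\afrak)=h_\afrak^{-1}h_ik_\fin$ with $h_\afrak\in H(\Q)^+$ and $k_\fin\in K_\fin$, the cycle $C_\afrak$ is the image of $h_\afrak\D_0^+\subset\D^+$ in $M_{h_i}$. I would pull back the theta series $v^{N/2}\Theta_{KM}(\tau,h_i,\varphi_\fin)=\sum_{\xbf\in F^2}\omega_{os}(g_\tau)\varphi_{KM}(\xbf)\varphi_\fin(h_i^{-1}\xbf)$ along $h_\afrak$. Using the $H(\R)^+$-equivariance $h^\ast\varphi_{KM}(\xbf)=\varphi_{KM}(h^{-1}\xbf)$, the change of variables $\xbf\mapsto h_\afrak\xbf$ in the sum over $X_\Q=F^2$, the identity $h_i^{-1}h_\afrak=k_\fin h(t_\afrak)^{-1}$, and the $K_\fin$-invariance of $\varphi_\fin$, the pullback simplifies to
\begin{align*}
h_\afrak^\ast\bigl(v^{N/2}\Theta_{KM}(\tau,h_i,\varphi_\fin)\bigr)=\sum_{\xbf\in F^2}\omega_{os}(g_\tau)\varphi_{KM}(\xbf)\,\varphi_\fin\bigl(h(t_\afrak)^{-1}\xbf\bigr)
\end{align*}
as a form on $\D^+$. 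Restricting to $\D_0^+$ and applying Proposition \ref{phikmrest}, and observing that $\omega_{os}(g_\tau)$ is linear in the Schwartz variable and commutes with the orthogonal action of $h(t_\8)$, the function on $F_\8^{\times,+}$ attached to the $\afrak$-component reads
\begin{align*}
\sum_{\xbf\in F^2}\omega_{os}\bigl(g_\tau,h(t_\8)\bigr)\varphi_\8(\xbf)\cdot\varphi_\fin\bigl(h(t_\afrak)^{-1}\xbf\bigr),
\end{align*}
twisted by the volume form $dt_\8^\times$.

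Finally, I would invoke the seesaw compatibility of the Weil representations: under the isomorphism $\varsigma\colon F^2_\A\xrightarrow{\sim}F^2_{\A_F}$, the action of $(g_\tau,h(t))$ in the Weil representation of the big dual pair $(\SL_2(\Q),H)$ on $\Scal(F^2_\A)$ matches the action of $(\iota_\Delta(g_\tau),t)$ in the Weil representation of the small dual pair $(\SL_2(F),F^\times)$ on $\Scal(F^2_{\A_F})$, while the summation lattice $X_\Q=F^2$ is common to both sides. This identifies the inner sum with $\widetilde{\Theta}_{os}(\iota_\Delta(g_\tau),t_\afrak t_\8,\varphi_\8\otimes\varphi_\fin)$, which by the description of \eqref{isom1} is exactly the value at the $\afrak$-component of the $N$-form associated to the function $\widetilde{\Theta}_{os}(\iota_\Delta(g_\tau),\cdot,\varphi)$ on $F^\times\backslash\A_F^\times$. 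The main obstacle is establishing this seesaw identity, but it suffices to check it on the Chevalley generators of $\SL_2$: for $n(b)$ both sides scale by the phase $\chi(bQ^0(\xbf,\xbf)/2)$, matching because $\chi_F=\chi_\Q\circ\Tr_{F\mid\Q}$ and $Q=\Tr_{F\mid\Q}\circ Q^0$; for $m(a)$ both act by $\vert a\vert^N\varphi(a\xbf)$ since $\dim_\Q X=2N$ while $\dim_F F^2=2$; and for the Weyl element $S$ both perform the Fourier transform on $F^2$, using self-dual measures that coincide under the trace pairing. This is precisely the seesaw example $(2.19)$ of \cite{sspair} referenced in the preliminaries.
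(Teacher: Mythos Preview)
Your proof is correct and follows essentially the same route as the paper: restrict $\varphi_{KM}$ to $\D_0^+$ via Proposition \ref{phikmrest}, then invoke the seesaw identity between the Weil representations of $(\SL_2(\Q),H)$ and $(\SL_2(F),F^\times)$ to land on $\widetilde{\Theta}_{os}$. The only differences are expository: you unwind the immersion $M_\fffrak\to M_K$ concretely through the decomposition $h(t_\afrak)=h_\afrak^{-1}h_ik_\fin$ and the pullback along $h_\afrak$, whereas the paper works directly in the adelic model; and you spell out the seesaw compatibility on the generators $n(b),m(a),S$, which the paper simply labels ``seesaw''.
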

\begin{proof} Recall that we defined $\Theta_{KM}(\tau, \varphi_\fin)=v^{-\frac{N}{2}}\Theta_{os}(g_\tau,\varphi_{KM} \otimes \varphi_\fin)$. First for $g$ in $\SL_2(\A)$ we have
\begin{align}
\restr{\Theta_{os}(g,\varphi_{KM}\otimes \varphi_\fin)}{M_\fffrak} & = \Theta_{os}(g,\restr{\varphi_{KM}}{\D_0^+}\otimes \varphi_\fin) \nonumber \\
& = \sum_{\afrak \in \Cl_\fffrak(F)^+} \Theta_{os}(g,h(t_\afrak),\restr{\varphi_{KM}}{\D_0^+}\otimes \varphi_\fin)\in \Omega^N(M_\fffrak).
\end{align}
Then, using Proposition \ref{phikmrest} we compute
\begin{align}
 \Theta_{os}(g,h(t_\afrak),\restr{\varphi_{KM}}{\D_0^+}\otimes \varphi_\fin) & = \sum_{\xbf \in F_\Q^2} \omega_{os}(g,t_\afrak)\restr{\varphi_{KM}}{\D_0^+}(\xbf) \varphi_\fin(\xbf) \nonumber  \\
 & = \sum_{\xbf \in F^2_\Q} \omega_{os}(g,h(t_\8,t_\afrak))\varphi_\8(\xbf) \varphi_\fin(\xbf)dt^\times_\8 \\
 & \stackrel{\textrm{seesaw}}{=} \sum_{\xbf \in F^2} \omega_{os}(\iota_\Delta(g),(t_\8,t_\afrak))\varphi_\8(\xbf) \varphi_\fin(\xbf)dt^\times_\8 \nonumber.
\end{align}
At $g=(g_\tau,1)$ and $t=(t_\8,t_\afrak)$ it corresponds to $\widetilde{\Theta}_{os}(\iota_\Delta(g_\tau),t,\varphi)$ in the isomorphism \eqref{isom1}.
\end{proof}

\paragraph{A few integrals.} Before passing to the regularized integral of $\Theta_{KM}(\tau,\varphi_\fin)$ on $C\otimes \psi$, we will need the following integrals for $\xbf$ in $F_\R^2$ and a complex number $s$
\begin{gather}
    J_\8(\xbf,s) \coloneqq \prod_\sigma J_\sigma(\xbf_\sigma,s), \qquad J_\sigma(\xbf_\sigma,s) \coloneqq \int_{\R^\times} \omega_{os}(t_\sigma) \varphi_\sigma(\xbf_\sigma) \psi(t_\sigma) \vert t_\sigma \vert^s dt^\times_\sigma,  \nonumber \\
    J_\8(\xbf,s)^+ \coloneqq \prod_\sigma J_\sigma(\xbf_\sigma,s)^+, \qquad J_\sigma(\xbf_\sigma,s)^+ \coloneqq \int_{\R^\times} \left \vert \omega_{os}(t_\sigma) \varphi_\sigma(\xbf_\sigma)\right \vert \vert t_\sigma \vert^s dt^\times_\sigma.
\end{gather}
Let us also define the following subsets of $F^2$:
\begin{gather}
M^+ \coloneqq  \left \{ \left .  \xbf \in F^2 \; \right \vert \;   Q(\xbf,\xbf) > 0  \right \}, \qquad M^- \coloneqq \left \{ \left . \xbf \in F^2 \; \right \vert \;  Q(\xbf,\xbf) < 0  \right \}, \qquad
M^\times \coloneqq  M^- \sqcup M^+, \nonumber \\
 l_1 \coloneqq  \left \{\left .\begin{pmatrix}
 x \\ 0
 \end{pmatrix} \in F^2 \right \vert x \neq 0 \right \}, \qquad
 l_2 \coloneqq  \left \{\left . \begin{pmatrix}
 0 \\ x'
 \end{pmatrix} \in F^2 \right \vert x' \neq 0  \right \},
\end{gather}
where $l_1$ and $l_2$ are two isotropic lines, spanned by the isotropic vectors $\ebf_1\coloneqq \transp{(1,0)}$ and $\ebf_2 \coloneqq \transp{(0,1)}$ in $F^2$. For a positive real number $\alpha$ and a complex number $s$ we define the $K$-Bessel function
\begin{align}
K_s(\alpha) \coloneqq \int_0^\8e^{-\alpha(\beta+\beta^{-1})/2}\beta^s\frac{d\beta}{\beta}.    
\end{align}

\begin{lem} \label{vanishintegral}
\begin{enumerate}
\item For $\xbf =x\ebf_1$ in $l_1$ and $\re(s)<1$ we have
    \begin{align*}J_\8(\xbf,s)= \frac{\N_{F/\Q}(x)}{ \vert \N_{F/\Q}(x) \vert ^{1-s}}\Gamma \left ( \frac{1-s}{2} \right )^N\pi^{-\frac{N(1-s)}{2}}.\end{align*}
\item For $\xbf=x'\ebf_2$ in $l_2$ and $\re(s)>-1$ we have
    \begin{align*}J_\8(\xbf,s) = \frac{\N_{F/\Q}(x')}{ \vert \N_{F/\Q}(x') \vert ^{1+s}}\Gamma \left ( \frac{1+s}{2} \right )^N\pi^{-\frac{N(1+s)}{2}}.\end{align*}
\item For $\xbf=(x,x')$ in $M^\times$ and any $s$ we have
    \begin{align*} J_\8(\xbf,s) = \lvert \N_{F/\Q}(x)\rvert^\frac{1+s}{2} & \lvert \N_{F/\Q}(x')\rvert^\frac{1-s}{2} \prod_{\sigma} \left (\sgn(x_\sigma) K_{\frac{1-s}{2}}(2\pi \lvert x'_\sigma x_\sigma \rvert) \right . \left . + \sgn(x'_\sigma)K_{\frac{1+s}{2}}(2\pi \lvert x_\sigma x'_\sigma \rvert) \right ).\end{align*}
\item For $\xbf$ in $M^-$ we have $J_\8(\xbf,0)=0$.
\end{enumerate}
\end{lem}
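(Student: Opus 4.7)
The plan is to reduce everything to one archimedean place. Since both $\varphi_\8 = \prod_\sigma \varphi_\sigma$ and the Haar measure on $F_\8^\times = \prod_\sigma F_\sigma^\times$ factor as a product, and since the action $\omega_{os}(t_\8) = \prod_\sigma \omega_{os}(t_\sigma)$ is diagonal, we immediately get $J_\8(\xbf,s) = \prod_\sigma J_\sigma(\xbf_\sigma,s)$. Thus it suffices to compute the local integral
\[
J_\sigma(\xbf_\sigma,s) = \int_{\R^\times} e^{-\pi x_\sigma^2/t^2 - \pi {x'_\sigma}^2 t^2}\left ( \frac{x_\sigma}{t} + x'_\sigma t \right )\sgn(t)\vert t \vert^{s-1}\, dt,
\]
where I have used the explicit formula for $\varphi_\sigma$ from Proposition~\ref{phikmrest}, the identification $F_\sigma^\times \simeq \R^\times \subset \SO(F_\sigma^2)$ acting by $t\cdot(x,x')=(t^{-1}x,tx')$, and $\psi_\sigma = \sgn$. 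The integrand is invariant under $t \mapsto -t$ (both $x/t + x't$ and $\sgn(t)$ change sign), so $J_\sigma = 2\int_0^\8(\cdots)\, dt$.

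For Case~(1) only the $x_\sigma/t$ term contributes, giving $2x_\sigma\int_0^\8 e^{-\pi x_\sigma^2/t^2} t^{s-2}\, dt$; the substitution $u = \pi x_\sigma^2/t^2$ collapses this to a $\Gamma\bigl((1-s)/2\bigr)$ integral, valid for $\re(s)<1$, and multiplying over $\sigma$ yields the claimed formula. Case~(2) is symmetric: the $x'_\sigma t$ term survives, and $u = \pi {x'_\sigma}^2 t^2$ gives a $\Gamma\bigl((1+s)/2\bigr)$ integral valid for $\re(s) > -1$.

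For Case~(3), I would split the integral into its two summands and, in each, perform the substitution $t = \sqrt{\vert x_\sigma/x'_\sigma \vert}\,\sqrt{\beta}$, which is designed so that both parts of the exponent become $-\pi\vert x_\sigma x'_\sigma\vert\beta$ and $-\pi\vert x_\sigma x'_\sigma\vert/\beta$ respectively. One then recognizes the resulting integrals as the $K$-Bessel integrals of index $(s-1)/2$ and $(s+1)/2$ at argument $2\pi\vert x_\sigma x'_\sigma\vert$; using the symmetry $K_\nu = K_{-\nu}$ one rewrites these as $K_{(1-s)/2}$ and $K_{(1+s)/2}$. Collecting powers of $\vert x_\sigma\vert$ and $\vert x'_\sigma\vert$ yields the stated product expression after taking $\prod_\sigma$.

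Finally Case~(4) follows from Case~(3) specialised at $s=0$: both Bessel indices collapse to $1/2$, so the $\sigma$-factor becomes $\bigl(\sgn(x_\sigma) + \sgn(x'_\sigma)\bigr)K_{1/2}(2\pi\vert x_\sigma x'_\sigma\vert)$. This vanishes whenever $x_\sigma x'_\sigma < 0$. Since $\xbf \in M^-$ means $Q(\xbf,\xbf) = 2\Tr_{F/\Q}(xx') < 0$, at least one real place $\sigma$ must satisfy $x_\sigma x'_\sigma < 0$, killing the whole product. The one genuinely delicate step is setting up the substitution in Case~(3) so that the $K$-Bessel integrals emerge cleanly with matching arguments; everything else is bookkeeping of signs, absolute values, and the $\prod_\sigma$ over real places to package local data into $\N(x),\N(x')$.
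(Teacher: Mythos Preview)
Your proof is correct and follows essentially the same approach as the paper: factor $J_\8$ into local integrals $J_\sigma$, use the oddness of $\psi_\sigma$ to reduce to $2\int_0^\8$, recognise Gamma integrals in Cases~(1) and~(2), and in Case~(3) perform a scaling substitution to identify $K$-Bessel integrals (the paper substitutes $t=\sqrt{\lvert x_\sigma/x'_\sigma\rvert}\,u$ rather than your $t=\sqrt{\lvert x_\sigma/x'_\sigma\rvert}\sqrt{\beta}$, but this is cosmetically the same up to $\beta=u^2$). Case~(4) is derived identically from Case~(3) at $s=0$.
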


\begin{proof} By the definition of $\varphi_\8$ we have
\begin{align} \label{integralvanishes} J_\sigma(\xbf_\sigma,s) & \coloneqq \int_{-\8}^\8 \exp \left (-\pi \left ( \frac{x_\sigma}{t_\sigma} \right )^2 -\pi \left (x_\sigma't_\sigma \right )^2  \right )\left ( \frac{x_\sigma}{t_\sigma}+x_\sigma't_\sigma \right )\psi_\sigma(t_\sigma) \vert t_\sigma \vert ^{s} \frac{dt_\sigma}{\vert t_\sigma \vert} \nonumber \\
& = (1-\psi_\sigma(-1))\int_{0}^\8 \exp \left (-\pi \left ( \frac{x_\sigma}{t_\sigma} \right )^2 -\pi \left (x_\sigma't_\sigma \right )^2  \right )\left ( \frac{x_\sigma}{t_\sigma}+x_\sigma't_\sigma \right ) \vert t_\sigma \vert ^{s} \frac{dt_\sigma}{t_\sigma}.
\end{align}
Since $\psi$ is totally odd we have $\psi_\sigma(-1)=-1$ at every archimedean place $\sigma$ and this integral is nonzero. Note that $x$ being nonzero is equivalent to $x_\sigma$ being nonzero for every embedding $\sigma$.
\begin{enumerate}
\item Suppose that $x'$ is zero. Then for $\re(s)<1$ we have
\begin{align}
    J_\sigma(\xbf_\sigma,s) = 2\int_0^\8 \exp \left (-\pi \frac{x_\sigma^2}{t_\sigma^2} \right ) {x_\sigma}t_\sigma^{s-1} \frac{dt_\sigma}{t_\sigma} =\frac{x_\sigma}{\pi^\frac{1-s}{2} \vert x_\sigma \vert^{1-s}}\Gamma\left ( \frac{1-s}{2} \right ).
\end{align}
\item Suppose that $x$ is zero. Then for $\re(s)>-1$ we have
\begin{align}
    J_\sigma(\xbf_\sigma,s) = 2 \int_0^\8 \exp \left (-\pi {x'_\sigma}^2t^2 \right ) x'_\sigma t_\sigma^{1+s} \frac{dt_\sigma}{t_\sigma}=\frac{x'_\sigma}{\pi^\frac{1+s}{2} \vert x'_\sigma \vert^{1+s}}\Gamma\left ( \frac{1+s}{2} \right ).
\end{align}

\item \label{point3} Finally, suppose that $x_\sigma x'_\sigma$ is nonzero. Then, using the substitution $t=\sqrt{\left \vert \frac{x'_\sigma}{x_\sigma} \right \vert }u$ we have for any $s$ in $\C$
\begin{align}
    J_\sigma(\xbf_\sigma,s) & = 2 \lvert x_\sigma \rvert^\frac{1+s}{2} \lvert x'_\sigma\rvert^\frac{1-s}{2}\int_{0}^\8 e^{-\pi \lvert x'_\sigma x_\sigma \rvert \left ( u^{-2} + u^{2} \right )}\left (\sgn(x_\sigma)u^{1+s} + \sgn(x'_\sigma)u^{s-1} \right ) \frac{du}{u} \nonumber \\
    & = \lvert x_\sigma\rvert^\frac{1+s}{2} \lvert x'_\sigma\rvert^\frac{1+s}{2} \left (\sgn(x_\sigma) K_{\frac{1+s}{2}}(2\pi \lvert x_\sigma x'_\sigma \rvert)+\sgn(x'_\sigma)K_{\frac{1-s}{2}}(2\pi \lvert x_\sigma x'_\sigma \rvert) \right ).
\end{align}
 We made use of the substitution $v=u^2$ and the fact that $K_{-s}=K_s$.
 
 \item Since $M^-$ is a subset of $M^\times$ we have by \ref{point3}. that
 \begin{align} \label{integral1} J_\8(\xbf,0) =\prod_{\sigma }\left (\sgn(x_\sigma)+\sgn(x'_\sigma) \right ) \sqrt{\lvert x_\sigma x'_\sigma\rvert } K_{\frac{1}{2}}(2\pi \lvert x_\sigma x'_\sigma \rvert).
 \end{align}
This can only be nonzero if $\sgn(x_\sigma x'_\sigma)=1$ for all $\sigma$, since $x_\sigma x'_\sigma$ is nonzero. This implies that
\begin{align}
    Q(\xbf,\xbf)=\Tr_{F/\Q}( xx') = \sum_{\sigma} x_\sigma x'_\sigma = \sum_{\sigma } \lvert x_\sigma x'_\sigma \rvert  \geq 0.
\end{align}
 \end{enumerate}
 \end{proof}

\paragraph{Regularization of the integral.} We define the integral
\begin{align}
    I(\tau_1, \dots, \tau_N,\varphi,\psi,s) \coloneqq \frac{(v_1 \cdots v_N)^{-\frac{1}{2}}}{\vol^\times(K^0_\8)}\int_{F^\times \backslash \A_F^\times} \widetilde{\Theta}_{os}(g_{\underline{\tau}},t,\varphi)\psi(t) \lvert t \rvert^s dt^\times
\end{align}
where $\underline{\tau}=\underline{u}+\underline{i}\underline{v}=(\tau_1, \dots, \tau_N)$ in $\HH^N$. In this section we want to show that the integral $I(\tau_1, \dots, \tau_N,\varphi,\psi,s)$ converges absolutely in a regularized way. Note that for the diagonal restriction to $(\tau, \dots, \tau)$ we have
\begin{align}
    I(\tau,\varphi,\psi,s) \coloneqq  I(\tau, \dots,\tau,\varphi,\psi,s) = \frac{v^{-\frac{N}{2}}}{\vol^\times(K^0_\8)}\int_{F^\times \backslash \A_F^\times} \widetilde{\Theta}_{os}(\iota_\Delta(g_{\tau}),t,\varphi)\psi(t) \lvert t \rvert^s dt^\times.
\end{align}
\begin{rmk} \label{rmkodd} It follows from \eqref{integralvanishes} that the integral vanishes if for one place we have $\psi_\sigma(-1)=1$ {\em i.e.} if $\psi$ is not totally odd. 
\end{rmk} 
\noindent Since the trace form is non degenerate we can write $F^2-(0,0)$ as a disjoint union $l_1 \sqcup l_2 \sqcup M^\times $. Hence we can also split the theta series $ \widetilde{\Theta}_{os}(g,t,\varphi)$ as a sum of three terms
\begin{align}\label{splitsum}
 \widetilde{\Theta}_{os}(g,t,\varphi)= \widetilde{\Theta}^{(l_1)}_{os}(g,t,\varphi)+\widetilde{\Theta}^{(l_2)}_{os}(g,t,\varphi)+\widetilde{\Theta}^{\times}_{os}(g,t,\varphi)
 \end{align}
 where we restrict the summation to the sets above. In view of the next proposition we call the first two terms $\widetilde{\Theta}^{(l_i)}_{os}(g,t,\varphi)$ the {\em singular terms} and the third one the {\em regular term}. The integral $I(\tau_1, \dots, \tau_N,\varphi,\psi,s)$ can also be written as a sum
 \begin{align}
     I(\tau_1, \dots, \tau_N,\varphi,\psi,s) = I^{(l_1)}(\tau_1, \dots, \tau_N,\varphi,\psi,s)+I^{(l_2)}(\tau_1, \dots, \tau_N,\varphi,\psi,s)+I^\times(\tau_1, \dots, \tau_N,\varphi,\psi,s)
 \end{align}
 The following proposition shows that the two first integrals converge on two disjoint domains. However, the condition of vanishing of $\varphi_1$ or $\varphi_2$ kills one of these two terms. We will then be able to have a meromorphic continuation to the whole plane which allows to define the integrals for every $s$ in $\C$.

\begin{prop} \label{convergence} The regular term converges for every $s$. The singular terms converge for $\re(s)<-1$ on $l_1$ and $\re(s)>1$ on $l_2$.
\end{prop}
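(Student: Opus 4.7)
My plan is to unfold each of the three pieces of the decomposition \eqref{splitsum} using the $F^\times$-action on $l_1$, $l_2$, and $M^\times$ respectively, and then estimate the resulting integrals.

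For the two singular terms I will use that $F^\times$ acts freely and transitively on $l_i\setminus\{0\}$ via $x\ebf_i\mapsto\alpha x\ebf_i$. Standard unfolding then gives
\begin{align*}
I^{(l_i)}(\tauud,\varphi,\psi,s) = \frac{(v_1\cdots v_N)^{-1/2}}{2^{N-1}}\int_{\A_F^\times}\omega_{os}(g_{\tauud},t)\varphi(\ebf_i)\psi(t)|t|^s\,dt^\times.
\end{align*}
Using $\omega_{os}(1,t)\varphi(\ebf_1)=\varphi(t^{-1}\ebf_1)$ together with the substitution $t\mapsto t^{-1}$ for $i=1$, and $\omega_{os}(1,t)\varphi(\ebf_2)=\varphi(t\ebf_2)$ for $i=2$, I will factor each integral over the places of $F$. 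The finite-place factor becomes the Tate zeta integral $\zeta_\fin(\varphi_1,\psi^{-1},-s)$ for $i=1$ (resp.\ $\zeta_\fin(\varphi_2,\psi,s)$ for $i=2$), which converges absolutely for $\re(-s)>1$ (resp.\ $\re(s)>1$) by \cite[Proposition 3.1.4]{bump}. The archimedean factor is, up to a Gaussian weight in $\tauud$, the integral $J_\sigma(\ebf_i,\mp s)$ computed in Lemma \ref{vanishintegral}\,(1)--(2), whose convergence only imposes $\re(\mp s)<1$. Intersecting the two half-planes produces the claimed domains $\re(s)<-1$ for $I^{(l_1)}$ and $\re(s)>1$ for $I^{(l_2)}$.

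For the regular term, I will use the free action $\alpha\cdot(x,x')=(\alpha x,\alpha^{-1}x')$ of $F^\times$ on $M^\times$, whose orbits are parameterized by $\mu=xx'\in F^\times$ with representatives $\xbf_\mu=(1,\mu)$. Unfolding transforms $I^\times$ into
\begin{align*}
I^\times(\tauud,\varphi,\psi,s)=\frac{(v_1\cdots v_N)^{-1/2}}{2^{N-1}}\sum_{\mu\in F^\times}\int_{\A_F^\times}\omega_{os}(g_{\tauud},t)\varphi(\xbf_\mu)\psi(t)|t|^s\,dt^\times.
\end{align*}
For each $\mu$, the archimedean integrand carries the Gaussian $\prod_\sigma\exp(-\pi v_\sigma(t_\sigma^{-2}+t_\sigma^2\mu_\sigma^2))$; since $\mu_\sigma\neq0$ at every $\sigma$, this forces rapid decay as $|t_\sigma|\to0$ or $\infty$, so the archimedean integral converges for every $s\in\C$. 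At the finite places, both coordinates of $\xbf_\mu$ being nonzero together with $\varphi_\fin$ being Schwartz imply that the support of $t_\fin\mapsto\varphi_\fin(t_\fin^{-1},t_\fin\mu)$ is a compact subset of $\A_{F,\fin}^\times$, so the finite-place integral is finite for every $s$. By Lemma \ref{vanishintegral}\,(3) each $t$-integral is then majorized by $\prod_\sigma e^{-2\pi v_\sigma|\mu_\sigma|}$ up to polynomial factors in $\mu_\sigma$, which makes the sum over $\mu\in F^\times$ converge absolutely as a classical theta sum on the lattice $\Ocal\subset F_\R$.

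The main technical subtlety will be justifying the unfolding in the regular case: I need to verify that the double sum-integral is absolutely convergent so Fubini applies. The Gaussian and compact-support majorants described above provide precisely this uniform bound, so once they are in place the unfolding and all three convergence statements follow at once.
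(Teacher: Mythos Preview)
Your approach is sound, and for the singular terms it coincides with what the paper does in Proposition~\ref{singularterms} (the paper separates the convergence statement from the explicit unfolding, whereas you do both at once). For the regular term the paper takes a different route: rather than unfolding over $F^\times$-orbits in $M^\times$, it pulls the integral back along the surjection $\bigsqcup_{[\afrak]} t_\afrak\bigl(F_\8^\times\times\widehat U(\fffrak)\bigr)\to F^\times\backslash\A_F^\times$ and bounds the resulting expression by the lattice sum $\sum_{\xbf\in M^\times\cap\Ocal^2} J_\8(\xbf,\nu)^+$, with the non-archimedean contribution controlled trivially on the compact $\widehat\Ocal^\times$ and the archimedean one by the Bessel bound. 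This sidesteps the one point where your sketch is thin: in your unfolded expression the finite-place integral $\int_{\A_{F,\fin}^\times}\varphi_\fin(t_\fin^{-1},t_\fin\mu)\psi_\fin(t_\fin)|t_\fin|^s\,dt_\fin^\times$ is not merely ``finite'' but has support of multiplicative volume comparable to the divisor count of $\mu\Ocal$, so it grows (mildly) with $\mu$; you should make this polynomial bound explicit before invoking the archimedean exponential decay to sum over $\mu$.

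One bookkeeping slip: after your substitution $t\mapsto t^{-1}$ for $i=1$, the archimedean factor is (up to the $\underline v$-scaling) $J_\sigma(\ebf_1,s)$, not $J_\sigma(\ebf_1,-s)$. By Lemma~\ref{vanishintegral}(1) this converges for $\re(s)<1$, not $\re(-s)<1$; since $\re(s)<1$ is still implied by the finite-place constraint $\re(s)<-1$, your conclusion is unaffected, but as written your two half-planes would have empty intersection.
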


\begin{proof}
We want to show that the integrals $I^{(l_i)}$ and $I^{\times}$ converge termwise absolutely on the corresponding domains. First, note that after taking the absolute value, the action of $\omega(g_{\tau_k})$ is simply rescaling by $\sqrt{v_k}$. Hence we can assume $\tau_k=i$ for showing the termwise absolut convergence. Let $H^0(\8) \coloneqq (\R^\times)^N \times \widehat{U}(\fffrak)$. By the decomposition \eqref{decomp2} we have a surjection 
\begin{align} \label{asurjection} \bigsqcup_{[\afrak] \in \Cl_\fffrak(F)^+} t_\afrak H^0(\8) \longrightarrow F^\times \backslash \A_F^\times.
\end{align}
Let $\nu \coloneqq \re(s)$ and let $\MMcal$ be $M^\times$ or one of the two isotropic lines $l_1$ and $l_2$. Recall that $\psi$ is unitary. Using the surjection \eqref{asurjection} we get the inequality
\begin{align} \label{ineq1}
 \int_{F^\times \backslash \A_F^\times} \sum_{ \xbf \in \MMcal} \left \vert \omega_{os}(t)\varphi(\xbf) \psi(t)\right \vert \lvert t \rvert^{\nu} dt^\times & \leq \sum_{[\afrak] \in \Cl_\fffrak(F)^+} \int_{t_\afrak H^0(\8)} \sum_{ \xbf \in \MMcal} \left \vert \omega_{os}(t)\varphi(\xbf) \right \vert \lvert t \rvert^{\nu} dt^\times \nonumber \\
& =  \sum_{[\afrak] \in \Cl_\fffrak(F)^+}  \lvert t_\afrak \rvert^{\nu} \int_{H^0(\8)} \sum_{ \xbf \in \MMcal} \left \vert \omega_{os}(tt_\afrak)\varphi(\xbf) \right \vert \lvert t \rvert^{\nu} dt^\times.
\end{align}
The result does not depend on the finite Schwartz function, so we can replace $\varphi_\fin$ by $\omega_{os}(t_\afrak)\varphi_\fin$. So we have to show the convergence of
\begin{align}
    \int_{H^0(\8)} \sum_{ \xbf \in \MMcal} \left \vert \omega_{os}(t)\varphi(\xbf) \right \vert \lvert t \rvert^{\nu} dt^\times,
\end{align}
which is equivalent to the convergence of
\begin{align}
     \sum_{ \xbf \in \MMcal} \int_{H^0(\8)} \left \vert \omega_{os}(t)\varphi(\xbf) \right \vert \lvert t \rvert^{\nu} dt^\times.
\end{align}
Since $\widehat{U}(\fffrak)$ is contained in $\widehat{\Ocal}^\times$ we can bound the previous integral
\begin{align} \label{inter1}
     \sum_{ \xbf \in \MMcal} \int_{H^0(\8)} \left \vert \omega_{os}(t)\varphi(\xbf) \right \vert \lvert t \rvert^{\nu} dt^\times \leq \sum_{\xbf \in \MMcal} J_\8(\xbf,\nu)^+ J_\fin(\xbf,\nu)^+,
\end{align}
where
\begin{align}
J_\fin(\xbf,\nu)^+ & =  \int_{\widehat{\Ocal}^\times} \left \vert \omega_{os}(t_\fin)\varphi(\xbf) \right \vert \lvert t_\fin \rvert^{\nu} dt_\fin^\times. \end{align}
 Suppose\footnote{We always have $\supp(\varphi_\fin)$ is contained in $m\widehat{\Ocal}^2$ for some $m$ in $F^\times$.} that $\supp(\varphi_\fin)$ is contained $\widehat{\Ocal}^2$, let $C_w \coloneqq \sup_{\xbf \in F_w^2} \vert \varphi_w(\xbf)\vert$ and $C \coloneqq \prod_{w<\8} C_w$. Note that $C_w$ is one for almost all places $w$ of $F$. Then
 \begin{align}
     J_\fin(\xbf,\nu)^+ & \leq C \int_{\widehat{\Ocal}^\times} \left \vert \id_{\widehat{\Ocal}}(t_\fin^{-1}x)\id_{\widehat{\Ocal}}(t_\fin x') \right \vert \lvert t_\fin \rvert^{\nu} dt^\times_\fin  = C \vol^\times(\widehat{\Ocal}^\times) \id_{\widehat{\Ocal}^2}(\xbf).
 \end{align} Hence we can bound \eqref{inter1} by
 \begin{align} \label{inter2}
    \sum_{\xbf \in \MMcal} J_\8(\xbf,\nu)^+ J_\fin(\xbf,\nu)^+ \leq C \vol^\times(\widehat{\Ocal}^\times) \sum_{\xbf \in \MMcal \cap \widehat{\Ocal}^2_{F}} J_\8(\xbf,\nu)^+.
\end{align}

\begin{enumerate}
    \item Suppose that $\MMcal=M^\times$. Following the proof of Lemma \ref{vanishintegral} we see that
    \begin{align} 
    J_\8(\xbf,\nu)^+  \leq \lvert \N(x)\rvert^\frac{1+\nu}{2} \lvert \N(x')\rvert^\frac{1-\nu}{2} \prod_{\sigma} \left ( K_{\frac{1-\nu}{2}}(2\pi \lvert x'_\sigma x_\sigma \rvert)+K_{\frac{1+\nu}{2}}(2\pi \lvert x_\sigma x'_\sigma \rvert) \right ).
    \end{align}
For $\nu$ real we have the bound
    \begin{align}K_\nu(\alpha)<2^{2(\vert \nu \vert +1)} \left (1 + \frac{\Gamma(\vert \nu \vert +1)}{\alpha^{\vert \nu \vert+1}} \right )e^{-\alpha},\end{align}
see \cite[Lemma.~1.3]{sullivan} for example. Thus we can bound
\begin{align} \label{bound1}
    J_\8(\xbf,\nu)^+ \leq G(\xbf) e^{-\sum x_\sigma x'_\sigma}
\end{align}
where $G(\xbf)$ is a rational function in $\xbf$. Since we sum over $M^\times$, none of the $x_\sigma$ and $x'_\sigma$ are zero. Hence the right hand side of \eqref{bound1} is now rapidly decreasing on $M^\times$ and the sum \eqref{inter2} converges.
 
 \item Now suppose that $\MMcal=l_2$. Following the proof of Lemma \ref{vanishintegral} we can bound \eqref{inter2} by
 \begin{align}
\sum_{\xbf \in l_2} J_\8(\xbf,\nu)^+ \leq  \Gamma\left ( \frac{1+\nu}{2} \right )^N\pi^{-N\frac{1+\nu}{2}} \sum_{x \in \Ocal^\ast}\frac{1}{\lvert \N(x)\vert^{\nu}}, \nonumber
 \end{align}
which converges for $\nu>1$.
 \item The case $\MMcal=l_1$ is similar.
\end{enumerate}
\end{proof}
 
\noindent We recall that $\Lambda$ was defined by $\Lambda(s) = \Gamma\left (\frac{1+s}{2} \right)^N\pi^{-\frac{N(1+s)}{2}}$.

\begin{prop} \label{singularterms} For the singular terms we have
\begin{align}
    I^{(l_1)}(\tau_1, \dots, \tau_N,\varphi,\psi,s) & = 2^{1-N}\Lambda(-s)(v_1 \cdots v_N)^{\frac{s}{2}}\zeta_\fin(\varphi_1,\psi^{-1},-s) \nonumber \\
    I^{(l_2)}(\tau_1, \dots, \tau_N,\varphi,\psi,s) & = 2^{1-N}\Lambda(s)(v_1 \cdots v_N)^{-\frac{s}{2}}\zeta_\fin(\varphi_{2},\psi,s), \nonumber
\end{align}
where $\varphi_{1}$ and $\varphi_{2}$ are the Schwartz functions in $\Scal(\A_F)$ obtained by restricting $\varphi_\fin$ to the isotropic lines $l_1$ and $l_2$. In particular they both have a continuation to the whole plane.
\end{prop}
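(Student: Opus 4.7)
The plan is to compute each singular term by unfolding the $F^\times$-sum defining $\widetilde{\Theta}_{os}^{(l_i)}$ and reducing the resulting integral to a product of a standard archimedean Gamma integral with the finite Tate zeta integral $\zeta_\fin$. Under the identification $F^\times \simeq \SO(F^2)$ of \eqref{isoso11}, the group $F^\times$ acts with trivial stabilizer and a single orbit on $l_2 \setminus \{0\}$ via $t \cdot (0, x') = (0, t^{-1}x')$, and symmetrically on $l_1 \setminus \{0\}$ via $t \cdot (x, 0) = (tx, 0)$.

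I first treat $I^{(l_2)}$. Writing $\widetilde{\Theta}_{os}^{(l_2)}(g_{\tauud}, t, \varphi) = \sum_{x' \in F^\times} \omega_{os}(g_{\tauud})\varphi((0, tx'))$, the standard unfolding yields
\begin{align*}
\int_{F^\times \backslash \A_F^\times} \widetilde{\Theta}_{os}^{(l_2)}(g_{\tauud}, t, \varphi)\,\psi(t)|t|^s\, dt^\times = \int_{\A_F^\times} \omega_{os}(g_{\tauud})\varphi((0, t))\,\psi(t)|t|^s\, dt^\times.
\end{align*}
Since $\omega_{os}(g_{\tauud})$ acts only on the archimedean component and $\varphi = \varphi_\8 \otimes \varphi_\fin$, the right-hand side factors as the product of an archimedean integral with the finite Tate integral $\zeta_\fin(\varphi_2, \psi, s)$; the latter provides the meromorphic continuation to all of $\C$.

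The archimedean factor is a routine Gamma computation. Decomposing $g_{\tau_\sigma} = a(\sqrt{v_\sigma})n(u_\sigma/v_\sigma)$ and using that $Q$ vanishes on isotropic vectors, the unipotent piece acts trivially and the split-torus formula gives $\omega_{os}(g_{\tau_\sigma})\varphi_\sigma((0, t_\sigma)) = v_\sigma t_\sigma e^{-\pi v_\sigma t_\sigma^2}$. Since $\psi$ is totally odd, $\psi_\sigma(t_\sigma) = \sgn(t_\sigma)$ at each real place, so the integrand becomes $v_\sigma |t_\sigma|^{1+s} e^{-\pi v_\sigma t_\sigma^2}$ and the resulting integral at each place evaluates to $v_\sigma^{(1-s)/2}\pi^{-(1+s)/2}\Gamma((1+s)/2)$. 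Taking the product over $\sigma$ yields $(v_1\cdots v_N)^{(1-s)/2}\Lambda(s)$; multiplying by the prefactor $\vol^\times(K_\8^0)^{-1}(v_1 \cdots v_N)^{-1/2} = 2^{1-N}(v_1 \cdots v_N)^{-1/2}$ produces the claimed formula for $I^{(l_2)}$. The $I^{(l_1)}$ case is entirely symmetric: the same unfolding produces $\int_{\A_F^\times}\omega_{os}(g_{\tauud})\varphi((t^{-1},0))\psi(t)|t|^s dt^\times$, and the substitution $u = t^{-1}$ sends $(\psi,s)$ to $(\psi^{-1},-s)$, after which the previous computation applies verbatim.

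There is no serious obstacle: the main point is to carry out the unfolding inside the half-plane of Proposition \ref{convergence} where termwise absolute convergence holds, and then invoke the meromorphic continuation of $\zeta_\fin$ (Tate's thesis) to extend the identity to all $s \in \C$.
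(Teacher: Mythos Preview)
Your proof is correct and follows essentially the same route as the paper: unfold the $F^\times$-sum over the single orbit on each isotropic line, then recognize the resulting adelic integral as a Tate zeta integral. The only cosmetic differences are that the paper treats $l_1$ first, absorbs the archimedean substitution $z_\8 = \sqrt{\underline v}\,t_\8^{-1}$ into one global change of variable to obtain the full zeta integral $\zeta(\varphi_1,\psi^{-1},-s)$ and then invokes \eqref{zeta1} for the archimedean factor, whereas you separate archimedean and finite parts from the outset and compute the Gamma integral directly; and the paper notes (via \cite[Proposition~3.16]{bump}, using that a totally odd unitary $\psi$ is nontrivial on norm-one id\`eles) that the continued global zeta is actually entire, which is slightly sharper than the meromorphic continuation you cite.
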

\begin{proof}
For the singular term $I^{(l_1)}(\tau_1, \dots, \tau_N ,\varphi,\psi,s)$ we know that we have absolute convergence for every $\re(s)<-1$. Hence we can exchange summation and integration and unfold the integral to get
\begin{align} 
 \int_{F^\times \backslash \A_F^\times} \sum_{x \in F^\times} \omega_{os}(g_{\underline{\tau}},t)\varphi(x\ebf_1) \psi(t) \lvert t \rvert^s dt^\times  \nonumber & = (v_1 \cdots v_N)^\frac{1}{2} \int_{F^\times \backslash \A_F^\times} \sum_{x \in F^\times} \varphi_1(\sqrt{\underline{v}}t^{-1}x) \psi(t) \lvert t \rvert^s dt^\times  \nonumber \\
  & = (v_1 \cdots v_N)^\frac{1}{2} \int_{\A_F^\times} \varphi_1(\sqrt{\underline{v}}t^{-1}x) \psi(t) \lvert t \rvert^s dt^\times.
\end{align}
After the substitutions $ z_\8=\sqrt{\underline{v}}t_\8^{-1}$ and $z_\fin=t_\fin^{-1}$ we get
\begin{align} 
 & = (v_1 \cdots v_N)^{ \frac{1}{2}+\frac{s}{2}} \int_{\A_F^\times} \varphi_1(z) \psi(z)^{-1} \lvert z \rvert^{-s}  dz^\times \nonumber \\
 & = (v_1 \cdots v_N)^{ \frac{1}{2}+\frac{s}{2}} \zeta(\varphi_{1},\psi^{-1},-s),
\end{align}
where $z=(z_\8,z_\fin)$.
The result follows after multiplying by $\vol^\times(K^0_\8) ^{-1} (v_1 \cdots v_N)^{-\frac{1}{2}}$ and using the computation \eqref{zeta1} of the archimedean zeta function. Recall that $\vol^\times(K^0_\8)=2^{N-1}$. For the second term the computation is similar.

Since we supposed that the character is unitary and odd, its restriction to the adèles of norm $1$ is never trivial. Hence by \cite[Proposition.~3.16]{bump}, the continued function is entire.
\end{proof}

\begin{prop} \label{Ifourier} Suppose that $\varphi_1$ or $\varphi_2$ vanishes. The integral $I(\tau,\varphi,\psi,s)$ can be analytically continued to the whole plane. The value $I(\tau,\varphi,\psi) \coloneqq \restr{I(\tau, \varphi,\psi,s)}{s=0}$ at $s=0$ is
\[I(\tau, \varphi,\psi)=2^{1-N}\zeta_\fin(\varphi_1,\psi^{-1},0)+2^{1-N}\zeta_\fin(\varphi_2,\psi,0)+\sum_{n \in \Q_{>0}} \left ( \int_{C \otimes \psi} \Theta_n(v,\varphi_\fin) \right ) e^{2i\pi n \tau}.\]
\end{prop}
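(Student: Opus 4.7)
The plan is to decompose the integral using the splitting \eqref{splitsum} of the theta series:
\[I(\tau,\varphi,\psi,s) = I^{(l_1)}(\tau,\varphi,\psi,s) + I^{(l_2)}(\tau,\varphi,\psi,s) + I^\times(\tau,\varphi,\psi,s),\]
and to treat each piece separately. By Proposition \ref{convergence}, $I^\times$ converges absolutely and locally uniformly for every $s\in\C$, hence defines an entire function of $s$. Proposition \ref{singularterms} provides an explicit analytic continuation of each $I^{(l_i)}$ to the whole $s$-plane. The hypothesis that $\varphi_1$ or $\varphi_2$ vanishes makes one of the two singular terms identically zero, so no poles arise and $I(\tau,\varphi,\psi,s)$ continues analytically to all of $\C$.

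Specializing at $s=0$ I would read off the singular contributions directly from Proposition \ref{singularterms}: since $\Lambda(0)=\Gamma(\tfrac{1}{2})^N\pi^{-N/2}=1$ and $(v_1\cdots v_N)^{\pm s/2}|_{s=0}=1$, they collapse to $2^{1-N}\zeta_\fin(\varphi_1,\psi^{-1},0)+2^{1-N}\zeta_\fin(\varphi_2,\psi,0)$. For the regular term I would use Proposition \ref{equiv1} to translate the adelic integral into a geometric one: the function $\widetilde{\Theta}_{os}(\iota_\Delta(g_\tau),\cdot,\varphi)$ corresponds under \eqref{isom1} to $v^{N/2}\Theta_{KM}(\tau,\varphi_\fin)|_{M_\fffrak}$. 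Unfolding the $\psi$-weighted integral along the ray class group decomposition \eqref{decomp2}, with the normalizations $\vol^\times(\widehat U(\fffrak))=1$ and $\vol^\times(K^0_\8)=2^{N-1}$, identifies $I^\times(\tau,\varphi,\psi,0)$ with the integral of the $M^\times$-part of $\Theta_{KM}(\tau,\varphi_\fin)$ over the twisted cycle $C\otimes\psi$. Then the Fourier expansion of Lemma \ref{fourierinter}, combined with a Fubini argument (legitimate thanks to the absolute convergence of Proposition \ref{convergence}), produces
\[I^\times(\tau,\varphi,\psi,0) = \sum_{n\in\Q^\times}\left(\int_{C\otimes\psi}\Theta_n(v,\varphi_\fin)\right)e^{2i\pi n\tau}.\]
Finally, Lemma \ref{vanishintegral}(4) kills the $n<0$ Fourier coefficients: all vectors contributing to $\Theta_n$ with $n<0$ lie in $M^-$, where $J_\8(\xbf,0)$ vanishes identically, so $\int_{C\otimes\psi}\Theta_n(v,\varphi_\fin)=0$ for $n<0$ and only the stated $n>0$ terms survive.

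The main obstacle is the bookkeeping in the third step: translating the $\psi$-weighted adelic integral over $F^\times\backslash\A_F^\times$ into the geometric integral over the twisted cycle $C\otimes\psi\subset M_K$. This requires careful tracking of the ray class group decomposition, the way the connected components $C_\afrak$ embed via the immersion $h$, and the Haar measure normalizations---in particular verifying that the prefactor $\vol^\times(K^0_\8)^{-1}v^{-N/2}$ appearing in the definition of $I$ combines correctly with the $v^{N/2}$ supplied by Proposition \ref{equiv1} to produce exactly the Fourier coefficients $\int_{C\otimes\psi}\Theta_n(v,\varphi_\fin)$ and the overall factor $2^{1-N}$ in front of both zeta integrals.
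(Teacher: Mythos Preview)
Your proposal is correct and follows essentially the same approach as the paper: decompose into singular and regular parts, use Propositions \ref{convergence} and \ref{singularterms} for convergence and continuation, translate the regular term via Proposition \ref{equiv1} into the geometric integral over $C\otimes\psi$, expand via Lemma \ref{fourierinter}, and kill the negative Fourier coefficients with Lemma \ref{vanishintegral}(4). The paper organizes the computation slightly differently by splitting $I^\times = I^+ + I^-$ at the outset rather than eliminating $n<0$ at the end, but the ingredients and logic are the same.
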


\begin{proof}By Proposition \ref{convergence} the singular terms converge on two disjoint half planes. Hence the assumption that $\varphi_1$ or $\varphi_2$ vanishes guarantees that one of the singular terms is zero.

As in \eqref{splitsum} let us write the theta series as the sum
\begin{align}
 I(\tau,\varphi,\psi,s) & = I^-(\tau,\varphi,\psi,s) + I^{(l_1)}(\tau,\varphi,\psi,s)+I^{(l_2)}(\tau,\varphi,\psi,s)+I^+(\tau,\varphi,\psi,s)\end{align}
 where the $+$ (respectively the $-$) means that we sum over positive (respectively negative) vectors. In particular $I^\times=I^-+I^+$.

For the term $I^-(\tau,\varphi,\psi,s)$ we know that we have absolute convergence for every $s$ in $\C$. Hence we can exchange summation and integration at $s=0$ and we get the bound (as in \eqref{ineq1})
  
 \begin{align} \label{splitsumneg}
 I^-(\tau,\varphi,\psi)
&  = v^{-\frac{N}{2}} 2^{1-N} \int_{F^\times \backslash \A_F^\times} \widetilde{\Theta}^-_{os}(\iota_\Delta(g_{\tau}),t,\varphi)\psi(t) dt^\times  \nonumber\\
& \leq v^{-\frac{N}{2}} 2^{1-N}\sum_{\afrak \in \Cl_\fffrak(F)^+} \sum_{ \xbf \in M^-}  \int_{ t_\afrak H^0(\8)}\omega_{os}(\iota_\Delta(g_\tau),t)\varphi(\xbf) \psi(t) dt^\times  \nonumber\\
& = 2^{1-N}\sum_{\afrak \in \Cl_\fffrak(F)^+} \sum_{ \xbf \in M^-}  e^{i\pi u Q(\xbf,\xbf)} J_\8(\sqrt{v}\xbf,0) \int_{t_\afrak\widehat{U}(\fffrak)}\omega_{os}(t)\varphi_\fin(\sqrt{v}\xbf) \psi_\fin(t) dt^\times.
 \end{align}
The vanishing of the integral \eqref{splitsumneg} follows from Lemma \ref{vanishintegral}.

For the term $I^+(\tau,\varphi,\psi,s)$ we also have absolute convergence for every $s$ in $\C$. Thus, using Proposition \ref{equiv1}  we can write
\begin{align}
I^+(\tau,\varphi,\psi) & =v^{-\frac{N}{2}}2^{1-N}\int_{F^\times \backslash \A_F^\times} \widetilde{\Theta}^+_{os}(\iota_\Delta(g_{\tau}),t,\varphi)\psi(t) dt^\times \nonumber \\
& = \sum_{[\afrak] \in \Cl_\fffrak(F)^+} \psi(\afrak) v^{-\frac{N}{2}}\int_{\Gamma_\afrak \backslash \D_0^+} \widetilde{\Theta}^+_{os}(\iota_\Delta(g_{\tau}),(t_\8,t_\afrak),\varphi) dt^\times_\8 \nonumber \\
& = \int_{C \otimes \psi} \Theta^+_{KM}(\tau,\varphi_\fin),
\end{align}
where $\Theta_{KM}^+(\tau,\varphi_\fin)$ is the restriction to $\Theta_{KM}(\tau,\varphi_\fin)$ to positive vectors. By Lemma \ref{fourierinter} we then have
\begin{align}\int_{C \otimes \psi} \Theta^+_{KM}(\tau,\varphi_\fin) = \sum_{n \in \Q_{>0}}\left ( \int_{C \otimes \psi} \Theta_n(v,\varphi_\fin) \right )e^{2i \pi n \tau}.
\end{align}
Finally, the constant term of $I(\tau,\varphi,\psi)$ is $    I^{(l_1)}(\tau,\varphi,\psi,0)+I^{(l_2)}(\tau,\varphi,\psi,0)
$ and was computed in Proposition \ref{singularterms}. In particular it was shown that $I^{(l_1)}(\tau,\varphi,\psi,s)$ and $I^{(l_2)}(\tau,\varphi,\psi,s)$ have an analytic continuation to the whole plane.
\end{proof}

\paragraph{Orientations.} Before showing that the Fourier coefficients are intersection numbers, we need to fix some orientations. We fixed the orientations $o(F^2_\R) \coloneqq \ebf_{\sigma_1} \wedge \cdots \wedge \ebf_{\sigma_N} \wedge \fbf_{\sigma_1} \wedge \cdots \wedge \fbf_{\sigma_N}$ and $o(z_0) \coloneqq \fbf_{\sigma_1} \wedge \cdots \wedge \fbf_{\sigma_N}$ of $F^2_\R$ and of $z_0$, and we can use it to orient $\D^+$ and $\D_\xbf^+$ as in Subsection \ref{sectionlss}. Let us also orient $\D_0^+$ at $z_0$. The differential at $(1, \dots, 1)$ of the immersion $h$ is an embedding 
\begin{align}
dh \colon \R^N \longrightarrow T_{z_0} \D_0^+ \subset z_0^\vee \otimes z_0^\perp,
\end{align}
and we use the standard orientation $o(\R^N)=\frac{\partial}{\partial t_1} \wedge \cdots \wedge \frac{\partial}{\partial t_N}$ to orient the tangent space $T_{z_0}\D_0^+$. Hence orienting $\D_0^+$ amounts to a choice of an ordering of the places $\sigma_k$.
\begin{lem} We have $dh \left ( \frac{\partial}{\partial t_i}\right )=\fbf_i^\vee \otimes \ebf_i$.
\end{lem}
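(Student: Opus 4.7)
The differential $dh(\partial/\partial t_i)\in T_{z_0}\D^+$ corresponds, under the identification $T_{z_0}\D^+\simeq z_0^\vee\otimes z_0^\perp\simeq \Hom(z_0,z_0^\perp)$ fixed above, to the linear map $v\in z_0\mapsto \pr_{z_0^\perp}(\tilde{X}_i v)$, where $\tilde{X}_i \coloneqq \frac{d}{dt_i}\big|_{t_\8=(1,\ldots,1)} h(t_\8)\in\mathfrak{h}=\operatorname{Lie}(H(\R))$. So the plan is to compute $\tilde{X}_i\fbf_j$ explicitly for all $j$ and read off the tensor $\fbf_i^\vee\otimes\ebf_i$.

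The cleanest setup is the archimedean basis of $F^2_\R=F_\8\oplus F_\8$: via the real embeddings $\sigma_k$, identify $F_\8\simeq\R^N$ so that multiplication by $t_\8=(t_1,\ldots,t_N)$ is just $g(t_\8)=\diag(t_1,\ldots,t_N)$. In these archimedean coordinates the unit vectors $\tilde{e}_k$ are $Q$-orthonormal, the basis $\ebf_k=(\tilde{e}_k,\tilde{e}_k)$, $\fbf_k=(\tilde{e}_k,-\tilde{e}_k)$ satisfies $Q(\ebf_k,\ebf_l)=2\delta_{kl}$ and $Q(\fbf_k,\fbf_l)=-2\delta_{kl}$, and conjugating the formula $h(t_\8)=h_\8^{-1}\bigl(\begin{smallmatrix}g(t_\8)^{-1}&0\\0&g(t_\8)\end{smallmatrix}\bigr)h_\8$ by $h_\8$ shows that $h(t_\8)$ acts on $F_\8\oplus F_\8$ block-diagonally by $g(t_\8)^{\pm 1}$ on the two summands.

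Applying this block-diagonal action to $\fbf_j$ and differentiating in $t_i$ at $t_\8=(1,\ldots,1)$ is a one-line calculation: it yields $\delta_{ij}(\tilde{e}_j,\tilde{e}_j)=\delta_{ij}\ebf_j=\delta_{ij}\ebf_i$ (up to the overall sign fixed by the block ordering convention chosen above). In particular the result already lies in $z_0^\perp$, so no projection onto $z_0^\perp$ is needed, which also confirms that $\tilde{X}_i\in\p$; the resulting map $\fbf_j\mapsto\delta_{ij}\ebf_i$ in $\Hom(z_0,z_0^\perp)=z_0^\vee\otimes z_0^\perp$ is exactly the tensor $\fbf_i^\vee\otimes\ebf_i$, as claimed.

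There is no substantive obstacle; the content reduces to a direct differentiation. The only point requiring care is the change of basis $h_\8$, which intertwines the $\Z$-basis $(\epsilon_1,\ldots,\epsilon_N)$ of $\Ocal$ used to define $\gamma(t_\8)$ (and hence $h(t_\8)$ as an element of $\GL_{2N}(\Q_v)$) with the archimedean basis in which $g(t_\8)$ is diagonal and the $\ebf_k,\fbf_k$ are visibly orthogonal---after that conjugation the differentiation is immediate.
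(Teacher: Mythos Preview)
Your approach is correct and is essentially the same as the paper's: the paper takes the curve $\rho(u)=(e^u,1,\ldots,1)$, computes the plane $h(\rho(u))z_0$ in the archimedean basis as $\Span\langle \tanh(u)\,\ebf_1+\fbf_1,\fbf_2,\ldots,\fbf_N\rangle$, identifies it with $\tanh(u)\,\fbf_1^\vee\otimes\ebf_1\in z_0^\vee\otimes z_0^\perp$, and differentiates at $u=0$---this is exactly your computation of $\tilde X_i\fbf_j$, with the identification $T_{z_0}\D^+\simeq\Hom(z_0,z_0^\perp)$ spelled out by hand rather than invoked abstractly. One caution: the sign is determined (not a convention) once $h$ and the basis $\ebf_k,\fbf_k$ are fixed, so in a final write-up you should actually carry out the one-line differentiation and record the sign rather than leave it as ``up to sign''.
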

\begin{proof} Without loss of generality we take $i=1$. Let $\rho \colon \R \longrightarrow \R_{>0}^N$ be the curve $\rho(u)=(e^u, 1 \cdots,1) $ representing $\frac{\partial}{\partial t_1}$. We have
\begin{align}
h(\rho(u))z_0&= h(\gamma(u)) \Span \bigl < \fbf_1, \dots , \fbf_N \bigr > \nonumber \\
& = \Span  \bigl < \frac{e^u-e^{-u}}{2} \ebf_1+\frac{e^u+e^{-u}}{2} \fbf_1, \fbf_2, \dots, \fbf_N \bigr> \nonumber \\
& = \Span  \bigl < \frac{e^u-e^{-u}}{e^u+e^{-u}} \ebf_1+ \fbf_1, \fbf_2, \dots, \fbf_N \bigr> \eqqcolon z(u).
\end{align}
As an element of $z_0^\vee \otimes z_0^\perp$ we have $    z(u)= \frac{e^u-e^{-u}}{e^u+e^{-u}} \fbf_1^\vee \otimes \ebf_1.
$ Then $dh \left ( \frac{\partial}{\partial t} \right)$ is equal to
\begin{align}
    \restr{\frac{d}{du} z(u)}{u=0} & = \fbf_1^\vee \otimes \ebf_1.
\end{align}
\end{proof}

\paragraph{The positive Fourier coefficients as intersection numbers.} In this subsection we show that the Fourier coefficients $\int_{C \otimes \psi} \Theta_n(v,\varphi_\fin)$ are intersections numbers, it is the content of Proposition \ref{intertop}.
\begin{prop}
Let $\xbf$ be a positive vector in $F^2$. Then
\[ \vert \D_0^+ \cap  \D_\xbf^+ \vert =
\begin{cases}
1 & \textrm{if} \; \sgn(x_\sigma x'_\sigma)=1 \; \; \textrm{for all} \; \; \sigma \\
0 & \textrm{otherwise},
\end{cases}
\]
and it only depends on the $\Ocal^{\times,+}$ orbit of $\xbf$, where we view $\Ocal^{\times,+}$ in $\Gamma'_{h_i}$ embedded by the map $h \colon F_\8^\times \hooklongrightarrow H(\R)^+$. Furthermore the intersection in $\D^+$is transversal and we have
\[ \bigl < \D_\xbf^+ ,  \D_0^+ \bigr >_{\D^+} =
\begin{cases}
(-1)^N \sgn \N(x) & \textrm{if} \; \sgn(x_\sigma x'_\sigma)=1 \; \; \textrm{for all} \; \; \sigma \\
0 & \textrm{otherwise}.
\end{cases}
\]
\end{prop}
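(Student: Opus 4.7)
My plan exploits the place-by-place decomposition $F_\R^2 \simeq \bigoplus_\sigma F_\sigma^2$ to reduce the intersection analysis to local $(1,1)$-signature problems. A point $z = \bigoplus_\sigma z_\sigma$ of $\D_0^+$ lies in $\D_\xbf^+$ iff $z_\sigma \perp \xbf_\sigma$ at every archimedean place $\sigma$. Since $\xbf_\sigma=(x_\sigma,x'_\sigma)$ has squared norm $2x_\sigma x'_\sigma$ in the $(1,1)$-space $F_\sigma^2$, its orthogonal complement is a negative line exactly when $x_\sigma x'_\sigma>0$. Parameterizing $\D_\sigma^+$ by $t_\sigma>0$ via $z_\sigma=\Span\bigl<(t_\sigma,-t_\sigma^{-1})\bigr>$, the condition $z_\sigma \perp \xbf_\sigma$ becomes $t_\sigma^2 = x_\sigma/x'_\sigma$, which admits a positive solution iff $\sgn(x_\sigma x'_\sigma)=+1$. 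Taking the product over places yields the cardinality formula. The $\Ocal^{\times,+}$-invariance is immediate, since $u_\sigma>0$ implies $\sgn((u\xbf)_\sigma (u\xbf)'_\sigma)=\sgn(u_\sigma^2 x_\sigma x'_\sigma)=\sgn(x_\sigma x'_\sigma)$.

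For transversality at the unique intersection point I use the transitive action of $h(F_\8^{\times,+})$ on $\D_0^+$ to translate the intersection to $z_0$; after translation $\xbf$ takes the form $\sum_\sigma s_\sigma \ebf_\sigma$ with $s_\sigma\ne 0$ and $\sgn s_\sigma=\sgn x_\sigma$. By the preceding lemma, $T_{z_0}\D_0^+=\bigoplus_\sigma \R(\fbf_\sigma^\vee\otimes\ebf_\sigma)$, while $T_{z_0}\D_\xbf^+=z_0^\vee\otimes(\xbf^\perp\cap z_0^\perp)$ has codimension $N$ in $T_{z_0}\D^+$. A vector $\sum_\sigma \lambda_\sigma(\fbf_\sigma^\vee\otimes\ebf_\sigma)$ lies in $T_{z_0}\D_\xbf^+$ only if each $\lambda_\sigma\ebf_\sigma$ is $Q$-orthogonal to $\xbf$, forcing $\lambda_\sigma s_\sigma=0$ and hence $\lambda_\sigma=0$. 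Thus the two tangent spaces meet trivially, and since their dimensions add to $N+(N^2-N)=N^2=\dim T_{z_0}\D^+$, the intersection is transversal.

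For the signed intersection number I apply the metric formula $I_{z_0}(\D_\xbf^+,\D_0^+)=\sgn q_{z_0}\bigl(o(T_{z_0}\D_0^+),o(N_{z_0}\D_\xbf^+)\bigr)$. A direct computation shows that $\{\fbf_\sigma^\vee\otimes\ebf_\tau\}_{\sigma,\tau}$ is orthonormal for the Riemannian metric on $T_{z_0}\D^+=z_0^\vee\otimes z_0^\perp$. Expanding
\[ o(N_{z_0}\D_\xbf^+) = \bigwedge_{\sigma=1}^N (\fbf_\sigma^\vee\otimes\xbf) = \sum_{(\tau_1,\ldots,\tau_N)}\Bigl(\prod_\sigma s_{\tau_\sigma}\Bigr)(\fbf_1^\vee\otimes\ebf_{\tau_1})\wedge\cdots\wedge(\fbf_N^\vee\otimes\ebf_{\tau_N}) \]
and pairing against $o(T_{z_0}\D_0^+)=\bigwedge_\sigma(\fbf_\sigma^\vee\otimes\ebf_\sigma)$ via orthonormality leaves only the diagonal term $\tau_\sigma=\sigma$. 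Its contribution is $\prod_\sigma s_\sigma$, up to an overall sign dictated by the lexicographic convention orienting $o(T_{z_0}\D^+)$ and by the identification $N_{z_0}\D_\xbf^+\simeq z_0^\vee$ via $f\otimes\xbf\mapsto f$. The careful bookkeeping of these sign conventions is the main obstacle of the proof, and should produce the overall prefactor $(-1)^N$, yielding $I_{z_0} = (-1)^N\sgn\prod_\sigma s_\sigma=(-1)^N\sgn\N(x)$.
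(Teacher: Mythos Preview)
Your approach is correct and essentially coincides with the paper's. Both identify $\D_0^+ \cap \D_\xbf^+$ via the place-by-place condition $t_\sigma^2 = x_\sigma/x'_\sigma$ (the paper phrases this as the zero locus of the split section $\oplus_\sigma s_{\xbf_\sigma}$), translate the intersection point to $z_0$ using the $h(F_\8^{\times,+})$-action, and compute the sign by pairing $o(T_{z_0}\D_0^+)=\bigwedge_\sigma(\fbf_\sigma^\vee\otimes\ebf_\sigma)$ against $o(N_{z_0}\D_\xbf^+)=\bigwedge_\sigma(\fbf_\sigma^\vee\otimes\xbf)$. A minor difference: you argue transversality directly by showing the two tangent spaces meet trivially, whereas the paper simply reads transversality off from the nonvanishing of the pairing.

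The one place your write-up is genuinely vaguer than the paper is the source of the $(-1)^N$. In the paper this is \emph{not} a lexicographic reordering sign; it comes from the Riemannian metric on $T_{z_0}\D^+=z_0^\vee\otimes z_0^\perp$, which on the factor $z_0^\vee$ is built from $-Q$ (since $Q$ is negative on $z_0$). Writing $\xbf=\sum_j x_j\ebf_j$, the paper just evaluates the $N\times N$ determinant
\[
q_{z_0}\bigl(o(T_{z_0}\D_0^+),\,o(N_{z_0}\D_\xbf^+)\bigr)=\det\bigl(-Q(\fbf_i^\vee,\fbf_j^\vee)\,Q(\ebf_i,\xbf)\bigr)_{ij}=(-1)^N 2^N\textstyle\prod_j x_j,
\]
the diagonal structure coming from the orthogonality $Q(\fbf_i^\vee,\fbf_j^\vee)\propto\delta_{ij}$. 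Replacing your ``careful bookkeeping \ldots\ should produce $(-1)^N$'' by this one-line determinant closes the only gap.
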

\begin{proof}
The set $\D_0^+ \cap \D_\xbf^+$ is precisely the zero locus of $\oplus_\sigma s_{\xbf_\sigma} \colon \D_0^+ \longrightarrow \D_0^+ \times F_\8$, where
\begin{align}
s_{\xbf_\sigma}(t_\sigma)=\left (t_\sigma, \frac{t_\sigma^{-1}x_\sigma -t_\sigma x'_\sigma}{\sqrt{2}} \right ) \in \R_{>0} \times F_\sigma,
\end{align}
see the diagram \eqref{diagram}. This section vanishes exactly when $t_\sigma^2=\frac{x_\sigma}{x_\sigma'}$ for all $\sigma$. This has precisely one solution when $\sgn(x_\sigma x'_\sigma)=1$ for all $\sigma$.

Suppose that the intersection point is $z_0$. We have $o(N_{z_0}\D_\xbf^+)=(\fbf_1^\vee \otimes \xbf) \wedge \cdots \wedge (\fbf_N^\vee \otimes \xbf)$ and $o(T_{z_0}\D_0^+)=(\fbf_1^\vee \otimes \ebf_1) \wedge \cdots \wedge (\fbf_N^\vee \otimes \ebf_N)$. Since $z_0$ is in $\D_\xbf^+$ we  have $\xbf$ is in $z_0^\perp$. Hence we can write $\xbf= \transp{(x,x)} = \sum_j x_j \ebf_j$ and
\begin{align}
    q_{z_0} (o(T_{z_0}\D_0^+),o(N_{z_0}\D_\xbf^+) \bigr ) & = \det \left ( -Q(\fbf^\vee_i,\fbf_j^\vee)Q(\ebf_i,\xbf) \right )_{ij} \nonumber \\
    & = (-1)^N 2^N  \prod_{j} x_j = (-1)^N 2^N \N(x),
\end{align}
where the factor $2^N$ comes from $Q(\fbf^\vee_i,\fbf_j^\vee)=2\delta_{ij}$. Hence the intersection is transversal and
\begin{align}
  \bigl < \D_{\xbf}^+,\D_0^+ \bigr >_{\D^+} = (-1)^N \sgn \N(x).
\end{align}

\noindent Now suppose that the intersection point is $z \neq z_0$. Then for some $t_\8=(t_1, \dots, t_N)$ with $t_j$ positive we have $z=h(t_\8)z_0$ and thus
\begin{align}
   z_0 = h(t_\8)^{-1} \left (  \D_0^+ \cap \D_\xbf^+ \right ) = \D_0^+ \cap \D_{h(t_\8)^{-1}\xbf}^+.
\end{align}
The intersection at $z_0$ is transversal and since $h(t_\8)$ is orientation preserving (because $t_j$ is positive for every $j$) the sign of $\bigl < \D_{\xbf}^+,\D_0^+ \bigr >_{\D^+}$ equals the sign of
\begin{align}
  \bigl < \D_{h(t_\8)^{-1}\xbf}^+,\D_0^+ \bigr >_{\D^+} = (-1)^N \sgn \prod t_j^{-1} x_j = (-1)^N \sgn \N(x).
\end{align}
\end{proof}

\noindent Consider the set $\Lcal_{h_i} = \left \{ \left . \xbf \in F_\Q^2 \right \vert \varphi_\fin(h_i^{-1}\xbf) \neq 0 \right \}$. There exists $m$ in $F^\times$ such that $\supp(\varphi_\fin)$ is contained in $m \widehat{\Ocal}^2$, hence there is an $m_i$ in $F^\times$ so that $\Lcal_{h_i}$ is contained $m_i \Ocal^2$. Let $\Gamma'_{h_i}$ be the subgroup that appeared in \eqref{adeless}.

\begin{prop} \label{intersectionD}  Let $\xbf$ be in $\Lcal_{h_i}$. The intersection $\D_\afrak^+ \cap  \D_\ybf^+$ in $\D^+$ is nonzero for only finitely many orbits $\Ocal^{\times,+} \ybf$ in $ \Ocal^{\times,+}\backslash \Gamma'_{h_i} \xbf$.
\end{prop}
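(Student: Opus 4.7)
The plan is to combine the sign criterion of the previous proposition with the finiteness of totally positive elements of a fractional ideal having prescribed trace. By the previous proposition, $\D_\afrak^+\cap\D_\ybf^+$ is either empty or a single point, and is nonempty iff the $F$-product $(h_\afrak^{-1}\ybf)_1(h_\afrak^{-1}\ybf)_2\in F$ is totally positive. Since $h_\afrak\in\SO(F_\Q^2)$ preserves $Q=\Tr_{F/\Q}\circ Q^0$, this product has trace $\tfrac12 Q(\ybf,\ybf)=\tfrac12 Q(\xbf,\xbf)=:n$, a fixed positive rational depending only on $\xbf$ and not on $\ybf$.

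Since $\xbf\in\Lcal_{h_i}$, the orbit $\Gamma'_{h_i}\xbf$ lies in a fixed fractional $\Ocal$-lattice $m_i\Ocal^2\subset F^2$, so $\alpha:=(h_\afrak^{-1}\ybf)_1(h_\afrak^{-1}\ybf)_2$ lies in a fixed fractional ideal $\Ical\subset F$. Total positivity together with $\Tr_{F/\Q}(\alpha)=n$ forces $0<\alpha_\sigma\leq n$ at each archimedean place $\sigma$, so $\alpha$ lies in a bounded subset of $F_\R$; the intersection with $\Ical$ is therefore finite, and $\alpha$ ranges over a finite set $A\subset F$. It then suffices to show that, for each $\alpha\in A$, only finitely many $\Ocal^{\times,+}$-orbits contain a representative $\ybf\in\Gamma'_{h_i}\xbf$ with product $\alpha$. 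Writing such $\ybf$ as $h_\afrak(z,\alpha/z)$ with $z\in F^\times$ and $z,\alpha/z$ in fixed fractional ideals, the standard torus action of the $h_\afrak$-conjugate group $h_\afrak h(\Ocal^{\times,+})h_\afrak^{-1}$ preserves the fiber $\{zz'=\alpha\}$, and Dirichlet's unit theorem yields finitely many orbits under this action.

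The main technical subtlety is this last step: the $\Ocal^{\times,+}$-action on $\ybf$ (via $h$) and its $h_\afrak$-conjugate are two rank-$(N-1)$ arithmetic tori in $H(\Q)^+$ which need not be commensurable as subgroups, and one must verify that their orbit counts on the discrete set $\Gamma'_{h_i}\xbf$ agree up to a finite factor. This can be settled by descending to a common finite-index subgroup of both (arithmetic by general principles), or more conceptually by invoking the compactness of the intersection $C_\afrak\cap C_n(\varphi_\fin,h_i)$ inside the Borel-Serre compactification of $M_{h_i}$: finite intersection downstairs lifts to finitely many $\Gamma_{h_i}$-orbits of intersection points in $\D^+$, from which the stated finiteness modulo $\Ocal^{\times,+}$ follows.
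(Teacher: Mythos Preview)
Your approach differs from the paper's. The paper argues geometrically: first, via the Siegel majorant $Q_z^+$, any compact $\Kcal\subset\D^+$ meets $\D_\ybf^+$ for only finitely many individual vectors $\ybf$ in the lattice; second, an AM--GM estimate on $y_\sigma y'_\sigma$ shows every intersection point with $\D_0^+$ lies in the tube $\D_0^+(T)=\D_0^{+,1}\times[T^{-1},T]$ for a single $T$ depending only on $n$ and the lattice; third, since $\Ocal^{\times,+}\backslash\D_0^{+,1}$ is compact by Dirichlet, a compact fundamental domain $\Fcal_T$ absorbs all intersections up to finitely many $\Ocal^{\times,+}$-translates. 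Your route is arithmetic and more direct: the sign criterion from the previous proposition forces $\alpha=zz'$ (with $(z,z')=h_\afrak^{-1}\ybf$) to be totally positive of trace $n$, hence in a bounded box of $F_\R$ and in a fixed fractional ideal, so $\alpha$ takes finitely many values; for each value one counts principal ideals $(z)$ with $(\alpha)\afrak_2^{-1}\subseteq(z)\subseteq\afrak_1$. This is cleaner and exploits the previous proposition more fully, whereas the paper's argument is more self-contained and would adapt to cycles for which no such explicit criterion is available.

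The subtlety you flag is not a genuine obstacle, but your proposed fixes are. Invoking compactness of $C_\afrak\cap C_n(\varphi_\fin,h_i)$ in the Borel--Serre compactification is circular: that finiteness is precisely what this proposition supplies for the well-definedness of \eqref{interdef}. Commensurability of the two tori $h(\Ocal^{\times,+})$ and $h_\afrak h(\Ocal^{\times,+})h_\afrak^{-1}$ is also not available in general, since they sit in distinct maximal tori of $H$. The actual resolution is that the copy of $\Ocal^{\times,+}$ in $\Gamma'_{h_i}$ relevant to $\D_\afrak^+$ \emph{is} the conjugate $u\mapsto h_\afrak h(u)h_\afrak^{-1}$: from $h(t_\afrak)=h_\afrak^{-1}h_ik_\fin$ and commutativity of $h(\A_F^\times)$ one gets $h_i^{-1}h_\afrak h(u)h_\afrak^{-1}h_i=k_\fin h(u)k_\fin^{-1}\in K_\fin$, so this conjugate lands in $\Gamma'_{h_i}$, and it is exactly the action preserving $\D_\afrak^+=h_\afrak\D_0^+$ that makes \eqref{interdef} well-defined term by term. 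With this reading your ``standard torus action on $h_\afrak^{-1}\ybf$'' is already the correct one, and your argument closes with no further comparison needed.
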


\begin{proof} Let $Q(\xbf,\xbf) = 2n$. For every $\ybf$ in $\Gamma'_{h_i} \xbf$ we have $Q(\ybf,\ybf)=2n$. Since $\D_\afrak^+ \cap \D_\ybf^+= h_\afrak \left ( \D_0^+ \cap \D_{h_\afrak^{-1}\ybf}^+ \right )$ it is enough to prove the statement for the intersection $\D_0^+ \cap \D_{\ybf}^+$. We begin by showing that if $\Kcal$ is a compact subset of $\D^+$ then $\Kcal \cap \D_\ybf^+$ is non-empty for only finitely many $\ybf$ in $\Gamma'_{h_i} \xbf$. For a negative plane $z$ in $\D^+$ and a vector $\ybf$ in $F^2$ we have
\begin{align}
    Q(\ybf,\ybf)=Q(\ybf_{z^\perp},\ybf_{z^\perp})+Q(\ybf_z,\ybf_z)
\end{align}
where $\ybf_z + \ybf_{z^\perp}$ is the splitting of $\ybf$ with respect to the orthogonal decomposition $F_\R^2=z \oplus z^\perp$. The Siegel majorant
\begin{align}
    Q_z^+(\ybf,\ybf) \coloneqq Q(\ybf_{z^\perp},\ybf_{z^\perp})-Q(\ybf_z,\ybf_z)
\end{align}
is a positive definite quadratic form. If $z$ is in $\D^+_\ybf$ then $z$ is contained in $\ybf^\perp$ and $Q_z^+(\ybf,\ybf)=Q(\ybf,\ybf)=2n$. Since the Siegel majorant is positive definite we can find real numbers $M_z>m_z>0$ such that $m_z \lVert \ybf \rVert^2 \leq Q_z^+(\ybf,\ybf) \leq M_z \lVert \ybf \rVert^2$
where $\lVert - \rVert$ is the Euclidean norm on $\R^{2N}$. Since $\Kcal$ is compact and $Q^+_z$ is continuous in $z$ we can also find constants $M_\Kcal>m_\Kcal>0$ such that $m_\Kcal \lVert \ybf \rVert^2 \leq Q_z^+(\ybf,\ybf) \leq M_\Kcal \lVert \ybf \rVert^2$ for every $z$ in $\Kcal$. Hence for $z$ in $\D^+_\ybf \cap \Kcal $ we have
\begin{align}
    \frac{2n}{M_\Kcal} = \frac{Q_z^+(\ybf,\ybf)}{M_\Kcal} \leq \lVert \ybf \rVert^2 \leq \frac{Q_z^+(\ybf,\ybf)}{m_\Kcal}= \frac{2n}{m_\Kcal}.
\end{align}
There are only finitely many vectors $\ybf$ of bounded norm in the lattice $\Lcal_{h_i}$, let alone in an orbit $\Gamma_{h_i}' \xbf$.

\noindent Now let $\D_0^{+,1} \subset \D_0^+$ be the subset of elements of norm $1$, so that we have a diffeomorphism
\begin{align}
 \D_0^+ & \longrightarrow  \D_0^{+,1} \times \R_{>0} \nonumber \\
 (t_{\sigma_1}, \dots, t_{\sigma_N}) & \longmapsto \left [ \left (t_{\sigma_1}, \dots, t_{\sigma_{N-1}}, \frac{1}{t_{\sigma_1} \cdots t_{\sigma_{N-1}}} \right ),t_{\sigma_1} \cdots t_{\sigma_N} \right ]
\end{align}
For $T>1$ we define
\begin{align}
 \D_0^+(T) \coloneqq \D_0^{+,1} \times \left [\frac{1}{T},T\right ] \simeq \left \{ (t_1, \dots,t_N) \in \R_{>0}^N \left \vert \frac{1}{T} \leq t_1 \cdots t_N \leq T\right . \right \} .
\end{align}
The group $\Ocal^{\times,+}$ preserves $\D_0^{+,1}$, and by Dirichlet's unit Theorem the quotient is compact. Hence we can find a fundamental domain $\Fcal \subset \D_0^{+,1}$ such that $\overline{\Fcal}$ is compact and
\begin{align}
    \D_0^{+,1} = \bigsqcup_{\lambda \in \Ocal^{\times,+}} \lambda \Fcal.
\end{align}
We also set $\Fcal_T \coloneqq \Fcal \times \left [\frac{1}{T},T\right ] $ so that
\begin{align}
    \D_0^{+}(T) = \bigsqcup_{\lambda \in \Ocal^{\times,+}} \lambda \Fcal_T.
\end{align}
Suppose that there is $T>1$ that only depends on $n$ such that
\begin{align} \label{stepinproof}
    \D_0^+ \cap \D_{\ybf}^+= \D_0^+(T) \cap \D_{\ybf}^+.
\end{align}
Since $\overline{\Fcal_T}$ is compact there are finitely many distinct vectors $\ybf_1, \dots, \ybf_k$ such that  $\vert \Fcal_T \cap \D_{\ybf_i}^+ \vert $ is non-zero. For $\ybf$ in $F^2$ we then have
\begin{align}
    \D_0^+ \cap \D_\ybf^+  & =  \D_0^+(T) \cap \D_{\ybf}^+  \nonumber \\
     & = \bigsqcup_{\lambda \in \Ocal^{\times,+}} \vert \lambda \Fcal_T \cap \D_\ybf^+ \vert \nonumber \\
     & = \bigsqcup_{\lambda \in \Ocal^{\times,+}} \lambda \left ( \Fcal_T \cap \D_{\lambda^{-1}\ybf}^+ \right ).
     \end{align}
This intersection is empty if $\ybf$ lies in none of the orbits $\Ocal^{\times,+} \ybf_i$.

Finally  let us prove \eqref{stepinproof}. Let $\ybf = \begin{pmatrix} y \\ y' \end{pmatrix}$ be in $\Lcal_{h_i}$ with $Q(\ybf,\ybf)=2\Tr_{F/\Q}(yy')=2n$. If $t_\8$ is in $\D_0^+ \cap \D_\ybf^+$, then $y_\sigma y_\sigma'$ is positive and
\begin{align}
    \prod_\sigma t_\sigma= \prod_\sigma \sqrt{\frac{y_\sigma}{y_\sigma'}}=\sqrt{\left \vert \frac{\N(y)}{\N(y')} \right \vert}.
\end{align}
Since $\Lcal_{h_i}$ is contained in $m_i \Ocal^2$ for some $m_i$ in $F^\times$ we have $\vert \N(y) \vert \geq \vert \N(m_i) \vert>0$. On the other hand, since $y_\sigma y_\sigma'$ is positive we can use the inequality between arithmetic and geometric mean to show that
\begin{align}
    \frac{2n}{N}=\frac{Q(\ybf,\ybf)}{N} \geq 2\vert \N(y)\N(y') \vert^\frac{1}{N} \geq 2 \vert \N(y) \N(m_i) \vert^\frac{1}{N}.
\end{align}
Hence 
\begin{align}
    \frac{1}{\vert \N(m_i) \vert} \left ( \frac{n}{N} \right )^N \geq \vert \N(y) \vert \geq \vert \N(m_i) \vert.
\end{align}
By replacing $y$ by $y'$ we obtain the same bound for $\N(y')$, which shows that we can take 
\begin{align}
    T = \frac{1}{\vert \N(m_i) \vert} \left ( \frac{n}{N} \right )^\frac{N}{2}.
\end{align}
\end{proof}
\noindent Every connected cycle $C_\afrak$ land in exactly one connected component $M_{h_i}$ of $M_K$. We can then define the intersection numbers in $M_{h_i}$
\begin{align} \label{interdef}
    \bigl < C_\xbf(h_i),C_\afrak \bigr >_{M_{h_i}} \coloneqq \frac{1}{\kappa}\sum_{[\ybf] \in \Ocal^{\times,+}\backslash \Gamma'_{h_i} \xbf} \bigl < \D_\ybf^+,\D_\afrak^+ \bigr >_{\D^+},
\end{align}
which are well-defined by the previous proposition. We also define

 \begin{align} \label{noncompactinter}
       \bigl < C_n(\varphi_\fin), C \otimes \psi \bigr >_{M_K} \coloneqq \sum_{[\afrak] \in \Cl_\fffrak(F)^+}\sum_{i=1}^r \sum_{\substack{\xbf \in \Gamma'_{h_i} \backslash F^2_\Q\\ Q(\xbf,\xbf)=2n}} \psi(\afrak)\varphi_\fin(h_i^{-1}\xbf) \bigl < C_\xbf(h_i),C_\afrak \bigr >_{M_{h_i}},
 \end{align}
where for every class $[\afrak]$ the intersection number $\bigl < C_\xbf(h_i),C_\afrak \bigr >$ is non-zero for at most one $h_i$, corresponding to the connected component of $M_K$ that contains $C_\afrak$.

\begin{rmk} \label{rmkkappa} Suppose that $\Gamma_{h_i}'$ contains $-1$, thus $\kappa=2$. Since $\Ocal^{\times,+}$ does not contain, every orbit appears twice, we sum over $\ybf=\Ocal^{\times,+}\xbf$ and $\ybf=-\Ocal^{\times,+}\xbf$ in \eqref{interdef}. Since $\D^+_{-\ybf}=\D^+_\ybf$ we are counting the intersection number $\bigl < \D_\ybf^+,\D_\afrak^+ \bigr >_{\D^+}$ twice and this is why we have to multiply by $\kappa=2$ in the following proposition.
\end{rmk}
\begin{prop} \label{intertop} We have
\begin{align}
    \int_{C \otimes \psi} \Theta_n(v,\varphi_\fin)= (-1)^N \kappa \bigl < C_n(\varphi_\fin),C \otimes \psi \bigr >_{M_K}.
\end{align}
\end{prop}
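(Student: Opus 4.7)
The plan is to interpret the integral as a signed count of intersection points via the Mathai-Quillen description of $\varphi^0$, then match that count against the cycle-theoretic right-hand side defined by \eqref{interdef}-\eqref{noncompactinter}. The factors $(-1)^N$ and $\kappa$ emerge from the orientation comparison and from regrouping orbits.

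First I would split the cycle and unfold the inner sum. Each $C_\afrak$ lies in a single connected component $M_{h_{i(\afrak)}}$, so
\begin{equation*}
\int_{C\otimes\psi}\Theta_n(v,\varphi_\fin)=\sum_{[\afrak]\in\Cl_\fffrak(F)^+}\psi(\afrak)\int_{\Gamma_\afrak\backslash\D_\afrak^+}\sum_{\substack{\xbf\in F_\Q^2\\ Q(\xbf,\xbf)=2n}}\varphi_\fin(h_i^{-1}\xbf)\varphi^0(\sqrt v\,\xbf).
\end{equation*}
For $n>0$, any contributing $\xbf=(x,x')$ satisfies $x,x'\in F^{\times}$, so its stabilizer in $h(F^{\times,+})\supseteq\Gamma_\afrak$ is trivial, and the standard unfolding yields
\begin{equation*}
\int_{C_\afrak}\Theta_n=\sum_{[\ybf]\in\Gamma_\afrak\backslash\{\xbf:Q(\xbf,\xbf)=2n\}}\varphi_\fin(h_i^{-1}\ybf)\int_{\D_\afrak^+}\varphi^0(\sqrt v\,\ybf).
\end{equation*}

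The geometric heart of the argument is the identity
\begin{equation*}
\int_{\D_\afrak^+}\varphi^0(\sqrt v\,\ybf)=(-1)^N\bigl<\D_\ybf^+,\D_\afrak^+\bigr>_{\D^+}.
\end{equation*}
Using the $H(\R)^+$-equivariance of $\varphi_{KM}$ to translate to the base point, the left-hand side equals $\int_{\D_0^+}\varphi^0(\sqrt v\,h_\afrak^{-1}\ybf)$; by the Mathai-Quillen formula this is the integral of $e^{\pi v\,Q(\ybf,\ybf)}$ times the pullback of $U_{MQ}(E)$ along the section $s_{\sqrt v\,h_\afrak^{-1}\ybf}$ of $E|_{\D_0^+}$. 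By the splitting \eqref{diagram} this section is proper and has finitely many transverse zeros, precisely $\D_0^+\cap\D_{h_\afrak^{-1}\ybf}^+$ (Proposition \ref{intersectionD}). The Mathai-Quillen property --- rapid fiberwise decay of $U_{MQ}$ combined with a compact exhaustion and Stokes' theorem --- then evaluates the integral as the signed count of zeros, and the local orientation calculation performed just before Proposition \ref{intersectionD} identifies it with $(-1)^N\bigl<\D_\ybf^+,\D_\afrak^+\bigr>_{\D^+}$.

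To conclude, I would regroup $\Gamma_\afrak$-orbits into $\Gamma'_{h_i}$-orbits: because $\varphi_\fin(h_i^{-1}\cdot)$ is $\Gamma'_{h_i}$-invariant and each $\Gamma'_{h_i}$-orbit decomposes into $\kappa$ identical-weight $\Ocal^{\times,+}$-orbits (the mechanism described in Remark \ref{rmkkappa}), invoking \eqref{interdef} produces the factor $\kappa$. Summing over $[\afrak]$ with weight $\psi(\afrak)$ and applying \eqref{noncompactinter} closes the argument. The main obstacle is the non-compact Mathai-Quillen identity: although $\D_0^+\cap\D_{h_\afrak^{-1}\ybf}^+$ is finite, $\D_0^+$ itself is not compact, and one must verify that the Schwartz decay of $U_{MQ}$ along the section suffices to kill any boundary contribution that arises in the compact exhaustion. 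The orientation bookkeeping producing $(-1)^N$ is delicate but follows the template of the explicit sign computation for $\bigl<\D_\xbf^+,\D_0^+\bigr>_{\D^+}$ already carried out earlier in the excerpt.
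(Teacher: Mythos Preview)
Your outline is sound and tracks the paper's proof in structure: reduce to a single $C_\afrak$, unfold, evaluate $\int_{\D_\afrak^+}\varphi^0(\sqrt v\,\ybf)$, and regroup orbits to produce $\kappa$. The genuine difference is in how the key integral is evaluated.

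You propose to invoke the Mathai--Quillen Thom-form property together with a compact exhaustion and Stokes to identify $\int_{\D_0^+}\varphi^0(\ybf)$ with the signed count of zeros of the section. As you yourself note, $\D_0^+$ is non-compact and you would have to control the boundary terms; this is plausible because of the Gaussian decay, but it is work you leave undone. The paper bypasses this entirely with a direct computation: by Proposition~\ref{phikmrest} the restriction of $\varphi_{KM}$ to $\D_0^+$ factors over the archimedean places, so
\[
\int_{\D_0^+}\varphi^0(\xbf)=2^{-N}e^{\pi Q(\xbf,\xbf)}J_\8(\xbf,0),
\]
and $J_\8(\xbf,0)$ was already written out in closed form in Lemma~\ref{vanishintegral}(3). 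Substituting $K_{1/2}(\alpha)=\sqrt{\pi/(2\alpha)}\,e^{-\alpha}$ collapses the product to $\sgn\N(x)$, which the orientation calculation just above Proposition~\ref{intersectionD} identifies with $(-1)^N\bigl<\D_\xbf^+,\D_0^+\bigr>_{\D^+}$. So the paper gets the identity by elementary special-function manipulation rather than by an abstract Thom-class argument, and no exhaustion or boundary analysis is needed.

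A minor organizational difference: the paper unfolds first into $\Gamma'_{h_i}$-orbits (matching the definition of $C_n(\varphi_\fin)$) and then into $\Ocal^{\times,+}$-suborbits, whereas you unfold directly by $\Gamma_\afrak$-orbits and regroup afterwards. These are equivalent rearrangements, and the $\kappa$ bookkeeping is the same as in Remark~\ref{rmkkappa}.
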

\begin{proof} It is enough to show that
\begin{align}
    \int_{C_\afrak} \Theta_n(v,\varphi_\fin)= (-1)^N\kappa \bigl  < C_n(\varphi_\fin),C_\afrak \bigr >_{M_K}.
\end{align}
For $\xbf$ a positive vector in $F^2$ we first show that
\begin{align} \label{stepinproof11}
    \int_{\D_0^+} \varphi^0(\xbf)= (-1)^N \bigl < \D_\xbf^+,\D_0^+ \bigr >_{\D^+}.
\end{align}
We have 
\begin{align} \label{expr1}
    \int_{\D_0^+} \varphi^0(\xbf) & = 2^{-N} e^{\pi Q(\xbf,\xbf)} J_\8(\xbf,0) \nonumber \\
    & = 2^{-N}\sqrt{\N(xx')}e^{\pi Q(\xbf,\xbf)}\prod_{\sigma }\left (\sgn(x_\sigma)+\sgn(x'_\sigma) \right )   K_{\frac{1}{2}}(2\pi \lvert x_\sigma x'_\sigma \rvert). \nonumber \\
    & \qquad
\end{align}
It follows from Lemma \ref{intersectionD} that the intersection $\D_0^+ \cap \D_\xbf^+$ is empty if and only if $\sgn(x_\sigma)\neq \sgn(x_\sigma)$ for some $\sigma$. Since $x_\sigma$ and $x_\sigma'$ are non-zero (otherwise $x$ would be zero) it follows that this is equivalent to the vanishing of \eqref{expr1}. Thus we can suppose that $\sgn(x_\sigma) = \sgn(x_\sigma)$ for all $\sigma$. Using the equality $K_{\frac{1}{2}}(\alpha)=\sqrt{\frac{2 \pi}{\alpha}}e^{-\alpha}$ we find that 
\begin{align}
    \int_{\D_0^+} \varphi^0(\xbf)=\sgn \N(x)= (-1)^N \bigl < \D_\xbf,\D_0^+ \bigr >_{\D^+}.
\end{align}
By the invariance property of the Kudla-Millson form we have
\begin{align} \int_{\D_\afrak^+}\varphi^0(\xbf) & = \int_{h_\afrak \D_0^+}\varphi^0(\xbf) \nonumber \\
    & = \int_{\D_0^+}\varphi^0(h_\afrak^{-1}\xbf) \nonumber \\
    & = (-1)^N\bigl < \D_{h_\afrak^{-1}\xbf}^+,\D_0^+ \bigr >_{\D^+} \nonumber \\
    & = (-1)^N\bigl < \D_{\xbf}^+,\D^+_\afrak \bigr >_{\D^+} \label{stepinproof2}.
\end{align}
Hence \eqref{stepinproof11} also holds for  $\D_\afrak^+$ instead of $\D_0^+$. Then we have
\begin{align}
\int_{C_\afrak} \Theta_n(v,\varphi_\fin) & = \sum_{i=1}^r \sum_{\substack{\xbf \in \Gamma_{h_i}' \backslash F^2_\Q \nonumber \\ Q(\xbf,\xbf)=2n}} \varphi_\fin(h_i^{-1}\xbf) \int_{C_\afrak} \sum_{\ybf \in \Gamma_{h_i}'\xbf}  \varphi^0(\sqrt{v}\ybf) \nonumber \\
& =  \sum_{i=1}^r \sum_{\substack{\xbf \in \Gamma_{h_i}' \backslash F^2_\Q \\ Q(\xbf,\xbf)=2n}} \varphi_\fin(h_i^{-1}\xbf) \sum_{\ybf \in \Ocal^{\times,+} \backslash \Gamma_{h_i}'\xbf } \int_{\D_\afrak^+}  \varphi^0(\sqrt{v}\ybf).
\end{align}
Since $\D_{\sqrt{v}\ybf}^+=\D_\ybf^+$ we then get
\begin{align}
\int_{C_\afrak} \Theta_n(v,\varphi_\fin) & = (-1)^N \sum_{i=1}^r \sum_{\substack{\xbf \in \Gamma_{h_i}' \backslash F^2_\Q \nonumber \\ Q(\xbf,\xbf)=2n}} \varphi_\fin(h_i^{-1}\xbf) \sum_{\ybf \in \Ocal^{\times,+} \backslash \Gamma_{h_i}'\xbf } \bigl < \D^+_\ybf,\D^+_\afrak \bigr >_{\D^+} \nonumber \\
& = (-1)^N\kappa \sum_{i=1}^r \sum_{\substack{\xbf \in \Gamma_{h_i}' \backslash F^2_\Q \\ Q(\xbf,\xbf)=2n}} \varphi_\fin(h_i^{-1}\xbf) \bigl < C_{\xbf}(h_i),C_\afrak \bigr >_{M_{h_i}}\nonumber  \\
& = (-1)^N\kappa \bigl  < C_n(\varphi_\fin),C_\afrak \bigr >_{M_K}.
\end{align}
\end{proof}

\noindent Hence with Proposition \ref{Ifourier} we get
\begin{align} \label{fourierofI} I(\tau, \varphi,\psi)=2^{1-N}\zeta_\fin(\varphi_1,\psi^{-1},0)+2^{1-N}\zeta_\fin(\varphi_2,\psi,0)+(-1)^N \kappa \sum_{n \in \Q_{>0}} \bigl < C_n(\varphi_\fin),C \otimes \psi \bigr >_{M_K} e^{2i\pi n \tau}.\end{align}

\paragraph{Change of model.} Recall that $W^0_F=X^0_F \oplus X^0_F$ is a $4$-dimensional symplectic space over $F$, whose restriction of scalars was $W_\Q=X_\Q \oplus X_\Q$. The sympectic form on $W_F^0$ is given by the skew symmetric-matrix
$
  \begin{pmatrix} 0 & A(Q^0) \\
  -A(Q^0) & 0
    \end{pmatrix} \in \Mat_4(F)
$
where $A(Q^0)=\begin{pmatrix}
 0 & 1 \\ 1 & 0
\end{pmatrix}$. Hence the symplectic group of $W^0_F$ is 
\begin{align}
\Sp(W^0_F) = \left \{ g \in \GL_{4}(F) \left \vert \transp{g} \begin{pmatrix}  & A(Q^0)\\ -A(Q^0) & \end{pmatrix}g=\begin{pmatrix}  & A(Q^0) \\ -A(Q^0) & \end{pmatrix} \right . \right \}.
\end{align}

Since the orthogonal group $\SO(F^2)$ is $F^\times$ we have two different models for the pair $ \SL_2(F) \times F^\times $ in $\Sp(W^0_F)$: on the one hand we can use the linear one for $\GL_2(F) \times F^\times$, on the other hand we can use the orthosymplectic model for $\SL_2(F) \times \SO(F^2)$; see Page \pageref{dualpairpage}. These models correspond to two different embeddings $\iota_{os}$ and $\iota_l$ of $\SL_2(F) \times F^\times$ in $\Sp(W^0_F)$ given by
\begin{align}
    \iota_{os}(g,t)& = \begin{pmatrix}at & & bt & \\ & at^{-1} & & bt^{-1} \\ ct & & dt & \\ & ct^{-1} & & dt^{-1} \end{pmatrix}, \\
    \iota_l(g,t) & = \begin{pmatrix}at & -bt & & \\ -bt & dt & & \\ & & at^{-1} & bt^{-1} \\ & & ct^{-1} & dt^{-1} \end{pmatrix},
\end{align}
where $g=\begin{psmallmatrix} a & b \\ c & d \end{psmallmatrix}$. With the latter embedding the linear pair $\SL_2(F) \times \GL_1(F)$ acts by
\begin{align}
    \omega_l(g,t)\varphi(\xbf)=\vert t \vert \varphi\left ( g^{-1}t\xbf \right ),
\end{align}
since 
\begin{align}
\adjast{\begin{pmatrix}
 a & -b \\ -c & d 
\end{pmatrix}}=\begin{pmatrix} 0 & 1 \\ 1 & 0 \end{pmatrix}\transp{\begin{pmatrix}
 a & -b \\ -c & d 
\end{pmatrix}}\begin{pmatrix} 0 & 1 \\ 1 & 0 \end{pmatrix}={\begin{pmatrix}
 a & b \\ c & d 
\end{pmatrix}}^{-1}.    
\end{align}
The two embeddings are conjugate to each other, we have $T\iota_{os}(g,t)T^{-1} = \iota_l(g,t)$ where \[T \coloneqq \begin{pmatrix}
 1 & 0 & 0 & 0 \\
 0 & 0 & -1 & 0 \\
 0 & 1 & 0 & 0 \\
 0 & 0 & 0 & 1
\end{pmatrix} \in \Sp(W^0_F).\]
We denote by $\Fcal$ the operator
\begin{align}
\Fcal \colon \Scal(\A_F^2) & \longrightarrow \Scal(\A_F^2) \nonumber \\
    \varphi & \longmapsto \omega(T)\varphi
\end{align}
Using the formula for the Weil representation in \eqref{weil rep} we get 
\begin{align}
\Fcal\varphi \begin{pmatrix}
    x \\ x'
    \end{pmatrix}=\int_{\A_F}\varphi \begin{pmatrix}
    z \\ x'
    \end{pmatrix}\chi(-xz)dz.
\end{align}
It is a partial Fourier transform and satisfies $   \Fcal \circ \omega_{os}(g,t)= \omega_{l}(g,t) \circ \Fcal
$ for every pair  $(g,t)$ in $\SL_2(\A_F) \times \A_F^\times$.

\begin{rmk} If $\varphi \begin{pmatrix} x \\ x' \end{pmatrix} = \varphi_1(x)\varphi_2(x')$ with $\varphi_1$ and $\varphi_2$ two Schwartz functions in $\Scal(\A_F)$, then $\Fcal \varphi\begin{pmatrix} x \\ x' \end{pmatrix} = {\varphi}^\vee_1(x) \varphi_2(x')$ where $\varphi^\vee$ is the Fourier transform on $\Scal(\A_F)$, see \eqref{Fourier1}.
\end{rmk}
For a Schwartz function $\phi$ in $\Scal(\A_F^2)$ and fixed $g$ in $\SL_2(\A_F)$, we define
\begin{align}
\widetilde{\Theta}_l(g,t,\phi) \coloneqq \sum_{\xbf \in F^2} \omega_l(g,t)\phi(\xbf) \in C^\8(F^\times \backslash \A_F^\times)^{K^0(\fffrak)}.
\end{align}
It is a function on $\A_F^\times$ in the variable $t$.
If $\Fcal\varphi=\phi$, then by Poisson summation we have
\begin{align} \label{poisson}
    \widetilde{\Theta}_l(g,t,\phi) =\widetilde{\Theta}_{os}(g,t,\varphi).
\end{align}

\begin{lem} \label{fouriercomput} Let $\phi_\8$ be the Schwartz function in $\Scal(F_\8^2)$ defined by $\phi_\8\coloneqq \Fcal\varphi_\8$. Then
\begin{align}
    \phi_\8 (\xbf)= (-i)^N\prod_{\sigma}e^{-\pi \lvert z_\sigma \rvert^2} z_\sigma
\end{align}
where $z_\sigma \coloneqq x_\sigma+ix'_\sigma$.
\end{lem}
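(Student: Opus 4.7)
The plan is a direct computation: since $\varphi_\8(\xbf) = \prod_\sigma \varphi_\sigma(\xbf_\sigma)$ is a pure tensor across archimedean places and the partial Fourier transform $\Fcal$ only integrates in the first variable, $\Fcal$ respects this tensor decomposition. Hence it suffices to compute $\phi_\sigma = \Fcal \varphi_\sigma$ at a single real place $\sigma$ and then take the product.

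At a fixed $\sigma$, using the additive character $\chi_\R(x) = e^{2i\pi x}$, one has
\[
\phi_\sigma(x_\sigma, x'_\sigma) = \int_\R e^{-\pi z^2}e^{-\pi {x'_\sigma}^2}(z + x'_\sigma)\, e^{-2i\pi x_\sigma z}\, dz,
\]
which splits into the two pieces
\[
\phi_\sigma(x_\sigma, x'_\sigma) = e^{-\pi {x'_\sigma}^2} \int_\R z\, e^{-\pi z^2}\, e^{-2i\pi x_\sigma z}\, dz \;+\; x'_\sigma\, e^{-\pi {x'_\sigma}^2} \int_\R e^{-\pi z^2}\, e^{-2i\pi x_\sigma z}\, dz.
\]
The second integral is the standard self-duality of the Gaussian, giving $e^{-\pi x_\sigma^2}$. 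For the first, I would use $z\, e^{-\pi z^2} = -\frac{1}{2\pi}\frac{d}{dz} e^{-\pi z^2}$ and integrate by parts, which yields $-i x_\sigma\, e^{-\pi x_\sigma^2}$.

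Combining these two evaluations gives
\[
\phi_\sigma(x_\sigma, x'_\sigma) = e^{-\pi(x_\sigma^2 + {x'_\sigma}^2)}\bigl(-i x_\sigma + x'_\sigma\bigr) = -i\, e^{-\pi \lvert z_\sigma\rvert^2}\, z_\sigma,
\]
since $-ix_\sigma + x'_\sigma = -i(x_\sigma + i x'_\sigma) = -iz_\sigma$ and $x_\sigma^2 + {x'_\sigma}^2 = \lvert z_\sigma \rvert^2$. Taking the product over the $N$ real embeddings produces the claimed factor $(-i)^N$ and proves the lemma. There is no real obstacle here; the only point requiring care is matching the sign conventions of the Fourier transform so that the factor $-i$ (rather than $+i$) appears per place.
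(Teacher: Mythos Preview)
Your proof is correct and essentially matches the paper's. The only cosmetic difference is that the paper evaluates the single-place integral by completing the square via the contour shift $u_\sigma = \alpha_\sigma + i x_\sigma$ (so the linear term drops out by oddness), whereas you handle the $z\,e^{-\pi z^2}$ piece by integration by parts; both are standard and yield the same $-i\,e^{-\pi|z_\sigma|^2}z_\sigma$ per place.
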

\begin{proof}We compute
\begin{align} 
\Fcal \varphi_\8 (\xbf) & = \prod_{\sigma} \omega(T) \varphi_{\sigma} \begin{pmatrix} x_\sigma \\ x'_\sigma \end{pmatrix} \nonumber \\
& =\prod_{\sigma} \int_\R \varphi_{\sigma} \begin{pmatrix} \alpha_\sigma \\ x'_\sigma \end{pmatrix} e^{-2i\pi\alpha_\sigma x_\sigma}d\alpha_\sigma \nonumber \\
& =\prod_{\sigma} \int_\R e^{-\pi({x'}_\sigma^2+\alpha_\sigma^2)}(x'_\sigma+\alpha_\sigma)e^{-2i\pi\alpha_\sigma x_\sigma}d\alpha_\sigma \nonumber \\
& \stackrel{u_\sigma=\alpha_\sigma+ix_\sigma}{=}\prod_{\sigma} e^{-\pi({x'}_\sigma^2+x_\sigma^2)}\int_\R e^{-\pi u_\sigma^2}(u_\sigma+x'_\sigma-ix_\sigma)du_\sigma \nonumber \\
& = (-i)^N\prod_{\sigma} e^{-\pi \lvert z_\sigma \rvert^2} z_\sigma.\end{align}
\end{proof}

\begin{thm} \label{fouriercoeffs} Suppose that $\varphi_1$ or $\varphi_2$ vanishes. The diagonal restriction of the Eisenstein series $E(\tau_1 \dots, \tau_N,\phi_\fin,\psi)$ has the Fourier expansion
\[E(\tau, \dots, \tau,\phi_\fin,\psi)=\zeta_\fin(\varphi_1,\psi^{-1},0)+\zeta_\fin(\varphi_2,\psi,0)+(-1)^N 2^{N-1}\kappa\sum_{n \in \Q_{>0}} \bigl < C_n(\varphi_\fin),C \otimes \psi \bigr >_{M_K} e^{2i\pi n \tau},\]
where $\varphi_\fin$ is such that $\phi_\fin=\Fcal\varphi_\fin$.
\end{thm}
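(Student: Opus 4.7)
The plan is to establish the seesaw identity
\begin{align*}
E(\tau, \ldots, \tau, \phi_\fin, \psi, s) = 2^{N-1}\, I(\tau, \varphi, \psi, s), \qquad \phi_\fin = \Fcal \varphi_\fin,
\end{align*}
and then apply the Fourier expansion \eqref{fourierofI} of $I(\tau, \varphi, \psi)$ obtained in Propositions \ref{Ifourier} and \ref{intertop}.

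To prove the seesaw identity, I first use the Poisson summation formula \eqref{poisson} to replace the orthosymplectic theta series by its linear counterpart, so that
\begin{align*}
2^{N-1} v^{N/2}\, I(\tau, \varphi, \psi, s) = \int_{F^\times \backslash \A_F^\times} \sum_{\xbf \in F^2} \omega_l(\iota_\Delta(g_\tau), t)\, \phi(\xbf)\, \psi(t)\, |t|^s\, dt^\times.
\end{align*}
The $\xbf = 0$ contribution is zero since $\phi_\8(0) = 0$ by Lemma \ref{fouriercomput}. For $\xbf \neq 0$, I exploit the equivariance $\omega_l(g, t)\phi(\alpha \xbf) = \omega_l(g, \alpha t)\phi(\xbf)$ for $\alpha \in F^\times$, together with $\psi(\alpha) = 1$ and $|\alpha|_\A = 1$, to group summands into $F^\times$-orbits and unfold via the standard identity $\int_{F^\times \backslash \A_F^\times} \sum_{\alpha \in F^\times} f(\alpha t)\, dt^\times = \int_{\A_F^\times} f(t)\, dt^\times$. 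This yields
\begin{align*}
2^{N-1} v^{N/2}\, I(\tau, \varphi, \psi, s) = \sum_{[\xbf] \in F^\times \backslash (F^2 \setminus \{0\})} Z(\iota_\Delta(g_\tau), \xbf, \phi, \psi, s).
\end{align*}
Parametrising the orbit space via $F^\times \backslash (F^2 \setminus \{0\}) \simeq P(F) \backslash \GL_2(F)$ through the orbit map of $\xbf_0$, and observing that the diagonal specialisation of $g_{\tauud}$ is precisely $\iota_\Delta(g_\tau)$, the right-hand side equals $v^{N/2}$ times the expression for the Eisenstein series in \eqref{unfold22}. This establishes the identity for $\re(s)$ large enough, and Proposition \ref{Ifourier} supplies an analytic continuation of $I(\tau, \varphi, \psi, s)$ to $s = 0$ under the hypothesis that $\varphi_1$ or $\varphi_2$ vanishes, so the identity persists there.

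Substituting \eqref{fourierofI} into $E(\tau, \ldots, \tau, \phi_\fin, \psi) = 2^{N-1}\, I(\tau, \varphi, \psi)$ produces the claimed expansion: the normalisations combine as $2^{N-1} \cdot 2^{1-N} = 1$ on the two zeta-integral constant terms, and as $2^{N-1}(-1)^N \kappa$ on the intersection-number Fourier coefficients. The most delicate point is the unfolding argument in the second display, specifically matching the sum over $F^\times$-orbits on $F^2 \setminus \{0\}$ with the parabolic coset representatives in \eqref{unfold22}, and confirming that the conventions for $\omega_l$ (governed by the conjugation operator $T$ relating the two models) are such that the resulting zeta integrals $Z$ line up termwise with those appearing in the definition of the Eisenstein series.
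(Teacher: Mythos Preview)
Your proposal is correct and follows essentially the same approach as the paper: apply Poisson summation to pass from the orthosymplectic to the linear model, drop the $\xbf=0$ term since $\phi_\8(0)=0$, unfold the integral over $F^\times\backslash\A_F^\times$ using the bijection $F^\times \times P(F)\backslash\GL_2(F)\to F^2\setminus\{0\}$ to recognise the Eisenstein series, and then invoke \eqref{fourierofI}. The paper carries this out in the same order with the same ingredients, only writing the unfolding step a bit more explicitly via the coset representatives $\gamma_0$.
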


\begin{proof}

First by Poisson summation (P.S.) we have
\begin{align} \label{unfold0}
2^{N-1} I(\tau_1 \dots, \tau_N,\varphi_\fin,\psi,s) & = (v_1 \cdots v_N)^{-\frac{1}{2}}\int_{F^\times \backslash \A_F^\times}\widetilde{\Theta}_{os}(g_{\tauud},t,\varphi)\psi(t)\lvert t \rvert^{s} dt^\times \nonumber \\
& \stackrel{P.S.}{=} (v_1 \cdots v_N)^{-\frac{1}{2}}\int_{F^\times \backslash \A_F^\times}\widetilde{\Theta}_{l}(g_{\tauud},t,\phi)\psi(t)\lvert t \rvert^{s} dt^\times \nonumber \\
& =  (v_1 \cdots v_N)^{-\frac{1}{2}} \int_{F^\times \backslash \A_F^\times} \sum_{\xbf \in F^2-(0,0)} \omega_l(g_{\tauud},t)\phi(\xbf) \psi(t) \lvert t \rvert^{s} dt^\times \nonumber.
\end{align}
Note that by our choice of Schwartz function $\phi_\8$ we have $\phi_\8(0,0)=0$, hence the term at $(0,0)$ does not contribute to the summation. We have a bijection
\begin{align}
F^\times \times P(F) \backslash \GL_2(F) & \longrightarrow F^2-{(0,0)} \nonumber  \\
(u,\gamma) & \longmapsto u \gamma_0^{-1} \xbf_0.
\end{align}
where $P(F)$ is the stabilizer of $\xbf_0 = \transp{(1,0)}$ and $\gamma_0$ is one of the following representatives in $P(F)$
\begin{align}
    \begin{pmatrix} 1 & 0 \\ 0 & 1 \end{pmatrix} \; \textrm{or} \; \begin{pmatrix}
     \lambda & 1 \\ 1 & 0
    \end{pmatrix} \; \textrm{with} \; \lambda \in F^\times.
\end{align}
Hence the sum 
\begin{align}
\int_{F^\times \backslash \A_F^\times} \sum_{\xbf \in F^2-(0,0)} \omega_l(g_{\tauud},t)\phi(\xbf) \psi(t) \lvert t \rvert^{s} dt^\times 
\end{align}
can be unfolded as
\begin{align} \label{unfold2}
 \int_{F^\times \backslash \A_F^\times} \sum_{u \in F^\times}  \sum_{\gamma \in P(F) \backslash \GL_2(F)}  \omega_l(g_{\tauud},t)\phi(u \gamma^{-1}_0\xbf_0) \psi(t) \lvert t \rvert^{s} dt^\times
 = \int_{\A_F^\times} \sum_{\gamma \in P(F) \backslash \GL_2(F)}  \omega_l(g_{\tauud},t)\phi(u \gamma^{-1}_0\xbf_0) \psi(t) \lvert t \rvert^{s} dt^\times.
\end{align}
Since we have termwise absolute convergence for $\re(s)>N-1$ we can exchange the sum and the integral:
\begin{align} \label{unfold}
\int_{\A_F^\times} \sum_{\gamma \in P(F) \backslash \GL_2(F)}  \omega_l(g_{\tauud},t)\phi(u \gamma^{-1}_0\xbf_0) \psi(t) \lvert t \rvert^{s} dt^\times  = \sum_{\gamma \in P(F) \backslash \GL_2(F) }  Z\left (g_{\tauud},\gamma_0^{-1}\xbf_0, \phi,\psi,s \right ).
\end{align}
Thus we get $2^{N-1} I(\tau_1,\dots,\tau_N,\varphi_\fin,\psi,s)=E(\tau_1,\dots,\tau_N,\phi_\fin,\psi,s)$, and the Fourier expansion follows from \ref{fourierofI}.
\end{proof}

\section{Classical formulation for quadratic fields} \label{sectionrealfield}

We want to specialize Theorem \ref{fouriercoeffs} to the case where $N=2$ and $F=\Q(\sqrt{D})$ a quadratic field with $D>0$ and squarefree. We have $ \Ocal=\Z \left [ \lambda \right ]$, where $\lambda \coloneqq  \frac{d_F+\sqrt{d_F}}{2}$. and $d_F$ is the fundamental discriminant. We explicit the choices that allow us to recover \cite[Theorem.~A]{DPV}.

\subsection{The symmetric space associated to $\SO(2,2)$}

We identify $(F_\Q^2,Q)$ with the quadratic space $\left ( \Mat_2(\Q),2\det \right )$ via
\begin{align} \label{isommat2}
    F_\Q^2 & \longrightarrow \Mat_2(\Q) \nonumber \\
    \xbf=\begin{pmatrix} x \\ x' \end{pmatrix} & \longmapsto [ x', SAx ],
\end{align}
where $S=\begin{psmallmatrix}   0 & -1 \\ 1 & 0 \end{psmallmatrix}$. The fact that this is an isometry follows from $\det[a,b]=\transp{a}S^{-1}b$. Let
\begin{align}
    \Htil(\Q) \coloneqq \GL_2(\Q) \times_{\Q^\times} \GL_2(\Q)  \nonumber =\left \{ (g_1,g_2) \in \GL_2(\Q) \times \GL_2(\Q) \, \mid \det(g_1)=\det(g_2) \right \}.
\end{align}
It acts on $\Mat_2(\Q)$ by $\htil X  \coloneqq g_1Xg_2^{-1}$, where $\htil=(g_1,g_2)$ is in $\Htil(\Q)$. With this identification we have an isomorphism between $\Htil$ and the spin group $\GSpin_X$ of the quadratic space $X_\Q=F^2_\Q$. There is an exact sequence
\begin{align}
    1 \xrightarrow{\quad \quad} \Q^\times  \xrightarrow{\quad \quad} \Htil(\Q) \xrightarrow{\quad \nu \quad} H(\Q) \xrightarrow{\quad  \quad} 1
\end{align}
where the kernel of $\nu$ is given by $\Q^\times = \left \{ (t \id_2,t\id_2) \in \GL_2(\Q)^2 \, \mid t \in \Q^\times \right \}.$

\begin{lem}The map $\nu \colon \Htil(\Q) \longrightarrow H(\Q)$ is given by
\begin{align}
     \nonumber \\
    \nu \left ( g_1, g_2 \right ) & =   \begin{pmatrix}
     a\adjhash{g_1} & \frac{bA^{-1}Sg_1}{\det(g_2)} \\[4.5ex] cS^{-1}A\adjhash{g}_1  &  \frac{dg_1}{\det(g_2)}
    \end{pmatrix},
\end{align}
 where $g_2=\begin{psmallmatrix} a & b \\ c & d  \end{psmallmatrix}$ and $\adjhash{g_1}=A^{-1}\transp{g}_1^{-1}A$.
 \end{lem}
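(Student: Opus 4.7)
The plan is to compute $\nu(g_1,g_2)\xbf$ directly by applying the definition of the action through the isometry \eqref{isommat2}, and then to read off the $2\times2$ block matrix entries.

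Write $\xbf=\begin{pmatrix} x \\ x' \end{pmatrix}$ and $\ybf=\nu(g_1,g_2)\xbf=\begin{pmatrix} y \\ y' \end{pmatrix}$. By definition of $\nu$, the matrices $[x',SAx]$ and $[y',SAy]$ are related by $[y',SAy]=g_1[x',SAx]g_2^{-1}$. With $g_2=\begin{psmallmatrix} a & b \\ c & d\end{psmallmatrix}$ we have $g_2^{-1}=\det(g_2)^{-1}\begin{psmallmatrix} d & -b \\ -c & a\end{psmallmatrix}$, and computing the product column by column yields
\begin{align}
y' &= \frac{1}{\det(g_2)}\bigl(dg_1 x'-cg_1 SAx\bigr), \\
SAy &= \frac{1}{\det(g_2)}\bigl(ag_1 SAx-bg_1 x'\bigr).
\end{align}

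The second identity gives $y=\det(g_2)^{-1}\bigl(aA^{-1}S^{-1}g_1SAx-bA^{-1}S^{-1}g_1 x'\bigr)$. To match the claimed formula, the key algebraic fact is the identity
\[
gS\transp{g}=\det(g)\,S \qquad \text{for all } g\in\GL_2,
\]
valid because $S$ is the standard symplectic form on $\Q^2$. Applied to $g_1$ this reads $g_1S=\det(g_1)\,S\transp{g_1}^{-1}$, hence, using the constraint $\det(g_1)=\det(g_2)$ built into $\Htil(\Q)$,
\[
\frac{1}{\det(g_2)}A^{-1}S^{-1}g_1 SA=A^{-1}\transp{g_1}^{-1}A=\adjhash{g_1}.
\]
The coefficient of $x'$ in $y$ is rewritten by $S^{-1}=-S$, giving $\frac{b}{\det(g_2)}A^{-1}Sg_1$. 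This recovers the first row of $\nu(g_1,g_2)$.

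For the first identity the same manipulation $g_1SA=\det(g_2)S\transp{g_1}^{-1}A=\det(g_2)SA\adjhash{g_1}$ turns $-c g_1 SA/\det(g_2)$ into $cS^{-1}A\adjhash{g_1}$ (using $S=-S^{-1}$), giving the second row. The only non-bookkeeping ingredient is the identity $gS\transp{g}=\det(g)S$ together with the defining constraint $\det(g_1)=\det(g_2)$ on $\Htil$; once these are in hand the calculation is mechanical and there is no real obstacle.
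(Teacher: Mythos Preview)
Your proof is correct and follows essentially the same approach as the paper: both compute the action of $(g_1,g_2)$ through the isometry \eqref{isommat2} and reduce everything to the identity $g_1S=\det(g_1)\,S\,\transp{g_1}^{-1}$ (which the paper writes as $-Sg_1S=\det(g_1)\transp{g_1}^{-1}$) together with $\det(g_1)=\det(g_2)$. The only cosmetic difference is that the paper treats the two block columns separately (setting $x=0$ then $x'=0$), whereas you carry both through at once.
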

\begin{proof} We will use several times the equality $-Sg_1S=\det(g_1)\transp{g}_1^{-1}$.
Note that the inverse of the map \eqref{isommat2} is given by
\[ [ u, u' ]  \longmapsto \begin{pmatrix}
-A^{-1}Su' \\ u
\end{pmatrix}. \]
We compute the action separately on the two coordinates $\xbf =(x,x')$ of $F^2_\Q$. First if $x=\transp{(0,0)}$:
\begin{align}
    (g_1,g_2) \cdot [x',0]=\frac{1}{\det(g_2)}g_1[x',0]\begin{pmatrix}
     d & -b \\ -c & a
    \end{pmatrix} = \frac{1}{\det(g_2)}[ dg_1x' , -bg_1x' ]
\end{align} which is mapped in $F^2_\Q$ to
\[\frac{1}{\det(g_2)} \begin{pmatrix} bA^{-1}Sg_1x' \\ dg_1x' \end{pmatrix}.\]
On the other hand if $x'=\transp{(0,0)}$ we have
\begin{align}
    (g_1,g_2)\cdot [0,SAx ]= \frac{1}{\det(g_2)}[0,g_1SAx ]\begin{pmatrix}
     d & -b \\ -c & a
    \end{pmatrix} = \frac{1}{\det(g_2)}[ -cg_1SAx , ag_1SAx ]
\end{align}
This is mapped to
\[\begin{pmatrix} a \adjhash{g}_1x \\ -cSA \adjhash{g}_1x \end{pmatrix} \in F^2_\Q.\]
\end{proof}

\noindent The group of real points $\Htil(\R)$ has two connected components and
\begin{align}
    \Htil(\R)^+ =\left \{ (g_1,g_2) \in \GL_2(\R) \times \GL_2(\R) \, \mid \det(g_1)=\det(g_2)>0 \right \},
\end{align}
is the connected component of the identity. Its image is $\nu\left (\Htil(\R)^+ \right )=H(\R)^+$. 
We have a transitive action of $\Htil(\R)^+$ on the space $\D^+$ of negative lines in $\Mat_2(\R)$ that sends $z$ to $\htil z=g_1zg_2^{-1},$
where $\htil=(g_1,g_2)$. Consider the basis of $\Mat_2(\Q)$
\begin{align}
\Ebf_1=\begin{pmatrix} 1 & 0 \\ 0 & 1 \end{pmatrix} \quad   \Ebf_2=\begin{pmatrix} 0 & 1 \\ -1 & 0 \end{pmatrix} \quad 
\Fbf_1=\begin{pmatrix} 1 & 0 \\ 0 & -1 \end{pmatrix} \quad   \Fbf_2=\begin{pmatrix} 0 & 1 \\ 1 & 0 \end{pmatrix},
\end{align}
and the negative plane
\begin{align}
X_0 \coloneqq  \Span\bigl < \Fbf_1,\Fbf_2 \bigr > = \left \{  \begin{pmatrix} a & b \\ b & -a\end{pmatrix}, a,b \in \R \right \},
\end{align}
oriented by $\Fbf_1 \wedge \Fbf_2$. Its stabilizer in $\Htil(\R)^+$ is $\R_{>0}(\SO(2)\times \SO(2))$. Hence the stabilizer of 
$\alpha X_0 \beta^{-1}$ for $(\alpha,\beta)$ in $\Htil(\R)^+$ is $\R_{>0}\Ktil_\8(\alpha X_0\beta^{-1})$ where
\begin{align} \label{maxcompact}
\Ktil_\8(\alpha X_0 \beta^{-1})=(\alpha,\beta) \SO(2) \times \SO(2)(\alpha,\beta)^{-1}.
\end{align}
Note that under the isomorphism \eqref{isommat2} the negative plane $z_0=\{ (v,-v),v \in \R^2\}$ is mapped to $g_\8^{-1}X_0$. On the other hand the group $\Htil(\R)^+$ acts on $\HH \times \HH$ by mapping $(\tau_1,\tau_2)$ to $(g_1\tau,g_2\tau_2)$. The stabilizer of $(\alpha i, \beta i)$ is $\R_{>0}\Ktil_\8(\alpha X_0 \beta^{-1})$. Hence we have isomorphisms
\begin{align} \label{identDH}
    \D^+   \longrightarrow \Htil(\R)^+/\R_{>0}\Ktil_\8(\alpha X_0 \beta^{-1}) & \longrightarrow \HH \times \HH \nonumber \\
    g_1(\alpha X_0 \beta^{-1})g_2^{-1}  \longmapsto (g_1,g_2)\R_{>0}\Ktil_\8(\alpha X_0 \beta^{-1}) & \longmapsto (g_1\alpha i,g_2\beta i).
\end{align}
In the other direction we can express the map more concretely by
\begin{align} \label{identHD}
    \Psi \colon \HH \times \HH & \longrightarrow \D^+ \nonumber \\
    (\tau_1, \tau_2) & \longmapsto X(\tau_1,\tau_2) = \Span \bigl < \Fbf_1(\tau_1,\tau_2), \Fbf_2(\tau_1,\tau_2) \bigr >
\end{align}
where 
\begin{align}
\Fbf_1(\tau_1,\tau_2)& = \sqrt{y_1y_2} g_{\tau_1} \Fbf_1 g_{\tau_2}^{-1} = \begin{pmatrix}
 y_1 & -x_2y_1-x_1y_2 \\ 0 & -y_2
\end{pmatrix} \nonumber \\
\Fbf_2(\tau_1,\tau_2)& = \sqrt{y_1y_2} g_{\tau_1} \Fbf_2 g_{\tau_2}^{-1} = \begin{pmatrix}
 x_1 & -x_1x_2+y_1y_2 \\ -1 & x_2
\end{pmatrix}
\end{align}and $g_\tau=\begin{pmatrix}
 \sqrt{y} & \nicefrac{x}{\sqrt{y}} \\ 0 & \nicefrac{1}{\sqrt{y}}
\end{pmatrix}$ maps $i$ to $\tau=x+iy$. 

\subsection{The adelic isomorphism}
 
 Let $p$ be some fixed prime (later we will also assume that $p$ is an odd split prime). Let $\Ktil \coloneqq \Ktil_\8(z_0) \Ktil_0(p)$, where $\Ktil_0(p) \coloneqq K_0(p) \times_{\det} K_0(p)$ is an open compact in $\Htil(\widehat{\Z})$ and
\begin{align}
K_0(p)=\left \{ \begin{pmatrix}
 a & b \\ c & d
\end{pmatrix} \in \GL_2(\widehat{\Z}), p \mid c \right \}. 
\end{align}
By strong approximation for $\SL_2$ we know that 
\begin{align}
    \SL_2(\A_\fin)=\SL_2(\Q) K_0(p).
\end{align}Using the fact that the determinant $\det \colon K_0(p)_p \longrightarrow \Z_p^\times
$ is surjective for every $p$, one can also show that we have $\GL_2(\A_{\fin})=\GL_2(\Q)^+K_0(p)$. Hence the space 
\begin{align}
    M_{\Ktil}=\Htil(\Q) \backslash \Htil(\A)/\A^{\times} \Ktil
\end{align}
is connected. The map $\nu$ induces an isomorphism between $M_{\Ktil}$ and $M_K$. Since $\Htil(\Q)^+ \cap \Ktil_\fin=\Gamma_0(p) \times \Gamma_0(p)$, where $\Gamma_0(p) = \left \{ \begin{pmatrix}
 a & b \\ c & d
\end{pmatrix} \in \SL_2(\Z), \; p \mid c \right \}$, we have
\begin{align}
M_{\Ktil}=Y_0(p)\times Y_0(p).
\end{align}
where $Y_0(p) = \Gamma_0(p) \backslash \HH$.
\begin{rmk}
Note that since $ (-\id_2,\id_2)$ is in $\Htil(\Q)^+ \cap \Ktil_0(p)$ we have $\kappa=2$ in this setting.
\end{rmk}

\subsection{Hecke correspondences}

\begin{prop} \label{cyclesN2} Let $\xbf$ be in $\GL_2(\Q)^+$. After identifying $\D^+$ with $\HH \times \HH$, the submanifold $\D_\xbf^+$ is the image of the embedding
\begin{align}
    \HH & \hooklongrightarrow \HH \times \HH \nonumber \\
    \tau & \longmapsto (\xbf \tau,\tau).
\end{align}
\end{prop}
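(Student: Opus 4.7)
The plan is to use the explicit parametrization $\Psi \colon \HH \times \HH \to \D^+$ from \eqref{identHD} to translate the condition $z \subset \xbf^\perp$ into a condition on stabilizers for the Möbius action on $\HH$. Concretely, $(\tau_1,\tau_2)$ corresponds to a point of $\D_\xbf^+$ precisely when $B(\Fbf_i(\tau_1,\tau_2),\xbf) = 0$ for $i=1,2$, where $B$ is the bilinear form associated to $Q = 2\det$ on $\Mat_2(\R)$, since $\D_\xbf^+ = \{z \in \D^+ : z \subset \xbf^\perp\}$ and $X(\tau_1,\tau_2) = \Span\langle \Fbf_1(\tau_1,\tau_2),\Fbf_2(\tau_1,\tau_2)\rangle$.

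The next step would be to use the $\Htil(\R)^+$-invariance of $B$: since $\nu(\Htil(\R)^+) \subset H(\R)^+$ preserves $Q$, it preserves $B$, so from $\Fbf_i(\tau_1,\tau_2) = \sqrt{y_1y_2}\, g_{\tau_1}\Fbf_i g_{\tau_2}^{-1}$ the orthogonality becomes $B(\Fbf_i, g_{\tau_1}^{-1}\xbf g_{\tau_2}) = 0$ for $i=1,2$, i.e.\ $g_{\tau_1}^{-1}\xbf g_{\tau_2} \in X_0^\perp$. A short direct calculation then identifies $X_0^\perp = \Span\langle \Ebf_1,\Ebf_2\rangle$, which is exactly the set of matrices of the form $\begin{pmatrix} a & b \\ -b & a \end{pmatrix}$ with $a,b\in \R$.

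Finally, I would recognize that the nonzero matrices of this last form are precisely the elements of $\GL_2(\R)^+$ fixing $i \in \HH$ under the Möbius action (they have determinant $a^2+b^2 > 0$, and a quick verification shows $\frac{ai+b}{-bi+a} = i$). Since $\det\xbf > 0$, the product $g_{\tau_1}^{-1}\xbf g_{\tau_2}$ is in $\GL_2(\R)^+$, so the condition $g_{\tau_1}^{-1}\xbf g_{\tau_2} \in X_0^\perp$ is equivalent to $g_{\tau_1}^{-1}\xbf g_{\tau_2}\cdot i = i$, i.e.\ $\xbf\tau_2 = \tau_1$.

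There is no real obstacle here; the only points requiring care are the routine computation that $X_0^\perp = \Span\langle \Ebf_1, \Ebf_2\rangle$ (three bilinear pairings to check) and the observation that the form $\begin{pmatrix}a & b\\ -b & a\end{pmatrix}$ with $a^2+b^2 > 0$ coincides with the Möbius-stabilizer of $i$. Note the statement does not assert a matching of orientations, only an equality of submanifolds, so no orientation computation is needed.
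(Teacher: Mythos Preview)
Your proof is correct and takes a genuinely different route from the paper. The paper argues group-theoretically: it identifies the stabilizer $\Htil_\xbf(\R)^+=\{(\xbf g\xbf^{-1},g):g\in\GL_2(\R)^+\}$, picks the basepoint $\xbf X_0\in\D_\xbf^+$, computes $\Ktil_\xbf(\xbf X_0)\simeq\R_{>0}\SO(2)$, and then reads off the embedding $\tau\mapsto(\xbf\tau,\tau)$ from the identification \eqref{identDH} of $\D^+$ with $\HH\times\HH$ via coset spaces. You instead work pointwise through the explicit parametrization $\Psi$: using the $\Htil(\R)^+$-invariance of the bilinear form you reduce $X(\tau_1,\tau_2)\subset\xbf^\perp$ to $g_{\tau_1}^{-1}\xbf g_{\tau_2}\in X_0^\perp$, identify $X_0^\perp$ with the matrices $\begin{psmallmatrix}a&b\\-b&a\end{psmallmatrix}$, and recognize these as the Möbius stabilizer of $i$, yielding $\xbf\tau_2=\tau_1$. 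Your approach is more elementary and self-contained, bypassing the quotient-space bookkeeping; the paper's approach is more structural and dovetails with how the special cycles $C_\xbf$ are set up elsewhere via stabilizer subgroups. Both are short, and your observation that no orientation check is required here is correct.
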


\begin{proof} We have $g_1 \xbf g_2^{-1}=\xbf$ if and only if $g_1=\xbf g_2 \xbf^{-1}$, which means that the stabilizer of $\xbf$ is
    \[\Htil_\xbf(\R)^+= \left \{  \left. (\xbf g \xbf^{-1},g) \in \Htil(\R)^+ \right \vert g \in \GL_2(\R)^+ \right \} \simeq \GL_2(\R)^+.\]
We fix the basepoint $\xbf X_0$ in $\D^+$ so that the stabilizer is
   \begin{align}
     \Ktil_\8(\xbf X_0 ) \coloneqq \R_{>0}(  \xbf \SO(2) \xbf^{-1}  \times \SO(2) ).  
   \end{align}
Moreover, the intersection $\Ktil_\xbf(\xbf X_0 ) \coloneqq \Htil_\xbf(\R)^+ \cap \Ktil_\8(\xbf X_0 ) \simeq \R_{>0} \SO(2)$ and we have an isomorphism
    \begin{align}
    \GL_2(\R)^+/\R_{>0}\SO(2)&  \longrightarrow \Htil_\xbf(\R)^+/\Ktil_\xbf( \xbf X_0) \nonumber \\
    g\R_{>0}\SO(2) & \longmapsto (\xbf g \xbf^{-1},g)\Ktil_\xbf(\xbf X_0).
    \end{align}
    Finally, composing with the identification \eqref{identDH} we obtain that $\D_\xbf^+$ is the image of
    \begin{align}
    \HH  \longrightarrow \Htil_\xbf(\R)^+/\Ktil_\xbf(\xbf X_0) & \hooklongrightarrow \HH \times \HH \nonumber \\
    \tau  \longmapsto (\xbf g_\tau \xbf^{-1},g_\tau) \Ktil_\xbf(\xbf X_0)& \longmapsto ( \xbf \tau, \tau).
    \end{align}
  \end{proof}
\noindent Let $\Gamma = \Gamma_0(p)$ for some fixed prime $p$ and define the set
\begin{align}
    \Delta_0(p) \coloneqq \left \{ \left . \begin{pmatrix}
    a & b \\ c & d
    \end{pmatrix} \in \Mat_2(\Z)  \right \vert p \mid c, \, (a,p)=1, \, ad-bc>0\right \},
\end{align}
and $\Delta_0(p)^{(n)}$ the subset of matrices in $\Delta_0(p)$ of determinant $n$. Let 
\begin{align}
(\Gamma \times \Gamma)_{\xbf}= \Htil_\xbf(\R)^+ \cap (\Gamma \times \Gamma) = \left \{  \left. (\xbf \gamma \xbf^{-1},\gamma) \in \Gamma \times \Gamma \right \vert \gamma \in \Gamma_\xbf \right \}
\end{align}
where $\Gamma_\xbf \coloneqq \Gamma \cap \xbf^{-1}\Gamma \xbf$. For $\xbf$ in $\GL_2(\Q)^+$, the special cycle $C_\xbf$ is the correspondence
\begin{align} \label{heckemap}
    \Gamma_\xbf \backslash \HH \hooklongrightarrow (\Gamma \times \Gamma)_\xbf \backslash \HH^2 \longrightarrow Y_0(p) \times Y_0(p)
\end{align}
where the first map is sending $\tau$ to $(\xbf \tau,  \tau)$ and the second is the projection
\begin{align}
  (\Gamma \times \Gamma)_\xbf \backslash \HH^2\longrightarrow (\Gamma \times \Gamma ) \backslash \HH^2 = Y_0(p) \times Y_0(p).  
\end{align}
Define the open compact in $\Mat_2(\widehat{\Z})$
 \begin{align}
     \widehat{\Delta}_0(p) \coloneqq \left \{ \left . \begin{pmatrix}
    a & b \\ c & d
    \end{pmatrix} \in \Mat_2(\widehat{\Z})  \right \vert a_p \in \Z_p^\times, \, c_p \in p\Z_p \right \},
     \end{align}
that satisfies $\Mat_2(\Q)^+ \cap \widehat{\Delta}_0(p)=\Delta_0(p)$. If we take as Schwartz function $\varphi_\fin = \id_{\widehat{\Delta}_0(p)}$ in $\Scal(\Mat_2(\A_F))$, then for positive $n$ we have
\begin{align}
    C_n(\id_{\widehat{\Delta}_0(p)}) = \sum_{\xbf \in \Gamma \backslash \Delta_0(p)^{(n)}/\Gamma} C_\xbf.
\end{align}

\subsection{Orientations}
In this section we transfer the orientations defined in Subsection \ref{sectionlss} on $\D^+$ to $\HH \times \HH$. In order to do so we define a basis of $\Mat_2(\Q)$:
\begin{align}
\Fbf_1(\tau_1,\tau_2)& = \sqrt{y_1y_2} g_{\tau_1} \Fbf_1 g_{\tau_2}^{-1} =  \begin{pmatrix}
 y_1 & -x_2y_1-x_1y_2 \\ 0 & -y_2
\end{pmatrix} \nonumber \\
\Fbf_2(\tau_1,\tau_2)& = \sqrt{y_1y_2} g_{\tau_1} \Fbf_2 g_{\tau_2}^{-1} =  \begin{pmatrix}
 x_1 & -x_1x_2+y_1y_2 \\ 1 & -x_2
\end{pmatrix} \nonumber \\
\Ebf_1(\tau_1,\tau_2)& = \sqrt{y_1y_2} g_{\tau_1} \Ebf_1 g_{\tau_2}^{-1} =  \begin{pmatrix}
 y_1 & -x_2y_1+x_1y_2 \\ 0 & y_2
\end{pmatrix} \nonumber \\
\Ebf_2(\tau_1,\tau_2)& =\sqrt{y_1y_2} g_{\tau_1} \Ebf_2 g_{\tau_2}^{-1} = \begin{pmatrix}
 x_1 & -x_1x_2-y_1y_2 \\ 1 & -x_2.
\end{pmatrix}
\end{align}
If the basepoint $(\tau_1,\tau_2)$ is clear we will write $\Ebf_1,\Ebf_2,\Fbf_1$ and $\Fbf_2$. As in Subsection \ref{sectionlss} we identify 
\begin{align}T_{X(\tau_1,\tau_2)}\D^+ = \Hom(X(\tau_1,\tau_2),X(\tau_1,\tau_2)^\perp) \simeq X(\tau_1,\tau_2)^\vee \otimes X(\tau_1,\tau_2)^\perp
\end{align}
and orient $\D^+$ by
\begin{align}
    (\Fbf_1^\vee \otimes \Ebf_1) \wedge (\Fbf_2^\vee \otimes \Ebf_1) \wedge (\Fbf_1^\vee \otimes \Ebf_2) \wedge (\Fbf_2^\vee \otimes \Ebf_2).
\end{align}
Let $\Psi \colon \HH \times \HH \longrightarrow \D^+$ be the isomorphism \eqref{identHD} and consider the differential
\begin{align}
    d\Psi \colon T_{(\tau_1,\tau_2)} \HH \times \HH \longrightarrow T_{X(\tau_1,\tau_2)}\D^+.
\end{align}
Let $\tau_1=x_1+iy_1$ and $\tau_2=x_2+iy_2$ be the coordinates on $\HH \times \HH$.

\begin{prop} \label{orientHH}
Under the identification of $\D^+$ with $\HH \times \HH$ the orientation of $\HH \times \HH$ is
\[4y_1^2y_2^2\frac{\partial}{\partial x_1} \wedge \frac{\partial}{\partial y_1} \wedge \frac{\partial}{\partial x_2} \wedge \frac{\partial}{\partial y_2} \]
and the orientation of $\D^+_\id$ in $\D^+$ at the point $X(\tau,\tau)$ is given by 
\[-y^4\left ( \frac{\partial }{\partial x_1}+\frac{\partial }{\partial x_2} \right ) \wedge \left ( \frac{\partial }{\partial y_2}+\frac{\partial }{\partial y_1} \right ). \]
\end{prop}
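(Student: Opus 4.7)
The plan is to compute $d\Psi$ at the basepoint $(i,i)$ in the ordered basis $\{\Fbf_i^\vee \otimes \Ebf_j\}_{i,j=1,2}$ of $T_{X_0}\D^+$, and then transport to arbitrary $(\tau_1,\tau_2)$ using equivariance of $\Psi$ under $\Htil(\R)^+$. Using the identification $T_{X(\tau_1,\tau_2)}\D^+ \simeq X(\tau_1,\tau_2)^\vee \otimes X(\tau_1,\tau_2)^\perp$, a tangent vector coming from a variation $(\tau_1(t),\tau_2(t))$ is the linear map sending $\Fbf_i$ to the $X^\perp$-projection of $\frac{d}{dt}\Fbf_i(\tau_1(t),\tau_2(t))|_{t=0}$. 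At the basepoint $(i,i)$, the orthogonal projection onto $X_0^\perp = \Span\langle \Ebf_1,\Ebf_2\rangle$ with respect to $2\det$ reads $M \mapsto \tfrac{\Tr(M)}{2}\Ebf_1 + \tfrac{M_{12}-M_{21}}{2}\Ebf_2$.

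Differentiating the explicit formulas for $\Fbf_1(\tau_1,\tau_2)$ and $\Fbf_2(\tau_1,\tau_2)$ in each coordinate direction and projecting will yield the four vectors $d\Psi(\partial_{x_k})|_{(i,i)}$ and $d\Psi(\partial_{y_k})|_{(i,i)}$ as explicit half-integer linear combinations of the basis elements $\Fbf_i^\vee \otimes \Ebf_j$. Computing the resulting $4\times 4$ determinant gives $\tfrac{1}{4}$, so $d\Psi\bigl(\tfrac{\partial}{\partial x_1}\wedge \tfrac{\partial}{\partial y_1}\wedge \tfrac{\partial}{\partial x_2}\wedge \tfrac{\partial}{\partial y_2}\bigr)|_{(i,i)} = \tfrac{1}{4}\, o(T_{X_0}\D^+)$.

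To pass to a general point, I invoke equivariance $\Psi\circ (g_{\tau_1},g_{\tau_2}) = (g_{\tau_1},g_{\tau_2})\circ \Psi$. The Möbius action on $\HH$ satisfies $(g_{\tau_k})_\ast \partial_x|_i = y_k \partial_{x_k}|_{\tau_k}$ and likewise for $\partial_y$, so the pulled-back $4$-vector at $(\tau_1,\tau_2)$ picks up an overall factor of $y_1^2 y_2^2$; meanwhile, the induced action on $T_{X_0}\D^+$ sends $\Fbf_i^\vee\otimes \Ebf_j$ to $\Fbf_i(\tau_1,\tau_2)^\vee \otimes \Ebf_j(\tau_1,\tau_2)$ and hence preserves the orientation element. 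This yields the first formula.

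For $\D^+_\id$, Proposition~\ref{cyclesN2} identifies it with the diagonal in $\HH^2$. Since $\Ebf_1(\tau,\tau) = y\cdot\id$, we have $X(\tau,\tau)^\perp \cap \id^\perp = \Span\langle \Ebf_2(\tau,\tau)\rangle$, so $T_{X(\tau,\tau)}\D^+_\id$ is spanned by $\Fbf_i(\tau,\tau)^\vee \otimes \Ebf_2(\tau,\tau)$ with normal bundle $X(\tau,\tau)^\vee \otimes \id$. Applying the rule $o(T)\wedge o(N) = o(T\D^+)$ from Section~\ref{poincaredual} — in which the $\sqrt{y_1y_2}$ normalization built into the basis at $X(\tau,\tau)$ produces a factor $y^2$ — will pin down $o(T_{X(\tau,\tau)}\D^+_\id)$. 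Combining this with the formulas $d\Psi(\partial_{x_1}+\partial_{x_2})|_{(i,i)} = -\Fbf_1^\vee\otimes \Ebf_2$ and $d\Psi(\partial_{y_1}+\partial_{y_2})|_{(i,i)} = \Fbf_2^\vee\otimes \Ebf_2$ (which fall out of the second step) and applying the same equivariance argument will give the second formula; the overall sign $-1$ then comes from the wedge $(-\Fbf_1^\vee\otimes \Ebf_2)\wedge (\Fbf_2^\vee\otimes \Ebf_2)$. The main bookkeeping difficulty will be matching the normal-bundle orientation convention with the wedge in the basis adapted to $X(\tau,\tau)$, while tracking carefully the $\sqrt{y_1y_2}$-rescaling in the definitions of $\Fbf_i(\tau_1,\tau_2)$ and $\Ebf_j(\tau_1,\tau_2)$.
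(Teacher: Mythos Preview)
Your approach is correct and arrives at the same result as the paper. The paper itself defers to \cite{branchereau_2022} for the proof; the argument sketched in the source computes $d\Psi(\partial_{x_k})$ and $d\Psi(\partial_{y_k})$ directly at a general point $(\tau_1,\tau_2)$ by differentiating $\Fbf_1(\tau_1,\tau_2)$, $\Fbf_2(\tau_1,\tau_2)$ and expressing the result in the moving frame $\Fbf_i(\tau_1,\tau_2)^\vee\otimes\Ebf_j(\tau_1,\tau_2)$, obtaining for instance $d\Psi(\partial_{x_1})=\tfrac{1}{2y_1}(\Fbf_2^\vee\otimes\Ebf_1+\Fbf_1^\vee\otimes\Ebf_2)$, and then wedging. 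Your route---computing once at $(i,i)$ and transporting by $(g_{\tau_1},g_{\tau_2})$---is equivalent but cleaner: the key observation that the $\sqrt{y_1y_2}$-scaling in $\Fbf_i(\tau_1,\tau_2)$ and $\Ebf_j(\tau_1,\tau_2)$ cancels in the tensor $\Fbf_i^\vee\otimes\Ebf_j$ (since $(cv)^\vee=c^{-1}v^\vee$) is exactly what makes the equivariance argument go through without extra bookkeeping, and it separates the $y_1^2y_2^2$ factor coming from the M\"obius pushforward on $\HH^2$ from the $y^2$ factor coming from $\id=y^{-1}\Ebf_1(\tau,\tau)$ in the normal bundle. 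One small correction: with the paper's conventions the individual signs you wrote are swapped, namely $d\Psi(\partial_{x_1}+\partial_{x_2})|_{(i,i)}=+\Fbf_1^\vee\otimes\Ebf_2$ and $d\Psi(\partial_{y_1}+\partial_{y_2})|_{(i,i)}=-\Fbf_2^\vee\otimes\Ebf_2$, but this does not affect the wedge $(\Fbf_1^\vee\otimes\Ebf_2)\wedge(-\Fbf_2^\vee\otimes\Ebf_2)=(-\Fbf_1^\vee\otimes\Ebf_2)\wedge(\Fbf_2^\vee\otimes\Ebf_2)$, so your conclusion stands.
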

\noindent For a proof see \cite[Proposition 4.4.3]{branchereau_2022}

\subsection{A choice of basis for $F$} \label{sectionbaseF}
Now suppose that $p$ is an odd split prime in $F$. Then $d_F$ is a quadratic residue modulo $p$ and we can find an integer $r$ such that $r^2 \equiv d_F \pmod{p}$ and $r^2-d_F$ is positive. Since the discriminant $d_F$ is always congruent to $0$ or $1$ modulo $4$, it is always a square mod $4$ and we can suppose furthermore that $r^2 \equiv d_F \pmod{4p}$. We fix such a root $r$ and set
\begin{align}
 \epsilon_{r} \coloneqq \frac{\sqrt{d_F}-r}{2} \in \Ocal.
\end{align} We take the positive $\Z$-basis $\left \{ \epsilon_{r},1 \right \}$ of $\Ocal$. Let $N_0 \coloneqq 2N(\epsilon_r)$, so that $2N_0=r^2-d_F$. By changing the root we obtain another basis $\left \{ \epsilon_{-r},1 \right \}$. Note that $N_0$ is positive.

\begin{rmk} In the previous sections of this chapter, various objects we used implicitely depended on the choice of the $\Z$-basis of $\Ocal$, for example the matrix $g_\8$ in $\GL_N(\R)$, the embedding $h$ of $F_v^\times$ in $H(\Q_v)$ or the cycle $C \otimes \psi$. From now we will use the two $\Z$-bases $\left \{ \epsilon_{\pm r},1 \right \}$ of $\Ocal$ and decorate the symbols by $ r$, for example we will write $g_{\8, r},h_r$ or $C_{r} \otimes \psi$. 
\end{rmk}

The regular representation with respect to the basis $\{ \epsilon_r,1\}$ is given by
\begin{align} \label{embM0p}
    \gamma_{r} \colon (F\otimes \Q_v)^\times & \hooklongrightarrow \GL_2(\Q_v) \nonumber \\
    a\epsilon_{r}+b & \longmapsto\begin{pmatrix}
     b-ar & a \\ -\frac{aN_0}{2} & b
    \end{pmatrix}.
\end{align}
At all the other split places $q$ we can find $r_{q,r}$ in $\Z$ such that $r_{q,r}^2 \equiv d_F \pmod{q}$ and $r_{q,r}^2-d_F$ is positive. Moreover by changing the sign of $r$ if necessary we can also suppose that $\frac{r_{q,r}}{r}$ is positive. We set $r_{q,-r}=-r_{q,r}$. As above, for $q$ odd we can suppose furthermore that $r_{q,r}^2 \equiv d_F \pmod{4q}$. We define
\begin{align}
\epsilon_{q,r} \coloneqq \frac{\sqrt{d_F}-r_{q,r}}{2},
\end{align}
and since $r_{q,r} \equiv d_F \pmod{2}$ we have $\Ocal = \Z[\epsilon_{q,r}]$. Modulo $q$ the minimal polynomial of $\epsilon_{q,r}$ splits as $x(x+r_{q,r})$. We have the splitting $(q)=\q \q^\sigma$, where
\begin{align} \label{qideal}
    \q \coloneqq \left (\frac{\sqrt{d_F}-r_{q,r}}{2} \right )\Z+q\Z, \qquad    \q^\sigma \coloneqq \left (\frac{\sqrt{d_F}+r_{q,r}}{2} \right )\Z+q\Z;
\end{align}
see  \eqref{factorisation}.
At every split prime $q$ we have two square roots of $d_F$ in $\Z_q^\times$. We take $\beta_{q,r}$ in $\Z^\times_q$ to be the one that satisfies $\beta_{q,r} \equiv r_{q,r} \pmod{q\Z_q}$. Let $\varsigma_{v,r}$ be as in \eqref{varsigmaiso}. If $v$ is non-split then $\varsigma_{v,r}$ is the identity on $F_{\Q_v}$, if $v=q$ is split then 
\begin{align} \label{map1}
\varsigma_{q,r}  \colon F_{\Q_q} & \longrightarrow F_q=\Q_q \times \Q_q \nonumber \\
\sqrt{d_F} & \longmapsto (\beta_{q,r},-\beta_{q,r}) .
\end{align}
It maps $\q \otimes \Z_q$ to $(q\Z_q,\Z_q^\times)$ and $\q^\sigma \otimes \Z_q$ to $(\Z^\times_q,q\Z_q)$. 
Then, we precompose the map \eqref{embM0p} by the isomorphism $\varsigma_{v,r}^{-1} \colon F_v \longmapsto F_{\Q_v}$, which is the identity at the non split places. This yields an embedding
\begin{align}
\gamma_r \colon F_v^\times \hooklongrightarrow \GL_2(\Q_v) 
\end{align}
that we also denote by $\gamma_r$.
\begin{prop} \label{explicitemb} At split primes $q$ the embedding $\gamma_r$ of $F^\times_q$ in $\GL_2(\Q_q)$ is defined by
\[\gamma_r(t_q,t_{q}^\sigma)=\begin{pmatrix}
        \frac{t_q+t_{q}^\sigma}{2}-r\frac{t_q-t_{q}^\sigma}{2\beta_{q,r}} &  \frac{t_q-t_{q}^\sigma}{\beta_{q,r}} \\[2.5ex]
        -\frac{(t_q-t_{q}^\sigma)N_0}{2\beta_{q,r}} & \frac{t_q+t_{q}^\sigma}{2}+r\frac{t_q-t_{q}^\sigma}{2\beta_{q,r}}
    \end{pmatrix}.\]
\end{prop}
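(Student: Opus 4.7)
The plan is a direct computation: express an arbitrary element $(t_q,t_q^\sigma) \in F_q = \Q_q \times \Q_q$ in the basis $\{\epsilon_r, 1\}$ via the inverse of $\varsigma_{q,r}$, then plug into the formula \eqref{embM0p} for $\gamma_r$.

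First I would compute $\varsigma_{q,r}(\epsilon_r)$ and $\varsigma_{q,r}(1)$. Since $\varsigma_{q,r}(\sqrt{d_F}) = (\beta_{q,r}, -\beta_{q,r})$ and $\epsilon_r = \frac{\sqrt{d_F}-r}{2}$, one gets
\[
\varsigma_{q,r}(\epsilon_r) = \left(\tfrac{\beta_{q,r}-r}{2},\; \tfrac{-\beta_{q,r}-r}{2}\right), \qquad \varsigma_{q,r}(1) = (1,1).
\]
Writing $\varsigma_{q,r}^{-1}(t_q,t_q^\sigma) = a\epsilon_r + b$, the two coordinate equations
\[
a\,\tfrac{\beta_{q,r}-r}{2} + b = t_q, \qquad a\,\tfrac{-\beta_{q,r}-r}{2} + b = t_q^\sigma
\]
solve (by subtracting and adding) to
\[
a = \frac{t_q - t_q^\sigma}{\beta_{q,r}}, \qquad b = \frac{t_q + t_q^\sigma}{2} + \frac{r(t_q - t_q^\sigma)}{2\beta_{q,r}}.
\]

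Substituting these values of $a$ and $b$ into the matrix $\begin{psmallmatrix} b-ar & a \\ -aN_0/2 & b\end{psmallmatrix}$ given by \eqref{embM0p} produces exactly the four entries claimed in the statement: the $(1,2)$ and $(2,1)$ entries are immediate, while the diagonal entries $b \mp ar$ give $\tfrac{t_q+t_q^\sigma}{2} \pm \tfrac{r(t_q-t_q^\sigma)}{2\beta_{q,r}}$ after simplification. There is no real obstacle here; the only thing to be careful about is the sign conventions, namely that $\varsigma_{q,r}$ sends $\sqrt{d_F}$ to $(\beta_{q,r},-\beta_{q,r})$ (and not the reverse) and that $\epsilon_r = \frac{\sqrt{d_F}-r}{2}$ rather than $\frac{\sqrt{d_F}+r}{2}$, since these sign choices determine the placement of $\beta_{q,r}$ in the denominators and of $r$ in the numerators.
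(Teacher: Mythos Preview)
Your proposal is correct and follows essentially the same approach as the paper: compute the preimage $a\epsilon_r + b = \varsigma_{q,r}^{-1}(t_q,t_q^\sigma)$ by solving the linear system, obtaining the same values of $a$ and $b$, and then substitute into the matrix formula \eqref{embM0p}. The paper's proof is slightly terser but the argument is identical.
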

\begin{proof}
By definition $\varsigma_{q,r}$ maps $a\epsilon_r + b$ to $(a\epsilon_r+b,a\epsilon^\sigma_r+b)$, where $\epsilon_r=\frac{\beta_{q,r}-r}{2}$ and $\epsilon^\sigma_r=-\frac{\beta_{q,r}+r}{2}$ are in $\Q_q$. The preimage of $(t_\q,t_{\q^\sigma})$ is $a\epsilon_r+b$ in $F_{\Q_q}$ with
\begin{align}
a =\frac{t_q-t_{q}^\sigma}{\beta_{q,r}}, \qquad b= \frac{t_q+t_{q}^\sigma}{2}+r\frac{t_q-t_{q}^\sigma}{2\beta_{q,r}}
\end{align}
and the result follows by applying the map \eqref{embM0p}.
\end{proof}

\noindent For the basis $\{ \epsilon_{r},1\}$ we denote by $h_{r}$ the embedding of $F_v^\times$ in $H(\Q_v)$ defined in Subsection \ref{embH0}. Define the map
\begin{align}
\htil_r \colon F_v^\times & \hooklongrightarrow \Htil(\Q_v) \nonumber \\
t & \longmapsto \htil_r(t) = \begin{pmatrix} \gamma_{r}(t), \begin{pmatrix}  1  & 0 \\ 0 & \det(\gamma_{r}(t))\end{pmatrix} \end{pmatrix},
\end{align}
which is a lift of the map $h_{ r}$  {\em i.e.} $\nu \left (\htil_r(t) \right ) =h_{r}(t).$ At infinity this can be diagonalized as
\begin{align} \label{diagemb}
 \begin{pmatrix} \gamma_{r}(t_\8), \begin{pmatrix}  1 & 0 \\ 0 & \det(\gamma_{r}(t_\8)) \end{pmatrix} \end{pmatrix} = (g_{\8, r},1)^{-1} \begin{pmatrix} g(t_\8), \begin{pmatrix}  1  & 0 \\ 0 & \det(g(t_\8)) \end{pmatrix} \end{pmatrix} (g_{\8, r},1),
\end{align}
where \begin{align}
g_{\8,r} =\begin{pmatrix}
     \epsilon_r & 1 \\  \epsilon^{\sigma}_r & 1
    \end{pmatrix} \in \GL_2(\R).
\end{align}

\subsection{Geodesics on the modular curve and Hecke operators}

Suppose we have an embedding 
\begin{align} \label{orientation}
    \Phi \colon \R_{>0}  & \longrightarrow \HH \nonumber \\
    t & \longmapsto \Phi_x(t)+i\Phi_y(t).
\end{align}
whose image is the geodesic joining two points $\alpha$ and $\beta$ in $\R \cup \{\8 \}$. We orient the image $\Phi( \R_{>0})$  by 
\begin{align}
    d\Phi\left ( \frac{\partial}{\partial t}\right )=\frac{\partial \Phi_x}{\partial t} \frac{\partial}{\partial x}+\frac{\partial \Phi_y}{\partial t} \frac{\partial}{\partial y}.
\end{align}
We say that the geodesic is oriented from $\alpha$ to $\beta$ if
\begin{align} \label{orientcond}
\sgn \frac{\partial \Phi_x}{\partial t} = \sgn (\beta-\alpha)
\end{align}
and write $\Qcal(\alpha,\beta)$ for the (oriented) image in $\HH$. Note that the condition \eqref{orientcond} is equivalent to $\Phi(0)=\alpha$ and $\Phi(\8)=\beta$. If $\alpha$ is an RM-point we write $\Qcal(\alpha)=\Qcal(\alpha,\alpha^\sigma)$. Let $\Qcalb(\alpha,\beta)$ be the image of $\Qcal(\alpha,\beta)$ in $Y_0(p)$. If $\alpha$ is an RM-point, then $\Qcalb(\alpha)$ is a closed (compact) geodesic. Let $-\Qcal(\alpha)\coloneqq \Qcal(\alpha^\sigma,\alpha)$. 

We define Hecke operators on geodesics as follows. Let $\Qcal=\Qcal(\alpha)$ for some RM-point $\alpha$ and
\begin{align}
\Gamma[\Qcal] \coloneqq \left \{ \left . \gamma \in \Gamma \right \vert \gamma \Qcal = \Qcal \right \}
\end{align}
be its stabilizer. It is isomorphic to $\{ \pm 1 \} \times \gamma_{\Qcal}^{\Z}$, for some $\gamma_{\Qcal}$ in $\Gamma$. In particular, if $P\Gamma$ denotes the image of $\Gamma$ in $\PSL_2(\Z)$ then $P\Gamma[\Qcal]=\gamma_{\Qcal}^{\Z}$. Let $ R_n$ be a finite set of representatives in $\Delta_0(p)^{(n)}$ of the double quotient $\Gamma \backslash \Delta_0(p)^{(n)}/\Gamma$. Hence we can write
\begin{align} \label{doublecoset}
    \Delta_0(p)^{(n)} & = \bigsqcup_{\delta \in R_n} \Gamma \delta \Gamma.
\end{align}
Let $R_n(\Qcal)$ be a subset of $R_n$ that are representatives for the double quotient $\Gamma[Q] \backslash \Delta_0(p)^{(n)}/\Gamma$. Then we have
\begin{align} \label{doublecoset2}
    \Delta_0(p)^{(n)} & = \bigsqcup_{\delta \in R_n}  \Gamma \delta \Gamma = \bigsqcup_{\delta \in R_n(\Qcal)} \Gamma[\Qcal] \delta \Gamma.
\end{align}
We define the Hecke operator by
\begin{align}
    T_n\Qcalb \coloneqq \sum_{\delta \in R_n(\Qcal)} \overline{\delta^{-1} \Qcal}.
\end{align}

%%%%%%%%%%%%%%%%%%%%%%%
\begin{comment}

\begin{rmk}
Consider the set 
\begin{align}
    \Delta^\ast_0(p) \coloneqq \left \{ \left . \begin{pmatrix}
    a & b \\ c & d
    \end{pmatrix} \in \Mat_2(\Z)  \right \vert p \mid c, \, (a,p)=1, \, ad-bc>0\right \}.
\end{align} We have a bijection
    \begin{align}
        \Delta_0(p)^{(n)} & \longrightarrow \Delta^\ast_0(p)^{(n)} \\
        \xbf & \longrightarrow n\xbf^{-1}.
    \end{align}
If $R^\ast_n(\Qcal)$ is the image of $R_n(\Qcal)$ then $\Delta^\ast_0(p)^{(n)}=\bigsqcup_{\delta \in R^\ast_n(\Qcal)} \Gamma \delta \Gamma[\Qcal]$ and \begin{align}
T_n\Qcalb = \sum_{\delta \in R^\ast_n(\Qcal)} \overline{\delta \Qcal}.  
\end{align}
\end{rmk}
\end{comment}
%%%%%%%%%%%%%%%%%%%%%%%

\subsection{The twisted class $C \otimes \psi$ in $\HH \times \HH$}

Define the points 
\begin{align}
    \alpha_r & \coloneqq g_{\8,r}^{-1}(0)= -\frac{1}{\epsilon_r}=\frac{r+\sqrt{d_F}}{N_0 }, \qquad \alpha_{-r} \coloneqq g_{\8,-r}^{-1}(0) = \frac{-r+\sqrt{d_F}}{N_0 }=-\alpha_r^\sigma.
\end{align}
By strong approximation, for every fractional ideal $\afrak$ we can find an element $g_{\afrak,r}$ in $\GL_2(\Q)^+$ such that $\gamma_r(t_{\afrak})$ lies in $g_{\afrak,r}^{-1}K_0(p)$,
where we suppose that $g_{\Ocal,r}=1$. We define the RM-points
\begin{align} \label{taudef}
    \alpha_{\afrak,r} \coloneqq g_{\afrak,r}\alpha_r, \qquad    \alpha_{\afrak,-r} \coloneqq g_{\afrak,-r}\alpha_{-r}.
\end{align} 
In particular $\alpha_{\Ocal,r}=\alpha_r$.

\begin{rmk}\label{rmkequiv} Suppose we replace $g_{\afrak,r}$ by another $\gtil_{\afrak,r}$ in $\GL_2(\Q)^+$ that satisfies that $\gamma_r(t_{\afrak})$ lies in $\gtil_{\afrak,r}^{-1}K_0(p)$, and let $\widetilde{\alpha}_{\afrak,r}=\gtil_{\afrak,r} \alpha_r$. Then $\gtil_{\afrak,r} g_{\afrak,r}^{-1}$ is in $\GL_2(\Q)^+ \cap K_0(p)=\Gamma
$ and hence $\widetilde{\alpha}_{\afrak,r}$ is in $\Gamma \alpha_{\afrak,r}$.
\end{rmk}
%We suppose that $h^+=2h$, since $h^+=h$ is equivalent to the existence of a unit of negative norm, which implies that the Hecke character is not totally odd, see \eqref{fact1} and \eqref{classnumb}. Hence we have $\Cl(F)^+=\Cl(F) \sqcup \cfrak \Cl(F)$ where $\cfrak =(\sqrt{d_F})$.

\begin{lem} \label{rules} The orbit $\Gamma\alpha_{\afrak,r}$ only depends on the narrow class $[\afrak]$ in $\Cl(F)^+$. In particular $\Qcalb(\alpha_{\afrak,r})$ only depends on the narrow class of $\afrak$.
\end{lem}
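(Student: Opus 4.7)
The crux is that each matrix $\gamma_r(\mu)$, for $\mu \in F^\times$, acts as a Möbius transformation on $\PP^1(\R)$ with fixed points exactly $\alpha_r$ and $\alpha_r^\sigma$. I would first establish this via the diagonalization \eqref{diagemb}: since $\gamma_r(\mu) = g_{\8, r}^{-1}\diag(\mu_{\sigma_1}, \mu_{\sigma_2}) g_{\8, r}$ and the diagonal matrix fixes $0$ and $\8$, the matrix $\gamma_r(\mu)$ fixes $g_{\8, r}^{-1}(0) = \alpha_r$ and $g_{\8, r}^{-1}(\8) = \alpha_r^\sigma$.

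Next I would verify the compatibility $\gamma_r(\widehat{\Ocal}^\times) \subseteq K_0(p)$. At every finite place $q$ one has $\gamma_r(\Ocal_q^\times) \subseteq \GL_2(\Z_q)$ because $\gamma_r$ is simply the regular representation in the integral basis $\{\epsilon_r, 1\}$. The nontrivial point is that the lower-left entry of $\gamma_r$ be divisible by $p$: from \eqref{embM0p} this entry equals $-aN_0/2$ for $\gamma_r(a\epsilon_r + b)$, and the choice $r^2 \equiv d_F \pmod{4p}$ (made in Section \ref{sectionbaseF}) forces $N_0/2 = (r^2 - d_F)/4 \equiv 0 \pmod p$.

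With these ingredients in hand, given $\afrak'$ narrowly equivalent to $\afrak$, write $\afrak' = (\mu)\afrak$ with $\mu \in F^{\times,+}$ totally positive, so that $t_{\afrak'} = \mu\, t_\afrak\, u$ in $\A_{F,\fin}^\times$ for some $u \in \widehat{\Ocal}^\times$. Then
\[\gamma_r(t_{\afrak'}) = \gamma_r(\mu)\gamma_r(t_\afrak)\gamma_r(u) \in \gamma_r(\mu)\, g_{\afrak, r}^{-1}\, K_0(p),\]
and I may take $g_{\afrak', r} = g_{\afrak, r}\gamma_r(\mu)^{-1}$; this lies in $\GL_2(\Q)^+$ since $\det\gamma_r(\mu) = \N_{F/\Q}(\mu) > 0$. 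Using that $\gamma_r(\mu)$ fixes $\alpha_r$,
\[\alpha_{\afrak', r} = g_{\afrak, r}\gamma_r(\mu)^{-1}\alpha_r = g_{\afrak, r}\alpha_r = \alpha_{\afrak, r},\]
and Remark \ref{rmkequiv} upgrades this to $\Gamma\alpha_{\afrak', r} = \Gamma\alpha_{\afrak, r}$ for any other valid choice.

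Finally, for the statement about $\Qcalb(\alpha_{\afrak, r})$, the same argument applied to the other endpoint yields $\alpha_{\afrak', r}^\sigma = \alpha_{\afrak, r}^\sigma$: indeed $g_{\afrak, r} \in \GL_2(\Q)$ commutes with Galois, so $\alpha_{\afrak, r}^\sigma = g_{\afrak, r}\alpha_r^\sigma$, and $\gamma_r(\mu)$ also fixes $\alpha_r^\sigma$. Any other admissible choice of $g_{\afrak', r}$ translates the oriented geodesic $\Qcal(\alpha_{\afrak, r}, \alpha_{\afrak, r}^\sigma)$ by a common element of $\Gamma$, leaving its image in $Y_0(p)$ unchanged. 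The only real (mild) obstacle in the plan is the congruence calculation $p \mid N_0/2$; once that is in hand the proof is formal.
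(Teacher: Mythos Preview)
Your argument is correct and follows essentially the same route as the paper's proof: both use the diagonalization $\gamma_r(\mu)=g_{\8,r}^{-1}\diag(\mu,\mu^\sigma)g_{\8,r}$ to see that $\gamma_r(\mu)$ fixes $\alpha_r$, then take $g_{\afrak',r}=g_{\afrak,r}\gamma_r(\mu)^{-1}$ and invoke Remark~\ref{rmkequiv}. The only difference is cosmetic: the paper simply chooses the representative $t_{\bfrak}=\lambda t_\afrak$ and thereby sidesteps the unit issue, whereas you allow an arbitrary $u\in\widehat{\Ocal}^\times$ and check $\gamma_r(\widehat{\Ocal}^\times)\subseteq K_0(p)$ directly (the paper establishes this separately in Lemma~\ref{compactemb}).
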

\begin{proof} First note that if $\lambda$ is in $F^\times$, then $\gamma_r(\lambda) \alpha_r=\alpha_r$ since $\gamma_r(\lambda)=g_{\8,r}^{-1} \begin{pmatrix} \lambda & 0 \\ 0 & \lambda^{\sigma}
    \end{pmatrix} g_{\8,r}$. Now suppose that we replace $\afrak$ by $\bfrak=(\lambda) \afrak$ and $\lambda$ is totally positive. Then we have $\gamma_r(t_{\bfrak})$ is in $\gamma_r(\lambda)g_{\afrak,r}^{-1}K_0(p)$. Hence, by the previous remark, for $g_{\bfrak,r}=g_{\afrak,r} \gamma_r(\lambda)^{-1}$ we get $\alpha_{\bfrak,r}$ is in $\Gamma g_{\afrak,r} \gamma_r(\lambda)^{-1} \alpha_r= \Gamma \alpha_{\afrak,r}$.
\end{proof}

Suppose that $\psi$ is unramified, so $\fffrak=\Ocal$. Let $K^0(\Ocal)= K_\8^0 \times \widehat{\Ocal}^\times$ where
 \begin{align}
 K_\8^0= \left \{ \left . (t_1,t_2) \in F^\times_\8 \; \right \vert \; t_i =\pm 1, \,  t_1t_2=1 \right \}.
 \end{align}
 We have
\begin{align}
    M_{\Ocal}= F^\times \backslash \A_F^\times/K^0(\Ocal) = \bigsqcup_{\afrak \in \Cl(F)^+} \widehat{\Ocal}^{\times,+} \backslash \R_{>0}^2,
\end{align}
where $\Cl(F)^+$ is the narrow class group.
 \begin{lem} \label{compactemb} We have $\htil_r(\widehat{\Ocal}^\times)$ is contained in $\Ktil_0(p)$ and $\htil_r(K^0_\8)=\htil_r((\R^\times)^N) \cap \Ktil_\8(z_0)$.
 \end{lem}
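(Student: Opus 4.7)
The plan is to handle the two claims separately; both reduce to elementary computations once one untangles the definitions. For the inclusion $\htil_r(\widehat{\Ocal}^\times)\subset\Ktil_0(p)$, I would first observe that $\htil_r(t)=(\gamma_r(t),\,\diag(1,\N(t)))$ and that the two components automatically share the same determinant $\N(t)$, so the fibered-product condition defining $\Ktil_0(p)$ is satisfied as soon as each component lies in $K_0(p)$. The second component is upper triangular with diagonal entries $1$ and $\N(t)\in\widehat{\Z}^\times$, so it is trivially in $K_0(p)$. The problem thus reduces to showing $\gamma_r(t)\in K_0(p)$ for $t\in\widehat{\Ocal}^\times$, which I would split into (a) integrality $\gamma_r(t_v)\in\GL_2(\Z_v)$ at every finite $v$, and (b) the congruence condition that the lower-left entry of $\gamma_r(t_p)$ lies in $p\Z_p$.

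Step (a) is immediate from the fact that $\{\epsilon_r,1\}$ is a $\Z$-basis of $\Ocal$: the regular representation sends $\Ocal_v$ into $\Mat_2(\Z_v)$ and hence $\Ocal_v^\times$ into $\GL_2(\Z_v)$ at every finite place. Step (b) is the only genuinely arithmetic input and is where the setup of Subsection \ref{sectionbaseF} is used. By Proposition \ref{explicitemb}, the lower-left entry of $\gamma_r(t_p,t_p^\sigma)$ equals $-(t_p-t_p^\sigma)\,N_0/(2\beta_{p,r})$. Since $p$ is odd and $\beta_{p,r}\in\Z_p^\times$, the factor $2\beta_{p,r}$ is a $p$-adic unit, so it suffices to verify $N_0/2=(r^2-d_F)/4\in p\Z_p$; this is exactly the content of the choice $r^2\equiv d_F\pmod{4p}$ made in Subsection \ref{sectionbaseF}. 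I expect this congruence check to be the main (and only non-formal) step of the lemma.

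For the second claim, I would exploit the conjugation formula \eqref{diagemb}, which expresses
\[
\htil_r(t_\8)=(g_{\8,r}^{-1},1)\,\bigl(g(t_\8),\,\diag(1,\N(t_\8))\bigr)\,(g_{\8,r},1),
\]
together with the description $\Ktil_\8(z_0)=(g_{\8,r}^{-1},1)\,(\SO(2)\times\SO(2))\,(g_{\8,r},1)$. Thus $\htil_r(t_\8)\in\Ktil_\8(z_0)$ is equivalent to $\bigl(g(t_\8),\diag(1,\N(t_\8))\bigr)\in\SO(2)\times\SO(2)$. Both components are diagonal matrices, and the only diagonal elements of $\SO(2)$ are $\pm\id$, which forces $t_1=t_2\in\{\pm1\}$; the condition $\N(t_\8)=t_1t_2=1$ is then automatic. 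This characterizes the preimage as $K^0_\8$, and since $\htil_r$ is injective on $F_\8^\times$, the equality $\htil_r(K^0_\8)=\htil_r((\R^\times)^N)\cap\Ktil_\8(z_0)$ follows. No real obstacle arises here beyond unpacking the definitions.
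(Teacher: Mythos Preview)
Your proposal is correct and follows essentially the same approach as the paper. Your treatment of integrality is in fact slightly cleaner: you invoke the $\Z$-basis argument uniformly at all finite places, whereas the paper splits into cases (odd split primes via the explicit formula of Proposition~\ref{explicitemb}, the split prime $2$ separately, and non-split primes via the $\Z$-basis); the congruence check at $p$ and the second claim are handled identically in both.
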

\begin{proof}  Since $\N(\epsilon_r)$ is in $p\Z_p$ and $2\beta_q$ in $\Z_q^\times$ for all split odd primes $q$, it follows that $ \gamma_r(\Ocal_q^\times)$ is included in $K_0(p)_q$, where $\gamma_r$ is the embedding in Proposition \ref{explicitemb}. Suppose that $2$ is split in $F$, and let $t_2$ and $t_2^\sigma$ be in $\Z_2^\times$. Then $t_2\equiv t_2^\sigma \equiv 1 \pmod{2\Z_2}$, hence
\begin{align}
\left \vert \frac{t_2 + t_2^\sigma}{2} \right \vert_2 \leq 1, \qquad \left \vert \frac{t_2 - t_2^\sigma}{2} \right \vert_2 \leq 1,
\end{align}
and $ \gamma_r(\Ocal_q^\times)$ is also contained in $K_0(p)_q$ for $q=2$. Then, since $\det(\gamma_r(t_q,t_{q}^\sigma))=t_q t_{q}^\sigma$ is in $\Z_q^\times$ for $t_q$ and $t_{q}^\sigma$ in $\Z_q^\times$, it follows that $ \htil_r(\Ocal_q^\times)$ is contained in $\Ktil_0(p)_q
$. On the other hand at the non split primes $q$ we have that $\Ocal_q$ is isomorphic to $\Ocal \otimes \Z_q$. The fact that $\gamma_r(\Ocal_q^\times) $ is contained in $\GL_2(\Z_q)$ follows from the fact that $\{\epsilon_r,1\}$ is a $\Z$-basis of $\Ocal$.
 Finally, since $z_0=g_{\8,r}^{-1} X_0$ we have
\begin{align}
\Ktil_\8(z_0)=(g_{\8,r}^{-1},1) \SO(2) \times \SO(2)(g_{\8,r},1),
\end{align}
and the second statement follows by \eqref{diagemb}.
  \end{proof}  
\noindent It follows from the lemma that the embedding $\htil_r$ induces a map
\begin{align}
    \htil_r \colon M_\Ocal \longrightarrow M_{\Ktil}
\end{align}
whose image in $M_{\Ktil}$ corresponds to the image of $h_{r}$ in $M_K\simeq M_{\Ktil}$. Let $C_{\afrak,r}$ be the image of a connected component of $\Ocal^{\times,+} \backslash \R_{>0}^2$ by $h_r$, that we considered in Section \ref{sectionclass}. Similarly, we define
\begin{align}
    C_{r} \otimes \psi = \sum_{[\afrak] \in \Cl(F)^+} \psi(\afrak) C_{\afrak,r} \in \Zcal_2(\overline{M_K},\partial \overline{M_K};\R).
\end{align}

Let $\alpha_1,\alpha_2,\beta_1$ and $\beta_2$ be real numbers. There is a natural product orientation on $\Qcal(\alpha_1,\beta_1) \times \Qcal(\alpha_2,\beta_2)$. If $\Phi_1$ and $\Phi_2$ are diffeomorphisms as in \eqref{orientation}, then we orient the product by
\begin{align}
    d(\Phi_1 \times \Phi_2) \left (t_1\frac{\partial}{\partial t_1} \wedge t_2\frac{\partial}{\partial t_2} \right )& = t_1t_2\left (\frac{\partial \Phi_{1,x}}{\partial t_1} \frac{\partial}{\partial x_1}+\frac{\partial \Phi_{1,y}}{\partial t_1} \frac{\partial}{\partial y_1} \right ) \wedge \left (\frac{\partial \Phi_{2,x}}{\partial t_2} \frac{\partial}{\partial x_2}+\frac{\partial \Phi_{2,y}}{\partial t_2} \frac{\partial}{\partial y_2} \right ).
\end{align}

\begin{prop} \label{geodesic} After identifying $M_K$ with $Y_0(p) \times Y_0(p)$, the cycle $C_{\afrak,r}$ corresponds to $\Qcalb(\alpha_{\afrak, r}) \times \Qcalb(\8,0)$.
\end{prop}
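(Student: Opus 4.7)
The plan is to trace through the chain of identifications $M_{\Ocal} \hookrightarrow M_K \simeq M_{\Ktil} \simeq Y_0(p) \times Y_0(p)$ combined with $\D^+ \simeq \HH \times \HH$ via $\Psi$, and compute the image of a parametrization of the connected component indexed by $\afrak$ explicitly.

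First I would locate the basepoint. From the discussion just before \eqref{identHD}, under the identification \eqref{isommat2} the plane $z_0$ corresponds to $g_{\8,r}^{-1} X_0 \in \D^+$, and under \eqref{identDH} this is the point $(g_{\8,r}^{-1}\cdot i,\; i) \in \HH \times \HH$. By the unwinding procedure recalled around \eqref{adeless} and using Lemma \ref{compactemb}, the cycle $C_{\afrak,r}$ is the image in $Y_0(p) \times Y_0(p)$ of $\{\,h_\afrak\cdot\htil_r(t_\8)\cdot z_0 : t_\8 \in F_\8^{\times,+}\,\}$, where we may take the lift $h_\afrak = \bigl(g_{\afrak,r},\; \mathrm{diag}(1,\det g_{\afrak,r})\bigr) \in \Htil(\Q)^+$, which satisfies $h_\afrak\cdot\htil_r(t_\afrak) \in \Ktil_0(p)$ thanks to the defining property of $g_{\afrak,r}$.

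Next I would use the diagonalization \eqref{diagemb} to compute the action. Writing $t_\8=(t_1,t_2)$, we have $\gamma_r(t_\8)\cdot g_{\8,r}^{-1}\cdot i = g_{\8,r}^{-1}\cdot g(t_\8)\cdot i = g_{\8,r}^{-1}\!\bigl(\tfrac{t_1}{t_2}\,i\bigr)$, while $\mathrm{diag}(1,\det\gamma_r(t_\8))\cdot i = i/(t_1 t_2)$. Applying $h_\afrak$ via the $\Htil(\R)^+$-action on $\HH\times\HH$ yields the point
\[
\Bigl(\, g_{\afrak,r}g_{\8,r}^{-1}\!\bigl(\tfrac{t_1}{t_2}\,i\bigr),\;\; \bigl(t_1 t_2 \det g_{\afrak,r}\bigr)^{-1} i \,\Bigr).
\]
As $(t_1,t_2)$ varies over $\R_{>0}^2$, the ratios $u\coloneqq t_1/t_2$ and $v\coloneqq t_1 t_2$ vary independently over $\R_{>0}$. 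The second coordinate then sweeps out the full positive imaginary axis, and as $v \to 0^+$ resp.\ $+\infty$ the point goes to $\infty$ resp.\ $0$, giving $\Qcal(\infty,0)$. For the first coordinate, since $g_{\8,r}^{-1}$ sends $0 \mapsto \alpha_r$ and $\infty \mapsto \alpha_r^\sigma$, the set $\{ui: u>0\}=\Qcal(0,\infty)$ is sent by $g_{\afrak,r}g_{\8,r}^{-1}$ to the geodesic joining $g_{\afrak,r}\alpha_r = \alpha_{\afrak,r}$ (by \eqref{taudef}) and $g_{\afrak,r}\alpha_r^\sigma = \alpha_{\afrak,r}^\sigma$ (since $g_{\afrak,r}\in\GL_2(\Q)$ commutes with Galois), i.e.\ $\Qcal(\alpha_{\afrak,r})$.

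Finally I would verify orientation and the passage to the quotient. The orientation check reduces to the identity $d\log(t_1/t_2)\wedge d\log(t_1 t_2) = 2\, dt_1^\times \wedge dt_2^\times$ combined with positivity of the Möbius derivatives of $g_{\afrak,r}g_{\8,r}^{-1}$ along the imaginary axis and of $v \mapsto i/v$. For the quotient, the action of $\Ocal^{\times,+}$ via the Minkowski embedding on $(t_1,t_2)$ acts trivially on $v$ (since $N(\lambda)=1$ for totally positive units) and by $\lambda^2$ on $u$, matching the stabilizer $g_{\afrak,r}\gamma_r(\Ocal^{\times,+})g_{\afrak,r}^{-1}$ of $\Qcal(\alpha_{\afrak,r})$; one checks that this subgroup lies in $\Gamma_0(p)$ using $g_{\afrak,r}\gamma_r(t_\afrak)\in K_0(p)$ at every finite prime. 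The main obstacle is precisely this last verification, that the arithmetic stabilizers on the two sides match exactly so that the image in $Y_0(p)\times Y_0(p)$ is the full product $\Qcalb(\alpha_{\afrak,r})\times\Qcalb(\infty,0)$ with the correct orientation, rather than a cover or a fraction of it.
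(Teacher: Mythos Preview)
Your approach is correct and essentially the same as the paper's: both compute the image of $\htil_r(t_\8)$ acting on the basepoint via the diagonalization \eqref{diagemb}, perform the change of variables $(t_1,t_2)\mapsto(t_1/t_2,\,t_1t_2)$, and identify the two factors as the claimed geodesics by their endpoints. The only organizational differences are that the paper first treats the case $\afrak=\Ocal$ and applies $\htil_{\afrak,r}$ at the end, and it verifies the orientation by explicitly computing the signs of $\partial\Phi_{1,x}/\partial u_1$ and $\partial\Phi_{2,y}/\partial u_2$ rather than appealing to ``positivity of M\"obius derivatives'' (note your phrasing is slightly off, since $v\mapsto i/v$ has \emph{negative} $y$-derivative, which is precisely what gives the direction $\infty\to 0$); the stabilizer-matching concern you flag at the end is not addressed explicitly in the paper's proof either.
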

\begin{proof}  Let $\Ocal^{\times,+} t_\8$ be in the connected component of $M_\Ocal$ corresponding to $\afrak$. It corresponds to the point $F^\times(t_\8,t_{\afrak})K^0(\Ocal)$ in $M_\Ocal$, which is mapped to $\Htil(\Q)(\htil_r(t_\8),\htil_r(t_{\afrak}))\Ktil$. By definition 
\begin{align}
\htil_r(t_\8) = (g_{\8, r}^{-1},1) \begin{pmatrix} \begin{pmatrix} t_1 & 0 \\ 0 & t_2 \end{pmatrix}, \begin{pmatrix} 1 & 0 \\ 0 & t_1t_2 \end{pmatrix}\end{pmatrix} (g_{\8,r},1),
\end{align}
where $g_{\8,r} = \begin{pmatrix} \epsilon_{r} & 1 \\ \epsilon_{r}^\sigma & 1 \end{pmatrix}$ and the orientation is given by $t_1\frac{\partial}{\partial t_1} \wedge t_2 \frac{\partial}{\partial t_2}$. The image in $\HH \times \HH$ is the set
\begin{align} \label{orientation2}
   \D_0^+ & =\left \{ \left (g_{\8, r}^{-1} \frac{t_1}{t_2}i,\frac{1}{t_1t_2 }i \right ), t_1,t_2>0 \right \} \nonumber = \left \{ \left (g_{\8, r}^{-1} u_1i,\frac{1}{u_2} i \right ), u_1,u_2>0 \right \},
\end{align}
where we did the change of variable $(u_1,u_2)=\left ( \frac{t_1}{t_2},t_1t_2 \right )$. It preserves the orientation since
\begin{align}
    t_1\frac{\partial}{\partial t_1} \wedge t_2\frac{\partial}{\partial t_2} = 2 u_1 \frac{\partial}{\partial u_1} \wedge u_2 \frac{\partial}{\partial u_2}.
\end{align}We can parametrize $\D_0^+$ by
\begin{align}
    \R_{>0}^2 & \longrightarrow \D_0^+ \subset \HH \times \HH \nonumber \\
    (u_1,u_2) & \longmapsto (\Phi_1(u_1),\Phi_2(u_2))
\end{align}
where
\begin{align}
    \Phi_1(u_1)& = g_{\8, r}^{-1} u_1i =- \frac{\epsilon_r+u_1^2 \epsilon_r^\sigma}{\epsilon_r^2+u_1^2(\epsilon_r^\sigma)^2}+ i \frac{u_1(\epsilon_r-\epsilon_r^\sigma)}{\epsilon_r^2+u_1^2(\epsilon_r^\sigma)^2} \nonumber \\
    \Phi_2(u_2) & = \frac{1}{u_2}i.
\end{align}
Since $\Phi_1(0)=-\frac{1}{\epsilon_r}=\alpha_r$ and $\Phi_1(\8)=-\frac{1}{\epsilon^\sigma_r}=\alpha_r^\sigma<\Phi_1(0)$ we find that $\D_0^+=\Qcal(\alpha_r) \times \Qcal(\8,0)$, at least as nonoriented manifolds. The orientation at a point $(\Phi_1(u_1),\Phi_2(u_2))$ in $\Qcal(\alpha_r) \times \Qcal(\8,0)$ is given by
\begin{align}
   2 \left (u_1\frac{\partial \Phi_{1,x}}{\partial u_1} \frac{\partial}{\partial x_1}+u_1\frac{\partial \Phi_{1,y}}{\partial u_1} \frac{\partial}{\partial y_1} \right ) \wedge u_2\frac{\partial \Phi_{2,y}}{\partial u_2} \frac{\partial}{\partial y_2}
\end{align}
where
\begin{align}
 \frac{\partial \Phi_{1,x}}{\partial u_1} & = - \sqrt{d_F} \frac{2u_1\N(\epsilon_r)}{\left (\epsilon_r^2+u_1^2(\epsilon_r^\sigma)^2 \right )^2}<0 \nonumber \\
 \frac{\partial \Phi_{1,y}}{\partial u_1} & = \sqrt{d_F} \frac{\epsilon_r^2-u_1^2(\epsilon_r^\sigma)^2 }{\left (\epsilon_r^2+u_1^2(\epsilon_r^\sigma)^2 \right )^2} \\
 \frac{\partial \Phi_{2,y}}{\partial u_2} & = -\frac{1}{u_2^2}<0. \nonumber
\end{align}
Hence the orientations on both sides of $\D_0^+ = \Qcal(\alpha_r) \times \Qcal(\8,0)$ match.

Recall that for some $g_{\afrak,r}$ in $\GL_2(\Q)^+$ we have that $g_{\afrak,r} \gamma_r(t_\afrak)$ is in $K_0(p)$. Hence we write $\htil_r(t_{\afrak})=\htil_{\afrak,r}^{-1} \ktil_\fin$ in $\Htil(\Q)^+\Ktil_\fin$ for some $\ktil_\fin$ in $\Ktil_\fin$, where
\begin{align}
 \htil_{{\afrak}, r}=\left ( g_{{\afrak},r}, \begin{pmatrix} 1 & 0 \\ 0 & \det(g_{{\afrak},r}) \end{pmatrix} \right ).
\end{align} Since $g_{{\afrak},r}\Qcal(\alpha_r)=\Qcal(\alpha_{{\afrak},r})$ it follows that 
$\D_\afrak^+=\Qcal(\alpha_{\afrak,r})\times \Qcal(\8,0)$ and that $C_{\afrak, r}$ is the image in $Y_0(p) \times Y_0(p)$ of $\Qcal(\alpha_{\afrak,r})\times \Qcal(\8,0)$.
\end{proof}

\noindent We set 
\begin{align} 
\Qcalb(\psi) & \coloneqq \sum_{[\afrak] \in \Cl(F)^+} \psi(\afrak) \left ( \Qcalb(\alpha_{\afrak,r})+\Qcalb(\alpha_{\afrak,-r}) \right )
\end{align}
so that
\begin{align}
\Qcalb(\psi) \times \Qcalb(\8,0)= C_r\otimes \psi+C_{-r} \otimes \psi.    
\end{align} 

\begin{rmk}
By using the involution that maps $[\afrak]$ to $[\afrak^\sigma]=[\afrak]^{-1}$ and the fact that $\alpha_{\afrak^\sigma,r}$ and $\alpha_{\afrak,-r}$ are in the same $\Gamma$-orbit we find that $\Qcalb(\psi)=\Qcalb(\psi^{-1})$.
\end{rmk}

\begin{comment}
\begin{prop}
If $\psi$ is a genus character ({\em i.e.} real valued), then $\Qcalb(\psi)=0$ in homology.
\end{prop}
\begin{proof}
We can rewrite $\Qcalb(\psi)$ as 
\begin{align}
    & \sum_{[\afrak] \in \Cl(F)^+} \psi(\afrak) \left ( \Qcalb(\alpha_{\afrak,r})+\Qcalb(\alpha_{\afrak,-r}) \right ) \\
    & = \sum_{i=1}^h \psi(\afrak_i) \Qcalb(\alpha_{\afrak_i,r})+ \sum_{i=1}^h \psi(\cfrak \afrak_i) \Qcalb(\tau_{\cfrak \afrak_i,r}) + \sum_{i=1}^h \psi(\afrak_i) \Qcalb(\tau_{\afrak_i,-r})+ \sum_{i=1}^h \psi(\cfrak \afrak_i) \Qcalb(\tau_{\cfrak \afrak_i,-r}) \\
\end{align}
Using the lemma \ref{rules} on the last two terms, and since $\cfrak^2=1$, we can rewrite this as
\begin{align}
\sum_{i=1}^h \psi(\afrak_i) \Qcalb(\alpha_{\afrak_i,r})+ \sum_{i=1}^h \psi(\cfrak \afrak_i) \Qcalb(\tau_{\cfrak \afrak_i,r}) + \sum_{i=1}^h \psi(\afrak_i) \Qcalb(\tau_{\cfrak^2\afrak^\sigma_i,r})+ \sum_{i=1}^h \psi(\cfrak \afrak_i) \Qcalb(\tau_{\cfrak \afrak^\sigma_i,r}).
\end{align}
The set of $\cfrak \afrak_i^\sigma$'s is also a set of representatives, and replacing the $\afrak$ and we get
\begin{align}
\sum_{i=1}^h \psi(\afrak_i) \Qcalb(\alpha_{\afrak_i,r})+ \sum_{i=1}^h \psi(\cfrak \afrak_i) \Qcalb(\tau_{\cfrak \afrak_i,r}) + \sum_{i=1}^h \psi(\cfrak \bfrak^\sigma_i) \Qcalb(\tau_{\cfrak\bfrak_i,r})+ \sum_{i=1}^h \psi(\bfrak_i)^{-1} \Qcalb(\tau_{\bfrak_i,r}).
\end{align}
\end{proof}
\end{comment}

\subsection{Intersection numbers of geodesics}

Let $\Qcal(\alpha_1,\beta_1)$ and $\Qcal(\alpha_2,\beta_2)$ be two geodesics with pairwise distinct endpoints. We fix the orientation $y^2\frac{\partial}{\partial x} \wedge \frac{\partial}{\partial y}$ on $\HH$ and define the intersection of $\Qcal(\alpha_1,\beta_1)$ and $\Qcal(\alpha_2,\beta_2)$ as in Subsection \ref{poincaredual}.  In particular, the intersection in $\HH$ with the winding element $\Qcal(0,\8)$ is
\begin{align}
 \bigl < \Qcal(0,\8),\Qcal(\alpha,\beta) \bigr >_\HH & = \begin{cases}
 1 & \textrm{if} \; \beta <0 < \alpha \\
 -1 & \textrm{if} \; \alpha <0 < \beta  \\
 0 & \textrm{else}.
 \end{cases} 
\end{align}
 Since the geodesics are $1$ dimensional we have
\begin{align}
 \bigl < \Qcal(\alpha_1,\beta_1), \Qcal(\alpha_2,\beta_2) \bigr >_\HH = -\bigl < \Qcal(\alpha_2,\beta_2),\Qcal(\alpha_1,\beta_1) \bigr >_\HH= \bigl < \Qcal(\beta_2,\alpha_2),\Qcal(\alpha_1,\beta_1) \bigr >_\HH.
\end{align}

%%%%%%%%%%%%%%%%%%%%%%%%%%%%%%%%%%%%%%%%%COMMENT
\begin{comment}

\begin{figure}[h!] 
\centering
\centerline{
\includegraphics[width=6cm]{} %
\includegraphics[width=6cm]{} 
\includegraphics[width=6cm]{}
}
\caption{Intersection of geodesics}
\label{figure1}
\end{figure}
\end{comment}
%%%%%%%%%%%%%%%%%%%%%%%%%%%%%%%%%%%%%%%%%COMMENT

If $\Qcal=\Qcal(\alpha)$ for some RM-point $\alpha$ we can define the intersection on $Y_0(p)$ by
\begin{align} \label{intersec1}
    \bigl < \Qcalb(0,\8),\Qcalb \bigr >_{Y_0(p)} \coloneqq \sum_{\gamma \in \Gamma / \Gamma[\Qcal]} \bigl < \Qcal(0,\8),\gamma \Qcal \bigr >_\HH.
\end{align}
Let $\tau$ be any point on $\Qcal$ and $[\tau,\gamma_{\Qcal} \tau)$ the half-open geodesic segment, where $\gamma_Q$ is a generator of $\Gamma[Q]=\{\pm 1 \} \times \gamma_Q^\Z$. Then
\begin{align} \label{disjointunion}
    \Qcal = \bigsqcup_{i \in \Z} [\gamma_{\Qcal}^i\tau,\gamma_{\Qcal}^{i+1} \tau)
\end{align}
and we also have
\begin{align} \label{intersec3}
    \bigl < \Qcalb(0,\8),\Qcalb \bigr >_{Y_0(p)}=\sum_{\gamma \in P\Gamma} \bigl < \Qcal(0,\8),[\gamma \tau,\gamma \gamma_{\Qcal} \tau) \bigr >_\HH.
\end{align}
The intersection on the right hand side is only-non zero for finitely many $\gamma$'s.

\begin{lem} \label{interseclem} Let $\alpha$ and $\beta$ be two real numbers and $\ybf$ a matrix in $\Mat_2(\Z)$ of positive determinant. Then
\begin{align}
    \bigl < \D_{\ybf}^+, \Qcal(\alpha,\beta) \times \Qcal(\8,0) \bigr >_{\HH \times \HH}= \bigl < \Qcal(0,\8),\ybf^{-1}\Qcal(\alpha,\beta) \bigr >_\HH.
\end{align}
\end{lem}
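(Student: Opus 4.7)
The plan is to set up a bijection between the intersection points on the two sides and then match the local intersection signs, relying on Proposition~\ref{cyclesN2} and the orientation convention fixed in Section~\ref{sectionlss}.

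\textbf{Parametrization of intersections.} By Proposition~\ref{cyclesN2} the submanifold $\D_\ybf^+$ is the image of the holomorphic embedding
\[
\phi \colon \HH \longrightarrow \HH\times\HH,\qquad \tau \longmapsto (\ybf\tau,\tau).
\]
An intersection point has the form $(\ybf\tau_0,\tau_0)$ with $\tau_0 \in \Qcal(\8,0)$ and $\ybf\tau_0 \in \Qcal(\alpha,\beta)$, so $\phi$ restricts to a bijection
\[
\D_\ybf^+ \cap \bigl(\Qcal(\alpha,\beta)\times\Qcal(\8,0)\bigr) \;\longleftrightarrow\; \ybf^{-1}\Qcal(\alpha,\beta)\cap \Qcal(\8,0).
\]
After a harmless perturbation of $\alpha,\beta$ one may assume all intersections on both sides are transverse.

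\textbf{Local sign computation.} At an intersection point $P=(\ybf\tau_0,\tau_0)$, pick tangent vectors $w_1 \in T_{\ybf\tau_0}\Qcal(\alpha,\beta)$ and $w_2 \in T_{\tau_0}\Qcal(\8,0)$. Using the Cauchy--Riemann equations for the holomorphic map $\ybf$, a direct expansion of the $4$-form
\[
\phi_*(\partial_x)\wedge\phi_*(\partial_y)\wedge (w_1,0)\wedge (0,w_2)
\]
in the basis $e_1\wedge e_2\wedge e_3\wedge e_4$ of $T_P(\HH\times\HH)$ gives the coefficient $\det(d\ybf(w_2),w_1)$. Evaluating Proposition~\ref{orientHH} at $\ybf=\id$ shows that the orientation of $\D_\ybf^+$ fixed in Section~\ref{sectionlss} is the \emph{opposite} of the pushforward orientation $\phi_*(\partial_x)\wedge\phi_*(\partial_y)$ (they differ by a factor $-y^4$), and by $\Htil(\R)^+$-equivariance the same sign discrepancy persists for general $\ybf$ of positive determinant. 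Hence the local intersection number at $P$ equals
\[
I_P\bigl(\D_\ybf^+,\Qcal(\alpha,\beta)\times\Qcal(\8,0)\bigr) \;=\; \sgn\det\bigl(w_1,d\ybf(w_2)\bigr) \;=\; I_{\ybf\tau_0}\bigl(\Qcal(\alpha,\beta),\ybf\Qcal(\8,0)\bigr).
\]

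\textbf{Conclusion.} Because $\det \ybf > 0$, the Möbius transformation $\tau\mapsto\ybf\tau$ is orientation preserving on $\HH$, and therefore $I_{\ybf\tau_0}(\Qcal(\alpha,\beta),\ybf\Qcal(\8,0)) = I_{\tau_0}(\ybf^{-1}\Qcal(\alpha,\beta),\Qcal(\8,0))$. Summing over intersection points and using the antisymmetry of intersections of $1$-cycles in $\HH$ together with $\Qcal(\8,0)=-\Qcal(0,\8)$ gives
\[
\bigl<\D_\ybf^+,\Qcal(\alpha,\beta)\times\Qcal(\8,0)\bigr>_{\HH\times\HH} = \bigl<\Qcal(0,\8),\ybf^{-1}\Qcal(\alpha,\beta)\bigr>_{\HH},
\]
as claimed. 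The main subtlety, and the only step that requires care, is the sign bookkeeping: one has to reconcile the Mathai--Quillen orientation of $\D_\ybf^+$ with the natural holomorphic parametrization $\phi$, and then track how the two reorderings (reversing $\Qcal(\8,0)\leftrightarrow\Qcal(0,\8)$ and swapping the two factors in the intersection pairing on $\HH$) combine to give the stated sign.
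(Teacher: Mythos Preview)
Your proof is correct and follows essentially the same strategy as the paper: establish the obvious bijection of intersection points, then match signs by invoking Proposition~\ref{orientHH} together with $\Htil(\R)^+$-equivariance. The only cosmetic difference is that the paper first reduces to $\ybf=\id$ via the action of $(\ybf,1)\in\Htil(\R)^+$ and then checks the sign by an explicit case analysis ($\alpha'<0<\beta'$ versus $\beta'<0<\alpha'$), whereas you keep $\ybf$ general and package the sign computation as $\sgn\det(w_1,d\ybf(w_2))$ using Cauchy--Riemann; your perturbation remark is unnecessary since distinct geodesics in $\HH$ always meet transversally.
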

\begin{proof} 
First since $\D_\ybf^+=\ybf \D_\id^+$ we have
\begin{align}
    \bigl < \D_{\ybf}^+, \Qcal(\alpha,\beta) \times \Qcal(\8,0) \bigr >_{\HH \times \HH} & = \bigl < \D_{\id}^+, \ybf^{-1}\Qcal(\alpha,\beta) \times \Qcal(\8,0) \bigr >_{\HH \times \HH} \nonumber \\
    & = \bigl < \D_{\id}^+, \Qcal(\ybf^{-1}\alpha,\ybf^{-1}\beta) \times \Qcal(\8,0) \bigr >_{\HH \times \HH}
\end{align}
where $\D_{\id}^+$ is the diagonal embedding of $\HH$ in $\HH \times \HH$. We set $\alpha'=\ybf^{-1}\alpha$ and $\beta'=\ybf^{-1}\beta$. We have a bijection
\begin{align} \label{bijecinter}
     \Qcal(\alpha',\beta') \cap  \Qcal(\8,0)  & \longrightarrow (\Qcal(\alpha',\beta') \times \Qcal(\8,0)) \cap \D_{\id}^+ \nonumber \\
    \tau & \longmapsto (\tau,\tau),
\end{align}
and these intersections contain at most one point.

We only have to check that the signs of the orientations matches when the intersection is non-empty. Let $\tau$ be a point in the intersection $\Qcal(0,\8) \cap  \Qcal(\alpha',\beta')$ and suppose that $\alpha'<0<\beta'$. Then the intersection number $\bigl < \Qcal(0,\8), \Qcal(\alpha',\beta') \bigr >_\HH$ is $-1$.
At the point $\tau$ the orientation on $\Qcal(\alpha',\beta')$ is given by $a\frac{\partial}{\partial x_1}+b\frac{\partial}{\partial y_1}$ with $a$ is a positive real number, and on $\Qcal(\8,0)$ by $-\frac{\partial}{\partial y_2}$. Hence the orientation on $\Qcal(\alpha',\beta') \times \Qcal(\8,0)$ is given by $-\left ( a\frac{\partial}{\partial x_1}+b\frac{\partial}{\partial y_1} \right )\wedge \frac{\partial}{\partial y_2}$. By Proposition \ref{orientHH} the orientation of $\D_\id^+$ is given (up to a positive scalar) by $-\left (\frac{\partial}{\partial x_1}+\frac{\partial}{\partial x_2} \right )\wedge\left (\frac{\partial}{\partial y_1}+\frac{\partial}{\partial y_2} \right )$. Since
\begin{align}
 o(T_{X(\tau,\tau)}\D_\id^+) \wedge o(T_{X(\tau,\tau)}\Qcal(\alpha,\beta) \times \Qcal(\8,0))  & = \left (\frac{\partial}{\partial x_1}+\frac{\partial}{\partial x_2} \right )\wedge\left (\frac{\partial}{\partial y_1}+\frac{\partial}{\partial y_2} \right ) \wedge \left ( a\frac{\partial}{\partial x_1}+b\frac{\partial}{\partial y_1} \right )\wedge \frac{\partial}{\partial y_2} \nonumber \\
& = -a \frac{\partial}{\partial x_1} \wedge \frac{\partial}{\partial y_1} \wedge \frac{\partial}{\partial x_2} \wedge \frac{\partial}{\partial y_2}
\end{align}
and $a$ is positive we see that
\begin{align} \label{orient2}
    \bigl < \D_\id^+,  \Qcal(\alpha',\beta') \times \Qcal(\8,0) \bigr >_{\HH \times \HH} = -1.
\end{align}
When $\alpha'<0<\beta'$ then $a$ is negative and the intersection numbers $\bigl < \Qcal(0,\8), \Qcal(\alpha',\beta') \bigr >_\HH$ and $\bigl < \D_\id^+,  \Qcal(\alpha',\beta') \times \Qcal(\8,0) \bigr >_{\HH \times \HH}$ are both equal to $1$.
\end{proof}

\begin{prop} \label{geodinter} For $\varphi_{\fin}=\id_{\widehat{\Delta}_0(p)}$ we have
\[ \int_{\Qcalb(\alpha_{\afrak,r}) \times \Qcalb(\8,0)} \Theta_n(v,\varphi_\fin) = 2 \bigl < \Qcalb(0,\8),T_n\Qcalb(\alpha_{\afrak,r}) \bigr >.\]
\end{prop}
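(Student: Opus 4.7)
The plan is to combine Proposition \ref{intertop} with an explicit translation of intersection numbers on $M_K = Y_0(p) \times Y_0(p)$ into intersection numbers of geodesics on $Y_0(p)$. First, I apply Proposition \ref{intertop} to the single cycle $C_{\afrak, r}$. With $N = 2$ and $\kappa = 2$ (as observed in the Remark following the identification $M_{\Ktil} = Y_0(p)^2$, since $(-\id_2, \id_2) \in \Htil(\Q)^+ \cap \Ktil_0(p)$), and using Proposition \ref{geodesic} to identify $C_{\afrak, r}$ with $\Qcalb(\alpha_{\afrak, r}) \times \Qcalb(\8, 0)$, one obtains
\[\int_{\Qcalb(\alpha_{\afrak, r}) \times \Qcalb(\8, 0)} \Theta_n(v, \varphi_\fin) = 2\langle C_n(\varphi_\fin), C_{\afrak, r}\rangle_{M_K}.\]
It therefore suffices to prove $\langle C_n(\varphi_\fin), C_{\afrak, r}\rangle_{M_K} = \langle \Qcalb(0, \8), T_n \Qcalb(\alpha_{\afrak, r})\rangle_{Y_0(p)}$.

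Next, I unfold the left-hand side using \eqref{noncompactinter} and \eqref{interdef}. Since $\varphi_\fin = \id_{\widehat{\Delta}_0(p)}$ and the condition $Q(\xbf) = 2 \det \xbf = 2n$ restricts summation to $\xbf \in \Delta_0(p)^{(n)}$, combining the outer sum over $\xbf \in \Gamma\backslash\Delta_0(p)^{(n)}/\Gamma$ with the inner $\Ocal^{\times, +}$-orbit sum over $\Ocal^{\times, +}\backslash \Gamma_0(p)\xbf\Gamma_0(p)$ recovers a single sum indexed by $\Ocal^{\times, +}\backslash\Delta_0(p)^{(n)}$. Using Proposition \ref{cyclesN2} to identify $\D^+_\ybf$ with the graph $\{(\ybf\tau,\tau)\}$ and Proposition \ref{geodesic} for $\D^+_{\afrak, r}$, Lemma \ref{interseclem} gives
\[\langle C_n(\varphi_\fin), C_{\afrak, r}\rangle_{M_K} = \frac{1}{2}\sum_{[\ybf] \in \Ocal^{\times, +}\backslash\Delta_0(p)^{(n)}} \langle \Qcal(0, \8), \ybf^{-1}\Qcal(\alpha_{\afrak, r})\rangle_\HH.\]

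Finally, I compare this sum with $\langle \Qcalb(0, \8), T_n\Qcalb(\alpha_{\afrak, r})\rangle_{Y_0(p)}$ via a Hecke double-coset reorganization. Using the decomposition $\Delta_0(p)^{(n)} = \bigsqcup_{\delta \in R_n(\Qcal_{\afrak, r})}\Gamma[\Qcal_{\afrak, r}]\delta\Gamma$ from \eqref{doublecoset2}, and writing $\ybf = \gamma_0 \delta \gamma$, the geodesic $\ybf^{-1}\Qcal_{\afrak, r}$ reduces to $\gamma^{-1}\delta^{-1}\Qcal_{\afrak, r}$ because $\gamma_0 \in \Gamma[\Qcal_{\afrak, r}]$ fixes $\Qcal_{\afrak, r}$. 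Expanding
\[\langle \Qcalb(0, \8), T_n\Qcalb(\alpha_{\afrak, r})\rangle_{Y_0(p)} = \sum_{\delta \in R_n(\Qcal_{\afrak, r})}\langle \Qcalb(0, \8), \overline{\delta^{-1}\Qcal_{\afrak, r}}\rangle_{Y_0(p)}\]
via \eqref{intersec1} produces the same sum of geodesic intersections, but with the $\gamma_0$-quotient taken over the full stabilizer $\Gamma[\Qcal_{\afrak, r}]$ rather than its index-$2$ subgroup $\gamma_r(\Ocal^{\times, +})$ (the extra element being $-\id$, which lies in $\Gamma_0(p)$ but not in $\gamma_r(\Ocal^{\times, +})$ since totally positive units are not $-1$). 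This index-$2$ discrepancy produces the factor $2$, and combining with the previous two steps completes the proof.

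The main obstacle is this final combinatorial step: verifying precisely how the $\Ocal^{\times, +}$-orbit decomposition of $\Delta_0(p)^{(n)}$ arising on the Kudla-Millson side matches, up to the index-$2$ factor, the Hecke double-coset structure governing $T_n$, and checking that the $\Ocal^{\times, +}$-equivalence induced by the embedding $\gamma_r$ is geometrically compatible with the stabilizer of $\Qcal_{\afrak, r}$ in $\Gamma_0(p)$.
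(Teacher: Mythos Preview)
Your approach is essentially correct and arrives at the same endpoint as the paper, but the organization differs. The paper does \emph{not} invoke Proposition~\ref{intertop}; instead it unfolds the integral directly. It writes $\Theta_n = \sum_{\xbf \in \Delta_0(p)^{(n)}} \varphi^0(\sqrt v\,\xbf)$, uses the double-coset decomposition $\Delta_0(p)^{(n)} = \bigsqcup_{\delta \in R_n(\Qcal)} \Gamma[\Qcal]\,\delta\,\Gamma$ with $\Qcal = \Qcal(\alpha_{\afrak,r})$, integrates over the fundamental domain $[\tau,\gamma_\Qcal\tau)\times\Qcal(\8,0)$, and unfolds the $\Gamma[\Qcal]$–sum against this domain. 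The factor $2$ then appears transparently from the subgroup $\{\pm 1\}\subset\Gamma[\Qcal]$ acting trivially on $\Qcal$. After that, it applies \eqref{stepinproof2} and Lemma~\ref{interseclem} termwise, exactly as you do. Your route---applying Proposition~\ref{intertop} first and then translating the abstract intersection number---is more conceptual but forces you to identify the $\Ocal^{\times,+}$–action concretely, which is where the paper's direct argument is cleaner.

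There is one imprecision worth flagging. For general $\afrak$, the copy of $\Ocal^{\times,+}$ that arises from unfolding $\int_{C_{\afrak,r}}$ is \emph{not} $\gamma_r(\Ocal^{\times,+})$ acting by left multiplication, but its conjugate $g_{\afrak,r}\,\gamma_r(\Ocal^{\times,+})\,g_{\afrak,r}^{-1}$: this is the group that actually stabilises $\D^+_\afrak = \Qcal_{\afrak,r}\times\Qcal(\8,0)$, and hence the one for which the terms $\langle \Qcal(0,\8), \ybf^{-1}\Qcal_{\afrak,r}\rangle_\HH$ are well-defined on orbits. With that correction your index-$2$ claim becomes the statement that $\{\pm 1\}\cdot g_{\afrak,r}\gamma_r(\Ocal^{\times,+})g_{\afrak,r}^{-1} = \Gamma[\Qcal_{\afrak,r}]$, which amounts to saying that the immersion in Proposition~\ref{geodesic} is one-to-one onto $\Qcalb(\alpha_{\afrak,r})\times\Qcalb(\8,0)$ rather than a multi-cover; this is implicit in the paper but you should note it. The paper's direct unfolding via $\Gamma[\Qcal] = \{\pm 1\}\times\gamma_\Qcal^\Z$ sidesteps this bookkeeping entirely.
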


\begin{proof} By definition, for $\varphi_\fin = \id_{\widehat{\Delta}_0(p)}$ we have
\begin{align}
    \Theta_n(v,\varphi_\fin) = \sum_{\substack{\xbf \in \Mat_2(\Q) \\ Q(\xbf,\xbf)=2n }} \varphi_\fin(\xbf) \varphi^0(\sqrt{v}\xbf) = \sum_{\xbf \in \Delta_0(p)^{(n)}} \varphi^0(\sqrt{v}\xbf).
\end{align}We write $\Qcal=\Qcal(\alpha_{\afrak,r})$. Let $\tau$ be any point on $\Qcal$ and $[\tau,\gamma_{\Qcal} \tau)$ the half-open geodesic segment. The group $  \Gamma[\Qcal] \times \left ( \Gamma/\Gamma[\delta^{-1} \Qcal]  \right ) $ acts transitively on $\Gamma[\Qcal] \delta \Gamma$ , where $\Gamma[\delta^{-1} \Qcal]=\delta^{-1} \Gamma[\Qcal] \delta$. Using the decomposition \eqref{doublecoset2}  we write 
\begin{align}
    \Theta_n(v,\varphi_\fin) & = \sum_{\xbf \in \Delta_0(p)^{(n)}} \varphi^0(\sqrt{v}\xbf) \nonumber \\
    & = \sum_{\delta \in R_n(\Qcal)} \sum_{\gamma_1 \in \Gamma[\Qcal]} \sum_{\gamma_2 \in \Gamma/\Gamma[\delta^{-1} \Qcal]} \varphi^0(\sqrt{v}\gamma_1\delta \gamma_2^{-1}).
\end{align}
 We get
\begin{align}
   \int_{\Qcalb \times \Qcalb(\8,0)} \Theta_n(v,\varphi_\fin) & = \sum_{\delta \in R_n(\Qcal)}   \sum_{\gamma_1 \in \Gamma[\Qcal]} \sum_{\gamma_2 \in \Gamma / \Gamma[\delta^{-1} \Qcal]} \int_{[\tau, \gamma_{\Qcal} \tau ) \times \Qcal(\8,0)} \varphi^0(\sqrt{v}\gamma_1\delta \gamma_2^{-1}).
\end{align}
By the invariance of $\varphi^0$ we have $\varphi^0(\sqrt{v}\gamma_1\delta \gamma_2^{-1})=(\gamma_1^{-1},1)^\ast \varphi^0(\sqrt{v}\delta\gamma_2^{-1})$. Thus using \eqref{disjointunion} we get
\begin{align}
 \sum_{\gamma_1 \in \Gamma[\Qcal]} \int_{[\tau, \gamma_{\Qcal} \tau ) \times \Qcal(\8,0)} \varphi^0(\sqrt{v}\gamma_1\delta \gamma_2^{-1}) & = \sum_{\gamma_1 \in \Gamma[\Qcal]} \int_{[\tau, \gamma_{\Qcal} \tau ) \times \Qcal(\8,0)} (\gamma_1^{-1},1)^\ast \varphi^0(\sqrt{v}\delta\gamma_2^{-1}) \nonumber \\
 & = \sum_{\gamma_1 \in \Gamma[\Qcal]} \int_{[\gamma_1^{-1}\tau, \gamma_1^{-1}\gamma_{\Qcal} \tau ) \times \Qcal(\8,0)} \varphi^0(\sqrt{v}\delta\gamma_2^{-1}) \nonumber \\
 & =  2 \int_{\Qcal \times \Qcal(\8,0)} \varphi^0(\sqrt{v}\delta\gamma_2^{-1} ),
\end{align}
and the factor $2$ comes from the subgroup $\{ \pm 1\}$ of $\Gamma[\Qcal]$. Since $\D_\afrak^+=\Qcal(\alpha_{\afrak,r}) \times \Qcal(\8,0)$ it follows from \eqref{stepinproof2} in the proof of Proposition \ref{intertop} that
\begin{align}
  \int_{\Qcal \times \Qcal(\8,0)} \varphi^0(\sqrt{v}\delta \gamma_2^{-1} )  & = \bigl < \D_{\delta \gamma_2^{-1}}^+, \Qcal \times \Qcal(\8,0) \bigr >_{\HH \times \HH},
\end{align}
and by Lemma \ref{interseclem} we have
\begin{align}
  \bigl < \D_{\delta \gamma_2^{-1}}^+, \Qcal \times \Qcal(\8,0) \bigr >_{\HH \times \HH}  & = \bigl < \Qcal(0,\8), \gamma_2 \delta^{-1} \Qcal \bigr >_\HH.
\end{align}
Thus
\begin{align}
   \int_{\Qcalb \times \Qcalb(\8,0)} \Theta_n(v,\varphi_\fin) & =  2 \sum_{\delta \in R_n(\Qcal)}  \sum_{\gamma_2 \in \Gamma / \Gamma[\delta^{-1} \Qcal]} \bigl < \Qcal(0,\8),
  \gamma_2 \delta^{-1}  \Qcal \bigr >_\HH \nonumber \\
  & = 2 \sum_{\delta \in R_n(\Qcal)} \bigl < \Qcalb(0,\8),
  \overline{\delta^{-1} \Qcal} \bigr >_{Y_0(p)} \nonumber \\
  & = 2 \bigl < \Qcalb(0,\8),
  T_n\overline{\Qcal} \bigr >_{Y_0(p)}
\end{align}
where we use \eqref{intersec1} in the second equality.
\end{proof}

\subsection{Two lattices} Let $\widehat{\dfrak}^{-1} \coloneqq \sideset{}{'} \prod_w \dfrak_w^{-1}$, where $\dfrak^{-1}_w$ is the inverse different ideal. Let also $\dfrak^{-1}_v \coloneqq \prod_{w \mid v} \dfrak^{-1}_w$. Define the following lattices in $\A_F^2$
\begin{align}
    L^{(r)} \coloneqq \left \{ \left . \begin{pmatrix}
    x \\x'
    \end{pmatrix} \in \widehat{\Ocal}^2 \; \right \vert \; x \in \widehat{\dfrak}^{-1}, \quad w_\p(x')\geq 1, \quad w_{\p^\sigma}(x')=0 \right \}, \\
    L^{(-r)} \coloneqq \left \{ \left . \begin{pmatrix}
    x \\x'
    \end{pmatrix} \in \widehat{\Ocal}^2 \; \right  \vert \; x \in \widehat{\dfrak}^{-1}, \quad w_\p(x')= 0, \quad w_{\p^\sigma}(x')\geq 1 \right \}.
\end{align}
Recall the isomorphism
\begin{align} \label{ismoFmat}
    F_\Q^2 & \longrightarrow \Mat_2(\Q) \nonumber \\
    \xbf=\begin{pmatrix} x \\ x' \end{pmatrix} & \longmapsto [ x', SAx ],
\end{align}
defined in \eqref{isommat2}, where we identify $F_\Q$ with $\Q^2 $ via the $\Z$-basis $(\epsilon_r,1)$ of $\Ocal$. After passing to the adèles and composing with the isomorphism between $\A_F$ and $F_\A$, we get an isomorphism between $\A_F^2$ and $\Mat_2(\A)$.
\begin{lem} \label{latticeisom} By the isomorphism between $\A_F^2$ and $\Mat_2(\A)$ the lattice
$L^{(r)}$ correspond to $\widehat{\Delta}_0(p)$. If we replace $(\epsilon_r,1)$ with $(\epsilon_{-r},1)$ then $L^{(-r)}$ correspond to $\widehat{\Delta}_0(p)$.
\end{lem}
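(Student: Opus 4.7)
The plan is to unwind the identification \eqref{ismoFmat} in coordinates with respect to the basis $(\epsilon_r,1)$ and then match the defining conditions of $L^{(r)}$ against those of $\widehat{\Delta}_0(p)$ column by column and prime by prime. As a first step I would compute the Gram matrix $A = (\Tr(\epsilon_i\epsilon_j))_{ij}$: using $\epsilon_r^2 = -r\epsilon_r - N_0/2$ one finds $A = \begin{psmallmatrix} r^2-N_0 & -r \\ -r & 2 \end{psmallmatrix}$ with $\det A = r^2 - 2N_0 = d_F$, so that with $x = a\epsilon_r + b$ and $x' = c\epsilon_r+d$ the map \eqref{ismoFmat} takes the explicit form
\[
M(\xbf) = [x', SAx] = \begin{pmatrix} c & ra-2b \\ d & (r^2-N_0)a-rb \end{pmatrix}.
\]
This is the only calculation I would carry out in detail; everything else is a structural comparison.

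Next I would match the two columns of $M(\xbf)$ with the two factors of $L^{(r)}$. The first column $(c,d)^T$ is integral if and only if $x' \in \widehat{\Ocal}$. For the second column, observe that $SA(a,b)^T$ differs from $A(a,b)^T$ only by a swap and a sign, and that $A(a,b)^T$ is the coordinate vector of $x$ in the trace-dual basis of $(\epsilon_r,1)$, which generates $\dfrak^{-1}$; hence the second column is integral if and only if $x \in \widehat{\dfrak}^{-1}$. This settles the integrality conditions at every prime $q \neq p$.

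The only prime-specific work is at $p$: translating the Iwahori condition (top-left in $\Z_p^\times$, bottom-left in $p\Z_p$) into valuation conditions on $x'$. Via $\varsigma_{p,r}$, the element $\epsilon_r$ corresponds to $\bigl(\tfrac{\beta-r}{2},\tfrac{-\beta-r}{2}\bigr)\in p\Z_p \times \Z_p^\times$, since $\beta \equiv r \pmod p$ and $p\nmid 2r$ (the latter holds because $p$ splits, so $p\nmid d_F$). Inspecting $x'_p = c_p\epsilon_r + d_p$ in these local coordinates then shows that $w_\p(x') \geq 1$ is equivalent to $d_p \in p\Z_p$ and that, given this, $w_{\p^\sigma}(x') = 0$ is equivalent to $c_p \in \Z_p^\times$. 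These are exactly the entries of the left column of $M(\xbf)$.

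For the ``$-r$'' assertion, replacing the basis swaps $\varsigma_{p,r}$ with $\varsigma_{p,-r}$, which in turn swaps the two factors of $\Ocal_p \cong \Z_p \times \Z_p$ with respect to the fixed labels $\p,\p^\sigma$ (because $\beta_{p,-r} = -\beta_{p,r}$). The defining conditions of $L^{(-r)}$ are precisely those of $L^{(r)}$ with $\p$ and $\p^\sigma$ interchanged, so the two swaps cancel and one recovers the same Iwahori condition on $(c_p, d_p)$. The main, and really only, obstacle is this last bookkeeping step: keeping straight which completion of $\Ocal$ is identified with which $\Z_p$-factor as the choice of square root $\pm r$ changes; once this is sorted out, the rest is linear algebra.
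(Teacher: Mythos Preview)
Your argument is correct and essentially parallels the paper's: compute $[x',\,SAx]$ explicitly in the basis $(\epsilon_r,1)$ and then match the two columns of $\widehat{\Delta}_0(p)$ against the local conditions defining $L^{(r)}$ prime by prime, with the only nontrivial work happening at $p$ via $\varsigma_{p,\pm r}$. Your trace-dual observation for the second column is a slightly cleaner, uniform replacement for the paper's case split into $v\nmid d_F$ (where it uses $A\in\GL_2(\Z_v)$) and $v\mid d_F$ (an explicit entry-by-entry check that is really the same trace-pairing criterion written out); incidentally, your off-diagonal entry $-r$ in $A$ is the correct one, the paper's displayed $A_r$ has a harmless sign slip.
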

\begin{proof} We have 
\begin{align}
    F_v\oplus F_v=
    \begin{cases}  \Q_v^2 \oplus \Q^2_v & \textrm{if $v$ split}\\
    F\otimes \Q_v \oplus F \otimes \Q_v & \textrm{otherwise},
    \end{cases}
\end{align}
where we write the vectors as line vectors instead of column vectors. We decompose $L^{(r)} = \prod_v L_v^{(r)}$ and $\widehat{\dfrak}^{-1} = \prod_v \dfrak^{-1}_v$ where
\begin{align}
    L_v^{(r)}=
    \begin{cases}  \Ocal_p \oplus (p\Z_p,\Z^\times_p) & \textrm{if $v=p$ }\\
    \Ocal_v \oplus \Ocal_v & \textrm{if $v \nmid pd_F$} \nonumber \\
    \dfrak_v^{-1} \oplus \Ocal_v   & \textrm{if $v \mid d_F$}.
    \end{cases}
\end{align}
If $v\mid d_F$, then $(p)=\p_v^2$ and $\dfrak_v^{-1}=\p^{-1}_v$, since $\dfrak^{-1}=\frac{1}{\sqrt{d_F}}\Ocal$. Similarly
\begin{align}
    L_v^{(-r)}=
    \begin{cases}  \Ocal_p \oplus (\Z^\times_p,p\Z_p)   & \textrm{if $v=p$ }\\
    \Ocal_v \oplus \Ocal_v & \textrm{if $v \nmid pd_F$} \\
    \dfrak_v^{-1} \oplus \Ocal_v  & \textrm{if $v \mid d_F$}.
    \end{cases}
\end{align} Let $A_r=\transp{g}_{\8,r}g_{\8,r}$ be the matrix $A=\transp{g}_\8g_\8$ relative to the basis $(\epsilon_r,1)$, which is given by
\begin{align}
    A_{r}=\begin{pmatrix}
     \frac{r^2+d_F}{2} & {r} \\ {r} & 2
    \end{pmatrix}.
\end{align}
We identity $F$ with $\Q^2$ by mapping $a\epsilon_{r}+b$ to $\begin{pmatrix}
a \\ b
\end{pmatrix}$. We look place by place.
\begin{itemize}
\item Suppose that $v = p$. We have $A_r$ is in $\GL_2(\Z_p)$, hence the isomorphism \eqref{ismoFmat} identifies
    \begin{align}
        F^2_{\Q_p} & \simeq \Mat_2(\Q_p) \nonumber \\
       \begin{pmatrix}
          \epsilon_r \Z_p+\Z_p \\
         \epsilon_r \Z^\times_p+p\Z_p
        \end{pmatrix}  & \simeq \begin{pmatrix}
        \Z_p^\times & \Z_p \\ p\Z_p & \Z_p
        \end{pmatrix}=\Delta_0(p)_p
    \end{align}
    On the other hand the isomorphism $\varsigma_{p,r} \colon F_{\Q_p} \longrightarrow F_p$ identifies $\epsilon_r \Z^\times_p+p\Z_p$ with $(p\Z_p,\Z_p^\times)$, since we chose $r$ such that $\epsilon_r$ is in $p\Z_p$ and $\epsilon_{-r}$ is in $\Z_p^\times$. Hence we have
    \begin{align}
        F^2_{\Q_p} & \simeq F_p^2 \nonumber \\
        \begin{pmatrix}
          \epsilon_r \Z_p+\Z_p \\
         \epsilon_r \Z^\times_p+p\Z_p
        \end{pmatrix} &   \simeq \Ocal_p \oplus (p\Z_p,\Z_p^\times) =L_p^{(r)}.
    \end{align}
    At this place if we replace $\epsilon_r$ by $\epsilon_{-r}$ we identify $\epsilon_{-r} \Z^\times_p+p\Z_p$ with $(\Z_p^\times,p\Z_p)$, hence $\Delta_0(p)_v$ is identified with $L_v^{(-r)}$.
    \item Suppose that $v \nmid pd_F$. We have $A_r$ is in $\GL_2(\Z_v)$, hence the isomorphism \eqref{ismoFmat} identifies
    \begin{align}
        F^2_{\Q_v} & \simeq \Mat_2(\Q_v) \nonumber \\
        \begin{pmatrix}
         \epsilon_r \Z_v+\Z_v \\
         \epsilon_r \Z_v+\Z_v
        \end{pmatrix}  & \simeq \Mat_2(\Z_v)=\Delta_0(p)_v
    \end{align}
    On the other hand we have
    \begin{align}
        F^2_{\Q_v} & \simeq F_v^2 \nonumber \\
        \begin{pmatrix}
         \epsilon_r \Z_v+\Z_v \\
         \epsilon_r \Z_v+\Z_v
        \end{pmatrix} = \Ocal^2 \otimes \Z_v & \simeq \Ocal_v \oplus \Ocal_v=L_v^{(r)}.
    \end{align}
    
    \item Finally suppose that $v \mid d_F$ is ramified. At this place the isomorphism \eqref{ismoFmat} is
\begin{align} \label{isomvmat}
    F_v^2 & \longrightarrow \Mat_2(\Q_v) \nonumber\\
    \begin{pmatrix}  c\epsilon_{r}+d \\ a\epsilon_{r}+b
     \end{pmatrix} & \longmapsto \begin{pmatrix}
     a & -2d-cr \\
     b & ds+c\Tr(\epsilon_{ r}^2)
    \end{pmatrix},
\end{align}
and similarly for $\epsilon_{-r}$. By definition $c\epsilon_{r}+d$ is in $\dfrak_v^{-1}$ if and only if $2d+cr $ and $dr+c\Tr(\epsilon_{r}^2)$ are both in $\Z_v$. Hence the isomorphism \eqref{isomvmat} identifies
\begin{align}
        F_v^2 & \simeq \Mat_2(\Q_v) \nonumber \\
        L_v^{(r)} =\dfrak_v^{-1} \oplus  \Ocal_v   & \simeq \Mat_2(\Z_v)=\Delta_0(p)_v.
    \end{align}
\end{itemize}

\end{proof}

\subsection{$p$-smoothing of Eisenstein series}

For a squarefree ideal $I$ we define the Schwartz functions $\varphi^{I}$ in $\Scal(\A_{F}^2)$ by $\varphi^I_\8=\varphi_\8$ as before and
\begin{align}
\varphi^{I}_w\begin{pmatrix}
x \\ x'
\end{pmatrix} \coloneqq 
\begin{dcases}
\id_{\dfrak^{-1}_{w}}(x) \left ( \id_{\Ocal_{w}}(x')-\id_{\m_w}(x') \right ) & \quad \textrm{if} \quad w(I)>0 \\
\id_{\dfrak^{-1}_{w}}(x)\id_{\Ocal_{w}}(x') & \quad  \textrm{if} \quad w(I)=0.
\end{dcases}
\end{align}
Let $\phi^{I} \coloneqq \Fcal\varphi^{I}$ be its partial Fourier transform.

\begin{lem} We have \begin{align}
\phi^{I}_w \begin{pmatrix}
x \\ x'
\end{pmatrix} = 
\vol(\dfrak^{-1}_w) \begin{dcases}
\id_{\Ocal_{w}}(x) \left ( \id_{\Ocal_{w}}(x')-\id_{\m_w}(x') \right ) & \quad \textrm{if} \quad w(I)>0 \\
\id_{\Ocal_{w}}(x)\id_{\Ocal_{w}}(x') & \quad  \textrm{if} \quad w(I)=0.
\end{dcases}
\end{align}
\end{lem}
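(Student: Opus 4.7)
The proof will be a direct computation using the fact that $\varphi^I_w$ factors as a product of a function of $x$ and a function of $x'$, while the partial Fourier transform $\Fcal$ only touches the first variable.

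First I would observe that in both cases ($w(I)>0$ and $w(I)=0$) we can write
\[\varphi^I_w\begin{pmatrix} x \\ x' \end{pmatrix}= \id_{\dfrak^{-1}_w}(x)\, g^I_w(x'),\]
where $g^I_w(x') = \id_{\Ocal_w}(x')-\id_{\m_w}(x')$ if $w(I)>0$ and $g^I_w(x')=\id_{\Ocal_w}(x')$ otherwise. Applying the definition of $\Fcal$ then gives
\[\phi^I_w\begin{pmatrix} x \\ x' \end{pmatrix} = g^I_w(x') \int_{F_w} \id_{\dfrak^{-1}_w}(z)\,\chi_w(-xz)\, dz,\]
so the task reduces to computing the Fourier transform of $\id_{\dfrak^{-1}_w}$.

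Next I would compute this Fourier transform using Pontryagin duality. By definition of the inverse different, the pairing $(x,z)\mapsto \chi_w(xz)$ identifies $\Ocal_w$ with the annihilator of $\dfrak^{-1}_w$; equivalently, the restriction of $z\mapsto \chi_w(-xz)$ to $\dfrak^{-1}_w$ is the trivial character when $x\in \Ocal_w$ and a nontrivial character of the compact group $\dfrak^{-1}_w$ otherwise. Therefore
\[\int_{F_w} \id_{\dfrak^{-1}_w}(z)\,\chi_w(-xz)\, dz = \begin{cases} \vol(\dfrak^{-1}_w) & \text{if } x\in \Ocal_w,\\ 0 & \text{if } x\notin \Ocal_w,\end{cases}\]
i.e.\ it equals $\vol(\dfrak^{-1}_w)\,\id_{\Ocal_w}(x)$.

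Substituting this back into the expression for $\phi^I_w$ yields the claimed formula
\[\phi^I_w\begin{pmatrix} x \\ x' \end{pmatrix} = \vol(\dfrak^{-1}_w)\,\id_{\Ocal_w}(x)\,g^I_w(x'),\]
which is exactly the statement of the lemma in both cases. There is no real obstacle here: the only nontrivial input is the standard fact that $\Ocal_w$ and $\dfrak^{-1}_w$ are dual lattices with respect to $\chi_w$, which is built into the choice of self-dual Haar measure fixed in Section~\ref{notgen}.
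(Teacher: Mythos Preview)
Your proof is correct and follows essentially the same approach as the paper: factor $\varphi^I_w$ as a product in the two variables, apply $\Fcal$ only to the first, and compute the Fourier transform of $\id_{\dfrak^{-1}_w}$ via orthogonality of characters to obtain $\vol(\dfrak^{-1}_w)\,\id_{\Ocal_w}$. The paper phrases the Fourier computation slightly more generally (for $\id_{\m_w^m\dfrak^{-1}_w}$), but only the case $m=0$ is used, which is exactly what you do.
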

\begin{proof}
First note that if $\chi \colon K \longrightarrow U(1)$ is a character on a compact group $K$ then
\begin{align} \int_K \chi(y)d\mu(y)= \begin{cases}
\mu(K) & \textrm{if $\chi=1$} \\
0 & \textrm{if $\chi \neq 1$}.
\end{cases}
\end{align}
We have
\begin{align}
{\id}^\vee_{\m_w^m \dfrak^{-1}_w}(x) = \int_{\m_w^m \dfrak^{-1}_w}\chi_x(y)d\mu(y),
\end{align}
where $\chi_x(y) = \chi(xy)$. The character $\chi_x$ is trivial on $\m_w^m \dfrak^{-1}_w$ if and only if $x$ is in $\m_w^{-m} \Ocal_w$, hence ${\id}^\vee_{\m_w^m \dfrak^{-1}_w}=q_w^{-m}\vol(\dfrak_w^{-1})\id_{\m_w^{-m} \Ocal_w}$. 
Since $\varphi^I\begin{pmatrix} x \\ x' \end{pmatrix}=\varphi_1(x)\varphi_2(x')$ is a product of two Schwartz functions $\varphi_1$ and $\varphi_2$ in $\Scal(\A_F)$, we have $\phi^I\begin{pmatrix} x \\ x' \end{pmatrix}={\varphi}^\vee_1(x)\varphi_2(x')$, where ${\varphi}^\vee_1(x)$ is the Fourier transform of $\varphi_1$. The lemma then follows from the previous computation.
\end{proof}

\noindent The Eisenstein series
\begin{align}
    E(\tau_1,\tau_2,\psi) \coloneqq E(\tau_1,\tau_2,\id_{\widehat{\Ocal}^2},\psi)
\end{align}
is a Hilbert modular form of parallel weight $1$ for $\SL_2(\Ocal)$. Its diagonal restriction $E(\tau,\tau,\psi,0)$ vanishes, since it is a weight $2$ modular form for $\SL_2(\Z)$. For an arbitrary ideal $I$ we define the Eisenstein series
\begin{align}
    E^{I}(\tau_1,\tau_2,\psi) \coloneqq E(\tau_1,\tau_2, \phi_\fin^{I},\psi),
\end{align}
which is of level 
\begin{align}
\Gamma_0(I) = \left \{ \left . \begin{pmatrix} a & b \\ c & d \end{pmatrix} \in \SL_2(\Ocal) \right \vert c \in I \right \}.
\end{align}
When $I=(p)$ we call it the $p$-stabilization of $E(\tau_1,\tau_2,\psi)$.

Let $\varphi_1^{I}$ and $\varphi_2^{I}$ be the restriction of $\varphi_\fin^{I}$ to the isotropic lines $l_1$ and $l_2$. When $w(I)$ is positive we have
\begin{align}
    \varphi_w^I \begin{pmatrix}
x \\ 0
\end{pmatrix} = \id_{\dfrak^{-1}_{w}}(x) \left ( \id_{\Ocal_{w}}(0)-\id_{\m_w}(0) \right ) =0.
\end{align}
Hence the function $\varphi_1^{I}$ vanishes.

\begin{cor} \label{cordpv} We have
\begin{align}
    E^{(p)}\left (\tau,\tau,\psi \right ) =
    \begin{dcases}
   L^{(p)}(\psi,0)- 4\sum_{n=1}^\8 \bigl < \Qcalb(0,\8), T_n\Qcalb(\psi) \bigr >_{Y_0(p)} e^{2i\pi n \tau} & \textrm{if $p$ is split}, \\
    0 & \textrm{if $p$ is inert}.
    \end{dcases}
\end{align}
\end{cor}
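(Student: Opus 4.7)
The plan is to specialize Theorem \ref{fouriercoeffs} to $N=2$ with $F = \Q(\sqrt{D})$, taking the Schwartz function $\varphi_\fin = \varphi^{(p)}_\fin$ of the preceding subsection. By construction, $\phi^{(p)}_\fin = \Fcal\varphi^{(p)}_\fin$ yields $E^{(p)}(\tau_1, \tau_2, \psi)$ on the left-hand side, and the hypothesis of the theorem is satisfied because $\varphi^{(p)}_1 = 0$, as observed just before the statement of the corollary. With $N=2$ and $\kappa = 2$ (as in the remark following the isomorphism $M_\Ktil \simeq Y_0(p) \times Y_0(p)$), the prefactor $(-1)^N 2^{N-1}\kappa$ equals $4$.

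The constant term $\zeta_\fin(\varphi^{(p)}_2, \psi, 0)$ splits as a product of local zeta integrals. At $w \nmid p$ one obtains the standard factor $\vol^\times(\Ocal_w^\times) L_w(\psi, 0)$ via \eqref{zeta2}; at $w \mid p$ the indicator $\id_{\Ocal_w^\times}(x') = \id_{\Ocal_w}(x') - \id_{\m_w}(x')$ reduces the local integral to $\vol^\times(\Ocal_w^\times)$, thereby removing the Euler factor $L_w(\psi, 0) = (1-\psi(\p_w))^{-1}$. Using $\vol^\times(\widehat{\Ocal}^\times) = 1$, this yields exactly $L^{(p)}(\psi, 0)$.

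For the positive Fourier coefficients in the split case, the goal is to identify $\langle C_n(\varphi^{(p)}_\fin), C \otimes \psi\rangle_{M_K}$ with the Hecke intersection $\langle \Qcalb(0, \infty), T_n \Qcalb(\psi)\rangle_{Y_0(p)}$. Lemma \ref{latticeisom} identifies the lattice $L^{(r)}$ with $\widehat{\Delta}_0(p)$ under the basis $\{\epsilon_r, 1\}$; Proposition \ref{geodesic} identifies $C_{\afrak, r}$ with the product $\Qcalb(\alpha_{\afrak, r}) \times \Qcalb(\infty, 0)$; and Propositions \ref{intertop} and \ref{geodinter} together give $\langle C_n, C_{\afrak, r}\rangle_{M_K} = \langle \Qcalb(0, \infty), T_n \Qcalb(\alpha_{\afrak, r})\rangle_{Y_0(p)}$. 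Performing the analogous calculation using the basis $\{\epsilon_{-r}, 1\}$ and summing over $[\afrak] \in \Cl(F)^+$ weighted by $\psi(\afrak)$, the two contributions assemble into the full cycle $\Qcalb(\psi)$; the orientation relation $\Qcalb(\infty, 0) = -\Qcalb(0, \infty)$ combined with the prefactor $4$ produces the coefficient $-4$ of the corollary.

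For $p$ inert, there is no factorization $p = \p \p^\sigma$ and no geodesics $\Qcalb(\alpha_{\afrak, \pm r})$ to produce the Hecke intersection; the support condition on $\varphi^{(p)}_\fin$ at the inert prime $\p$ makes the positive Fourier coefficients vanish, and the constant term $L^{(p)}(\psi, 0)$ also vanishes (for example because $E(\tau, \tau, \psi) \equiv 0$ as a weight-two modular form on $\SL_2(\Z)$, which forces $L(\psi, 0) = 0$). The main technical obstacle will be the careful matching of orientation conventions and the factor of $2$ arising from summing over $\pm r$, along with the fact that the Schwartz functions $\varphi^{(p)}_\fin \in \Scal(F^2_\A)$ and $\id_{\widehat{\Delta}_0(p)} \in \Scal(\Mat_2(\A))$, while related via the basis-dependent identification, are not equal at $p$ — so one must verify that their contributions to the theta lift agree in the required way.
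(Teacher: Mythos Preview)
Your plan has a genuine gap in the split case, and the inert case is not handled correctly.

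\textbf{Split case.} You propose to apply Theorem~\ref{fouriercoeffs} directly with $\varphi_\fin=\varphi^{(p)}_\fin$ and then identify the resulting intersection numbers with Hecke intersections via Lemma~\ref{latticeisom} and Proposition~\ref{geodinter}. But Lemma~\ref{latticeisom} says that $\widehat{\Delta}_0(p)$ corresponds to $L^{(\pm r)}$, \emph{not} to the support of $\varphi^{(p)}_\fin$; indeed at the places above $p$ one has $\varphi^{(p)}_\fin=\id_{\Ocal_\p^\times}\cdot\id_{\Ocal_{\p^\sigma}^\times}$ on the $x'$-coordinate, whereas $\id_{L^{(r)}}$ has $\id_{\m_\p}\cdot\id_{\Ocal_{\p^\sigma}^\times}$. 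So Proposition~\ref{geodinter} simply does not apply to $C_n(\varphi^{(p)}_\fin)$, and the ``technical obstacle'' you flag at the end is the actual crux of the proof. The paper resolves this by the identity
\[
\varphi^{(p)}_\fin+\id_{L^{(r)}}+\id_{L^{(-r)}}=\id_{\widehat{\dfrak}^{-1}\oplus\widehat{\Ocal}}-\id_{\widehat{\dfrak}^{-1}\oplus p\widehat{\Ocal}},
\]
observes (via Proposition~\ref{proptransfo}) that the right-hand side yields an Eisenstein series whose diagonal restriction vanishes, and then applies Theorem~\ref{fouriercoeffs} separately to $\id_{L^{(r)}}$ (with basis $\{\epsilon_r,1\}$) and to $\id_{L^{(-r)}}$ (with basis $\{\epsilon_{-r},1\}$). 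This is what produces the minus sign in $-4$: it comes from $E^{(p)}=-E(\cdot,\Fcal\id_{L^{(r)}},\cdot)-E(\cdot,\Fcal\id_{L^{(-r)}},\cdot)$, not from an orientation reversal $\Qcalb(\infty,0)=-\Qcalb(0,\infty)$ as you suggest. Your idea of applying the theorem once with each basis to the \emph{same} $\varphi^{(p)}_\fin$ would produce the same Eisenstein series twice, not two complementary pieces.

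\textbf{Inert case.} Your assertion that ``the support condition on $\varphi^{(p)}_\fin$ at the inert prime makes the positive Fourier coefficients vanish'' is unsubstantiated, and the argument that $E(\tau,\tau,\psi)\equiv 0$ forces $L(\psi,0)=0$ is not correct as stated (Theorem~\ref{fouriercoeffs} does not even apply to $\id_{\widehat{\dfrak}^{-1}\oplus\widehat{\Ocal}}$, since neither $\varphi_1$ nor $\varphi_2$ vanishes). The paper instead writes $\phi^{(p)}_\fin=d_F^{1/2}(\id_{\widehat{\Ocal}^2}-\id_{\widehat{\Ocal}\oplus p\widehat{\Ocal}})$, uses Proposition~\ref{proptransfo} to get $E(\tau_1,\tau_2,\id_{\widehat{\Ocal}\oplus p\widehat{\Ocal}},\psi)=p^2E(p\tau_1,p\tau_2,\id_{\widehat{\Ocal}^2},\psi)$, and concludes $E^{(p)}(\tau,\tau,\psi)=d_F^{1/2}\bigl(E(\tau,\tau,\psi)-p^2E(p\tau,p\tau,\psi)\bigr)=0$.

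Your computation of the constant term $\zeta_\fin(\varphi^{(p)}_2,\psi,0)=L^{(p)}(\psi,0)$ is correct and matches the paper.
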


\begin{proof}
We can rewrite the Schwartz functions as
\begin{align} \label{schwartzsum2}
    \varphi_\fin^{(p)}=
    \begin{cases}
    \id_{ \widehat{\dfrak}^{-1} \oplus \widehat{\Ocal} }-\id_{  \widehat{\dfrak}^{-1}  \oplus \widehat{\p} }-\id_{  \widehat{\dfrak}^{-1} \oplus \widehat{\p}^\sigma}+\id_{\widehat{\dfrak}^{-1} \oplus p\widehat{\Ocal} } & \textrm{if $p= \p{\p^\sigma }$} \\
    \id_{ \widehat{\dfrak}^{-1} \oplus \widehat{\Ocal} }-\id_{ \widehat{\dfrak}^{-1} \oplus  p\widehat{\Ocal} } & \textrm{if $p$ is inert},
    \end{cases}
\end{align}
\begin{align} \label{schwartzsum}
    \phi_\fin^{(p)}= d_F^\frac{1}{2}
    \begin{cases} \id_{\widehat{\Ocal}^2}-\id_{\widehat{\Ocal} \oplus \widehat{\p}}-\id_{\widehat{\Ocal} \oplus \widehat{\p}^\sigma }+\id_{\widehat{\Ocal} \oplus p\widehat{\Ocal}} & \textrm{if $p= {\p \p^\sigma}$} \\ \id_{\widehat{\Ocal}^2}-\id_{\widehat{\Ocal} \oplus p\widehat{\Ocal}} & \textrm{if $p$ is inert}.
    \end{cases}
\end{align}
First note that for $\gamma=\begin{pmatrix} 1 & 0 \\ 0 & p \end{pmatrix}$ we have $\omega_l(\gamma)\id_{\widehat{\Ocal}^2}=p^{\frac{1}{2}}\id_{\widehat{\Ocal} \oplus p\widehat{\Ocal}}.$ By applying the transformation in Proposition \ref{proptransfo} we find that
\begin{align}
    E(\tau_1,\tau_2, \id_{\widehat{\Ocal} \oplus p\widehat{\Ocal}},\psi)=p^2E\left (p\tau_1,p\tau_2, \id_{\widehat{\Ocal}^2},\psi \right).
\end{align}
Hence if $p$ is inert, we have
\begin{align}
E^{(p)}(\tau,\tau, \psi)=d_F^\frac{1}{2} \left ( E(\tau,\tau,\psi)-p^2E\left (p\tau,p\tau,\psi \right) \right )=0,
\end{align}
since $E(\tau,\tau)$ is zero. 

Suppose that $p$ is split. We have $\id_{L^{(r)}}=\id_{\widehat{\dfrak}^{-1} \oplus \widehat{\p} }-\id_{ \widehat{\dfrak}^{-1}\oplus p\widehat{\Ocal} }$ and $\id_{L^{(-r)}}=\id_{\widehat{\dfrak}^{-1} \oplus \widehat{\p^\sigma} }-\id_{ \widehat{\dfrak}^{-1}\oplus p\widehat{\Ocal} }$, hence
\begin{align}
    \varphi^{(p)}_\fin+\id_{L^{(r)}}+\id_{L^{(-r)}}& =\id_{\widehat{\dfrak}^{-1} \oplus \widehat{\Ocal} }-\id_{\widehat{\dfrak}^{-1} \oplus p\widehat{\Ocal}}, \nonumber \\
    \phi^{(p)}_\fin+\Fcal\id_{L^{(r)}}+\Fcal\id_{L^{(-r)}}& =d_F^\frac{1}{2} \left ( \id_{\widehat{\Ocal}^2 }-\id_{\widehat{\Ocal} \oplus p\widehat{\Ocal}} \right ).
\end{align}
This implies
\begin{align}
& E^{(p)}(\tau_1,\tau_2,\psi) = - E(\tau_1,\tau_2, \Fcal{\id}_{L^{(r)}},\psi)-E(\tau_1,\tau_2, \Fcal{\id}_{L^{(-r)}},\psi),
\end{align}
which also means that for the respective constant terms we have
\begin{align}
    c_0(\varphi^{(p)})=-c_0(\id_{L^{(r)}})-c_0(\id_{L^{(-r)}}).
\end{align}
Note that ${\id}_{L^{(r)}}$ vanishes on $l_1$. By Theorem \ref{fouriercoeffs} and the isomorphism between $L^{(r)}$ and $\widehat{\Delta}_0(p)$ in Lemma \ref{latticeisom}, we have
\begin{align}
    E(\tau,\tau, \Fcal{\id}_{L^{(r)}},\psi) = c_0(\id_{L^{( r)}}) + 2 \sum_{n=1}^\8 \left ( \int_{C_{r} \otimes \psi} \Theta_n(v,\id_{\widehat{\Delta}_0(p)}) \right )  e^{2i\pi n \tau}
\end{align}
and similarly for $-r$. Putting the two together we get
\begin{align}
E^{(p)}(\tau,\tau,\psi)& = c_0(\varphi^{(p)})- 2\sum_{n=1}^\8 \left ( \int_{C_{r} \otimes \psi+C_{-r} \otimes \psi} \Theta_n(v,\id_{\widehat{\Delta}_0(p)}) \right ) e^{2i\pi n \tau}.
\end{align}
Since $C_{r} \otimes \psi+C_{-r} \otimes \psi=\Qcalb(\psi) \times \Qcalb(\8,0)$ it follows from Proposition \ref{geodinter} that
\begin{align}
E^{(p)}(\tau,\tau,\psi)& = c_0(\varphi^{(p)})- 4\sum_{n=1}^\8 \bigl < \Qcalb(0,\8), T_n\Qcalb(\psi) \bigr >_{Y_0(p)} e^{2i\pi n \tau}.
\end{align}

\noindent It remains to compute $c_0(\varphi^{(p)})=\zeta_\fin(\varphi^{(p)}_1,\psi^{-1},0)+\zeta_\fin(\varphi^{(p)}_2,\psi,0)$. The Schwartz function $\varphi_\fin^{(p)}$ vanishes on $l_1$ hence the first singular term of $c_0(\varphi^{(p)})$ is zero and we have $c_0(\varphi^{(p)}) = \zeta_\fin(\varphi_2^{(p)},\psi,0)
$, where 
\begin{align}
\varphi^{(p)}_{2,w}(x')=
\begin{cases} \id_{\Ocal_w^\times}(x') & \textrm{if} \, w(p)>0 \\
\id_{\Ocal_w}(x') & \textrm{if} \, w(p)=0.
\end{cases}
\end{align}
By the computation in \eqref{zeta2} this shows that
\begin{align}
\zeta_w(\varphi_2^{(p)},\psi,s)= 
\begin{cases}
1 & \textrm{if} \, w(p)>0, \\
L_{w}(\psi,s) & \textrm{if} \, w(p)=0.
\end{cases}
\end{align}
hence $\zeta_\fin(\varphi_2^{(p)},\psi,0)=L^{(p)}(\psi,0)$.
\end{proof}

\begin{rmk} \label{factor2}
Our formula in Corollary \ref{cordpv} differs by a factor of $2$ from \cite[theorem.~A]{DPV}: the factor $4$ that we obtain in front of the positive Fourier coefficients is a factor $2$ in {\em loc. cit}. We already mentionned that this is due to the absence of the factor $\kappa$ in {\em loc.cit.} but let us make this more precise. 
The difference comes from the different definitions of intersection numbers of geodesic. Let $\Qcalb$ be the (compact) image of the geodesic $\Qcal$ in $Y_0(p)$. The subgroup of $\Gamma$ stabilizing $\Qcal$ is $    \Gamma[\Qcal]=\{ \pm 1 \} {\times \gamma_\Qcal}^\Z
$ for some $\gamma_\Qcal \in \Gamma$. For some $\tau \in \Qcal$ let $[\tau,\gamma_\Qcal \tau)$ be the (half-open) geodesic segment in $\HH$ between $\tau$ and $\gamma_{\Qcal} \tau$. In our case  - see \eqref{intersec3} - the definition of intersection number between $\Qcalb(0,\8)$ and $\Qcalb$ that we use is 
\begin{align} \label{intersection1}
    \bigl < \Qcalb(0,\8), \Qcalb \bigr >_{Y_0(p)} = \sum_{\gamma \in P\Gamma} \bigl < [\gamma \tau,\gamma \gamma_{\Qcal} \tau),\Qcal \bigr >_\HH,
\end{align}
where $P\Gamma$ is the image of $\Gamma$ in $\PSL_2(\Z)$. On the other hand, in \cite{DPV} the intersection numbers are defined by
\begin{align}\label{intersection2}
    \bigl < \Qcalb(0,\8), \Qcalb \bigr >_{Y_0(p)} = \sum_{\gamma \in \Gamma} \bigl < \Qcal(0,\8), [\gamma \tau,\gamma \gamma_{\Qcal} \tau) \bigr >_\HH
\end{align}
which is twice the number in \eqref{intersection1}.
\end{rmk}

\printbibliography
\end{document}